\newcounter{psFigures}     
\newcommand{\psfigure}[1]{
\ifnum \arabic{psFigures} = 0 \fi \ifnum \arabic{psFigures} = 1 #1
\fi }
\newcounter{Version}       
\ifnum \arabic{Version} = 0
\ifnum \arabic{Version} = 0
\newenvironment{pe}
{\color{orange}\begin{quote}\baselineskip=10pt\noindent
Pierre-Emmanuel: $\blacktriangleright$\ \small\sf}
{\hskip1em\hbox{}\nobreak\hfill$\blacktriangleleft$\par
\end{quote}}
\newenvironment{thomas}
{\color{blue}\begin{quote}\baselineskip=10pt\noindent Thomas:
$\blacktriangleright$\ \small\sf}
{\hskip1em\hbox{}\nobreak\hfill$\blacktriangleleft$\par
\end{quote}}
\ifnum \arabic{Version} = 1 
\numberwithin{equation}{section}
\newtheorem{theorem}{Theorem}[section]
\newtheorem{lemma}[theorem]{Lemma}
\newtheorem{proposition}[theorem]{Proposition}
\newtheorem{corollary}[theorem]{Corollary}
\theoremstyle{definition}
\newtheorem{definition}[theorem]{Definition}
\newtheorem{remark}[theorem]{Remark}
\newtheorem{notation}[theorem]{Notation}
\newtheorem{example}[theorem]{Example}
\newcommand{\K}{\mathbb{K}}
\newcommand{\Q}{\mathbb{Q}}
\newcommand{\Z}{\mathbb{Z}}
\newcommand{\cH}{{\mathcal{H}}}
\newcommand{\cM}{{\mathcal{M}}}
\newcommand{\cP}{{\mathcal{P}}}
\newcommand{\gl}{{\mathfrak{gl}}}
\newcommand{\fgg}{{\mathfrak{g}}}
\newcommand{\fg}{{\mathfrak{g}}}
\newcommand{\fl}{\mathfrak{l}}
\newcommand{\ft}{\mathfrak{t}}
\newcommand{\fz}{\mathfrak{z}}
\newcommand{\fsl}{\mathfrak{sl}}
\newcommand{\fs}{\mathfrak{s}}
\newcommand{\SL}{\mathrm{SL}}
\newcommand{\lW}[1]{{}^{#1}W}
\newcommand{\eW}{1}
\newcommand{\leqD}{\leq_\mathcal{D}}
\def\cB{{\mathcal{B}}}
\def\cP{{\mathcal{P}}}
\newcommand{\rootfour}[4]{#1 \alpha_1 + #2 \alpha_2 + #3 \alpha_3 + #4 \alpha_4}
\newcommand\leqdot{\mathrel{\ooalign{$\leq$\cr
  \hidewidth\raise0.225ex\hbox{$\cdot\mkern0.5mu$}\cr}}}
\newcommand{\cO}{{\mathcal O}}
\newcommand{\scal}[1]{\langle #1 \rangle}
\begin{document}

\title[Structure of certain spherical quotients]{Parametrization, structure and Bruhat order of certain spherical quotients}

\author[P.-E. Chaput]{Pierre-Emmanuel Chaput}
\address{Universit\'e de Lorraine, CNRS, Institut \'Elie Cartan de Lorraine, UMR 7502, Vandoeu\-vre-l\`es-Nancy, F-54506, France}
\email{pierre-emmanuel.chaput@univ-lorraine.fr}
\author[L. Fresse]{Lucas Fresse}
\address{Universit\'e de Lorraine, CNRS, Institut \'Elie Cartan de Lorraine, UMR 7502, Vandoeu\-vre-l\`es-Nancy, F-54506, France}
\email{lucas.fresse@univ-lorraine.fr}
\author[T. Gobet]{Thomas Gobet}
\address{Institut Denis Poisson, CNRS UMR 7350, Faculté des Sciences et Techniques, Université de Tours, Parc de Grandmont,
37200 TOURS, France} \email{thomas.gobet@lmpt.univ-tours.fr}
\thanks{The first two authors are supported in part by the ANR project GeoLie ANR-15-CE40-0012. The last author was partially supported by the same project and by the ARC project DP170101579.}

\date{\today}

\begin{abstract}
Let $G$ be a reductive algebraic group and let $Z$ be the stabilizer
of a nilpotent element $e$ of the Lie algebra of $G$. We consider
the action of $Z$ on the flag variety of $G$, and we focus on the
case where this action has a finite number of orbits (i.e., $Z$ is a
spherical subgroup). This holds for instance if $e$ has height $2$.
In this case we give a parametrization of the $Z$-orbits and we show
that each $Z$-orbit has a structure of algebraic affine bundle. In
particular, in type $A$, we deduce that each orbit has a natural
cell decomposition. In the aim to study the (strong) Bruhat order of
the orbits, we define an abstract partial order on certain quotients
associated to a Coxeter system. In type $A$, we show that the Bruhat
order of the $Z$-orbits can be described in this way.
\end{abstract}

\maketitle

\part*{Introduction}

Let $G$ be a connected reductive algebraic group over $\K$, where
$\K$ denotes an algebraically closed field of characteristic zero.
Let $B\subset G$ be a Borel subgroup. A closed subgroup $H\subset G$
is said to be spherical if the homogeneous space $G/H$ has a finite
number of $B$-orbits, equivalently if the flag variety
$\mathcal{B}:=G/B$ has a finite number of $H$-orbits.

Let $\mathfrak{g}$ be the Lie algebra of $G$. Let $e\in\mathfrak{g}$
be a nilpotent element. The following subgroups can be attached to
$e$: the stabilizer
\[Z_G(e):=\{g\in G:g\cdot e=e\}\]
and the normalizer
\[N_G(e):=\{g\in G:g\cdot(\K e)=\K e\}.\]
Our concern is the action of $B$ on the nilpotent orbit $G\cdot
e=G/Z_G(e)$ or equivalently the action of $Z_G(e)$ on $G/B$. We
focus on the case where this action comprises a finite number of
orbits, which means that the subgroup $Z_G(e)$ is spherical. In this
situation, our motivation is to understand
\begin{enumerate}
\item the parametrization and the structure of the $Z_G(e)$-orbits on $\mathcal{B}$;
\item the inclusion relations between the orbit closures.
\end{enumerate}
Beyond the fact that the $Z_G(e)$-orbits give an interesting
partition of the whole flag variety, the action of $Z_G(e)$
restricts to certain subvarieties of $\mathcal{B}$ (Springer
varieties, Hessenberg varieties) which arise in geometric
representation theory. Information on the structure and the topology
of the orbits may have applications in this direction.

The nilpotent elements $e$ such that $Z_G(e)$ is spherical are
classified by Panyushev \cite{Panyushev1,Panyushev2} as those of
height at most $3$. The problem of classifying the $Z_G(e)$-orbits
of $G/B$ has been considered in certain (mostly, classical) cases in
\cite{BP,Boos-et-al,BR}. The problem of describing the inclusion
relations between the orbit closures has been addressed in
\cite{BP,BR} in certain cases. To the best of our knowledge, there
is no general approach to these problems, although recently
\cite{gandini} associate to any nilpotent element of height $2$ an
involution in the affine Weyl group, and show that the orbit
closures are described by restricting the Bruhat order on the affine
Weyl group.

The paper is divided into three parts. In Part \ref{part1}, we
review the general background on nilpotent elements and nilpotent
orbits, including Panyushev's classification of spherical nilpotent
orbits. Then we focus on the structure of the group $Z_G(e)$ and
point out the following facts. In general, $Z_G(e)$ has a ``Levi
decomposition'' of the form
\[Z_G(e)=L_Z\ltimes U_Z\]
with reductive part $L_Z\subset L$ and unipotent part $U_Z\subset
U$, respectively contained in the Levi subgroup and the unipotent
radical of a suitable parabolic subgroup $P=L\ltimes U$ attached to
$e$. In the case where $Z_G(e)$ is spherical, it turns out that the
reductive part $L_Z$ coincides with the subgroup of fixed points of
an involution $\sigma\in \mathrm{Aut}(L)$ (possibly up to connected
components). This follows from Panyushev's classification but we
provide a direct argument, defining the involution as the nontrivial
element of the Weyl group of an $\mathrm{SL}_2$-subgroup associated
with the nilpotent element $e$ (see Proposition
\ref{P.symmetric-space}). Finally, in the more particular situation
where $e$ is a nilpotent element of height 2, the unipotent part
$U_Z$ coincides with the full unipotent radical $U$. Hence $Z_G(e)$
is obtained through parabolic induction from a symmetric subgroup of
$L$ in this case.

One of the main ingredients used for Proposition
\ref{P.symmetric-space} is a classification of spherical nilpotent
orbits in terms of sets of so-called rationally orthogonal roots
(see Proposition \ref{P.orbits}), which extends Panyushev's
classification in terms of orthogonal simple roots. The notion of
rationally orthogonal roots differs from those of orthogonal or
strongly orthogonal sets of roots, and this difference is thoroughly
discussed in Section \ref{SI.3}.

Another aspect considered in Part \ref{part1} is the comparison
between the $Z_G(e)$-orbits and $N_G(e)$-orbits of $\mathcal{B}$. We
show that under certain circumstances (including the case where
$Z_G(e)$ is spherical), both sets of orbits actually coincide
(Proposition \ref{PI.3.1-bis}). We however point out an example
which shows that the subgroup $N_G(e)$ may be spherical whereas
$Z_G(e)$ is not. We believe that the comparison of these two sets of
orbits may be a problem of independent interest.

In Part \ref{part2}, we focus on a spherical subgroup of the form
\[H=M\ltimes U\]
where $P=L\ltimes U$ is the Levi decomposition of a parabolic
subgroup and $M\subset L$ is a spherical subgroup. In particular,
the role of $H$ can be played by the stabilizer $Z_G(e)$ of a
nilpotent element of height 2. For $H$ as above, we show that the
$H$-orbits of $\mathcal{B}$ are naturally parametrized by the set
\[W^P\times (\mathcal{B}_L/M)\]
where $W^P$ is the Weyl group parabolic quotient associated to $P$
(i.e., the set of representatives of minimal length of the quotient
$W/W_P$) and $\mathcal{B}_L$ is the flag variety of $L$. Moreover,
in our main result (Theorem \ref{T4.1}) we prove that each orbit has
a structure of algebraic affine bundle over an $M$-orbit of
$\mathcal{B}_L$. In type $A$, we deduce that each $Z_G(e)$-orbit has
a natural cell decomposition (Example \ref{E7.3}).

In Part \ref{part3}, we focus on the (strong) Bruhat order of the
$Z_G(e)$-orbits. We introduce a combinatorial order which reflects
the geometric situation described above. Specifically, given a
Coxeter system $(W,S)$, we consider a parabolic subgroup $W_L\subset
W$ equipped with an involution $\theta:W_L\to W_L$. Then, we
introduce a partial order on the quotient $W/W_L^\theta$, where
$W_L^\theta$ stands for the subgroup of fixed points of $\theta$. We
mostly address the situation where $W_L^\theta$ is a diagonal
subgroup of $W_L$. We investigate certain properties of this order
(minimal representatives, cover relations).

In type $A_{n-1}$, for a nilpotent element $e$ of height $2$, the
$Z_G(e)$-orbits of the flag variety $\mathcal{B}$ are parametrized
by a quotient of the above-mentioned form, namely
$\mathfrak{S}_n/(\Delta\mathfrak{S}_r\times\mathfrak{S}_{n-2r})$,
where $\Delta\mathfrak{S}_r$ stands for the diagonal embedding of
$\mathfrak{S}_r$ into $\mathfrak{S}_r\times\mathfrak{S}_r$. Then,
translating the results of \cite{BP,BR} into our framework, we show
that our combinatorial order coincides with the Bruhat order of the
$Z_G(e)$-orbits.

\bigskip
\noindent {\bf Acknowledgement:}{ we thank Jacopo Gandini for valuable comments which helped improving this paper.}

\setcounter{tocdepth}{1} \tableofcontents

\newpage

\part{Structure of the isotropy group of a spherical nilpotent orbit}

\label{part1}

Throughout this part of the paper, we fix a nilpotent element
$e\in\mathfrak{g}$ and our aim is to describe the structure of its
stabilizer
\[Z:=Z_G(e)=\{g\in G:\mathrm{Ad}(g)e=e\}.\]
We are mostly concerned with the case where the corresponding orbit
$\mathcal{O}_e:=G\cdot e$ is a spherical variety, i.e., it consists
of a finite number of $B$-orbits. Equivalently this means that $Z$
is a spherical subgroup, i.e., the flag variety $\mathcal{B}$ has a
finite number of $Z$-orbits.

In Section \ref{SI.1}, we introduce the basic ingredients which are
useful for describing the structure of $Z$: namely, we recall the
notions of standard triple, cocharacter $\tau$ and parabolic
subgroup $P$ associated to $e$. In particular we recall that the
stabilizer has a ``Levi decomposition''
\[Z=L_Z\ltimes U_Z.\]
In general the subgroup $L_Z$ is not connected. Note that in general
the group $Z$ may not be connected even in the case where the orbit
$\mathcal{O}_e$ is spherical.

The nilpotent orbits which are spherical are classified in
\cite{Panyushev1,Panyushev2} and this classification is recalled in
Proposition \ref{P4}. In particular, one characterization is that
every spherical nilpotent orbit has a representative obtained as sum
of root vectors corresponding to a set of pairwise orthogonal simple
roots; see Proposition \ref{P4}\,{\rm (iii)}. In Proposition
\ref{P.orbits}, we provide a more general classification in terms of
sets of so-called rationally orthogonal (not necessarily simple)
roots, which is the form that we will need in Proposition
\ref{P.symmetric-space} to show that a spherical nilpotent orbit
defines a symmetric space. In Proposition
\ref{P.orthogonal.spherical}, we give a somewhat more precise result
where we characterize the sets of orthogonal roots (not necessarily
simple nor rationally orthogonal) for which a sum of root vectors
belongs to a spherical nilpotent orbit. In Example
\ref{exam:nil-elements}, we also point out that in various cases,
the sets of orthogonal roots corresponding to spherical nilpotent
orbits can be obtained by chain cascade of roots.

In the case where the orbit $\mathcal{O}_e$ is spherical, we obtain
the following description of the group $Z$. In Section \ref{SI.4} we
show that the ``Levi subgroup'' $L_Z$ of $Z$ is a symmetric subgroup
of a Levi subgroup $L\subset P$ (possibly up to certain connected
components). This fact is already known from \cite[Proposition
3.3]{Panyushev1} (at the level of the Lie algebras), but our proof
is somewhat different.\ In particular we give an explicit
construction of an involution $\sigma\in\mathrm{Aut}(L)$ such that
$L_Z$ coincides with the subgroup $L^\sigma$ of fixed points of
$\sigma$ (up to connected components of $L^\sigma$).

In Section \ref{SI.5}, considering the special case where $e$ is a
nilpotent element of height 2, we point out that $Z$ contains the
unipotent radical of $P$. Thus $Z$ is obtained by parabolic
induction from a symmetric subgroup of a Levi factor of $P$.

We believe that Section \ref{SI.2} is of independent interest. In
that section, we compare the $Z$-orbits on $\mathcal{B}$ with the
orbits of the normalizer $N:=N_G(e)=\{g\in G:\mathrm{Ad}(g)e\in\K
e\}$. In particular we show that in the case where $\mathcal{O}_e$
is spherical, both sets of orbits coincide, whereas this is not the
case in general.

\section{Parabolic subgroup associated to a nilpotent element}

\label{SI.1}

By the Jacobson--Morozov lemma, every nilpotent element
$e\in\mathfrak{g}$ is member of a {\it standard triple}, i.e., there
exist $h,f\in\mathfrak{g}$ such that
\[[h,e]=2e,\quad [h,f]=-2f,\quad [e,f]=h,\]
so that $\mathrm{Span}\{e,h,f\}\subset\mathfrak{g}$ is a subalgebra
isomorphic to $\mathfrak{sl}_2(\K)$.

The semisimple element $h$ gives rise to a $\mathbb{Z}$-grading
\[\mathfrak{g}=\bigoplus_{i\in\mathbb{Z}}\mathfrak{g}(i)\quad\mbox{where}\quad \mathfrak{g}(i)=\{x\in\mathfrak{g}:[h,x]=ix\}.\]
The nonnegative part of the grading $\mathfrak{p}:=\bigoplus_{i\geq
0}\mathfrak{g}(i)$ is a parabolic subalgebra, the zero part of the
grading $\mathfrak{l}:=\mathfrak{g}(0)=\mathfrak{z}_\mathfrak{g}(h)$
is a Levi subalgebra of $\mathfrak{p}$, and the positive part of the
grading $\mathfrak{u}:=\bigoplus_{i>0}\mathfrak{g}(i)$ is the
nilpotent radical of $\mathfrak{p}$. Correspondingly the grading
yields a parabolic subgroup $P$ and a Levi decomposition
\[P=L\ltimes U\]
such that $\mathfrak{p}=\mathrm{Lie}(P)$,
$\mathfrak{l}=\mathrm{Lie}(L)$, and $\mathfrak{u}=\mathrm{Lie}(U)$.

By the representation theory of $\mathfrak{sl}_2(\K)$, we have the
inclusion
\[\mathfrak{z}_\mathfrak{g}(e)=\mathrm{Lie}(Z)\subset \bigoplus_{i\geq 0}\mathfrak{g}(i)=\mathfrak{p}\]
and the dimension formula
\[\dim\mathfrak{z}_\mathfrak{g}(e)=\dim \mathfrak{g}(0)+\dim \mathfrak{g}(1).\]

Moreover, there is a (unique) cocharacter $\tau:\K^*\to G$  such
that $\tau'(1)=h$. This implies that the parabolic subgroup
$P\subset G$ and its Levi decomposition $P=L\ltimes U$ can also be
characterized as follows:
\begin{eqnarray*}
 & \displaystyle P=\{g\in G:\lim_{t\to 0}\tau(t)g\tau(t)^{-1}\mbox{ exists}\}, \\
 & \displaystyle L=\{g\in G:\forall t\in\K^*,\ \tau(t)g\tau(t)^{-1}=g\}=Z_G(\tau), \\
 & \displaystyle U=\{g\in G:\lim_{t\to 0}\tau(t)g\tau(t)^{-1}=1_G\}.
\end{eqnarray*}

\begin{proposition}
\label{P-Levi-Z}
\begin{itemize}
\item[\rm (a)] We have $Z\subset P$. Letting $U_Z:=U\cap Z$ and $L_Z:=L\cap Z$, we have
\[Z=L_Z\ltimes U_Z.\]
Moreover, the subgroup $U_Z$ is connected.
\item[\rm (b)] Considering the connected subgroup $Z^0\subset Z$, we have $L\cap (Z^0)=(L_Z)^0=:L_Z^0$,
$U\cap (Z^0)=U_Z$, and
\[Z^0=L_Z^0\ltimes U_Z.\]
\item[\rm (c)] If $S$ is a maximal torus of $L_Z^0$, then $S$ is a maximal torus of $Z$.
\end{itemize}
\end{proposition}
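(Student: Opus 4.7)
The plan is to prove (a) via Kostant's uniqueness theorem for $\mathfrak{sl}_2$-triples completing a fixed nilpositive element $e$, then to derive (b) and (c) formally from the resulting semidirect product structure.

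For (a), given $g\in Z$, I note that $(e,\mathrm{Ad}(g)h,\mathrm{Ad}(g)f)$ is another $\mathfrak{sl}_2$-triple with nilpositive element $e$. By Kostant's theorem, there exists (a unique) $u\in U\cap Z$ with $u\cdot(e,h,f)=(e,\mathrm{Ad}(g)h,\mathrm{Ad}(g)f)$. Then $u^{-1}g$ fixes the whole triple $(e,h,f)$; in particular $u^{-1}g\in Z_G(h)=L$, so $u^{-1}g\in L\cap Z=L_Z$. This shows at once that $Z\subset P$ and $Z=U_Z\cdot L_Z$. Combining with $L_Z\cap U_Z\subset L\cap U=\{1_G\}$ and the fact that $L_Z\subset L$ normalizes $U_Z=Z\cap U$ (because $L$ normalizes $U$), one concludes $Z=L_Z\ltimes U_Z$.

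The main obstacle is the connectedness of $U_Z$. I plan to establish it by filtering $U=U_1\supset U_2\supset\cdots$ by the normal subgroups $U_i=\exp\bigl(\bigoplus_{j\geq i}\mathfrak{g}(j)\bigr)$ and showing that the successive quotients $(U_Z\cap U_i)/(U_Z\cap U_{i+1})$ identify with the vector groups $\mathfrak{g}(i)\cap\mathfrak{z}_\mathfrak{g}(e)$. The key computation: for $u=\exp(X)\in U_i$ with $X=X_i+X_{i+1}+\cdots$ and $X_k\in\mathfrak{g}(k)$, the lowest-degree component of $\mathrm{Ad}(u)e-e$ equals $[X_i,e]\in\mathfrak{g}(i+2)$, which must vanish, forcing $X_i\in\mathfrak{g}(i)\cap\mathfrak{z}_\mathfrak{g}(e)$. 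Iterating downward from $U_Z\cap U_N=\{1_G\}$ (for $N$ beyond the top degree of the grading) yields the connectedness of each $U_Z\cap U_i$, and hence of $U_Z$.

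Assertion (b) will then follow formally: $U_Z$ being connected is contained in $Z^0$, and the quotient map $Z\to Z/U_Z\cong L_Z$ identifies $Z^0$ with $L_Z^0\ltimes U_Z$ (surjectivity onto $L_Z^0$ comes from $L_Z^0\subset Z$ being connected, hence contained in $Z^0$; the kernel is $U_Z\cap Z^0=U_Z$). The equalities $L\cap Z^0=L_Z^0$ and $U\cap Z^0=U_Z$ are immediate consequences via $L\cap U=\{1_G\}$. For (c), I would enlarge a maximal torus $S$ of $L_Z^0$ to a maximal torus $T$ of $Z^0$ containing $S$; the projection $\pi:Z^0\to L_Z^0$ is injective on $T$ since $T\cap U_Z\subset T\cap U=\{1_G\}$, and $\pi(T)$ is a torus of $L_Z^0$ containing $\pi(S)=S$. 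Maximality of $S$ in $L_Z^0$ forces $\pi(T)=S$, hence $T=S$, so $S$ is maximal in $Z^0$; since every torus of $Z$ is connected and thus lies in $Z^0$, $S$ is also maximal in $Z$.
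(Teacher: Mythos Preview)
Your proof is correct and follows the paper's approach almost exactly for the Levi decomposition in (a) via Kostant's theorem, and for parts (b) and (c).

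The one genuine difference is your argument for the connectedness of $U_Z$. Your filtration argument by the central series $U_i=\exp\bigl(\bigoplus_{j\geq i}\mathfrak{g}(j)\bigr)$ is valid: the map $U_Z\cap U_i\to\mathfrak{g}(i)\cap\mathfrak{z}_\mathfrak{g}(e)$ is surjective (take $\exp(Y)$ for $Y$ in the target) with kernel $U_Z\cap U_{i+1}$, so the successive quotients are connected vector groups and descending induction from the trivial group gives connectedness. The paper instead uses the cocharacter $\tau$ directly: for $u\in U_Z$ one checks that $\tau(t)u\tau(t)^{-1}\in U_Z$ for all $t\in\K^*$ (since $\tau(t)$ rescales $e$), and since $\lim_{t\to 0}\tau(t)u\tau(t)^{-1}=1_G$ by the very definition of $U$, the closure of this one-parameter family connects $u$ to $1_G$ inside $U_Z$. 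This is shorter and avoids any inductive bookkeeping; your filtration argument, on the other hand, yields as a byproduct the finer information $\dim U_Z=\sum_{i\geq 1}\dim\bigl(\mathfrak{g}(i)\cap\mathfrak{z}_\mathfrak{g}(e)\bigr)$, though that is not needed here.

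One small remark on (c): when you write $T\cap U_Z\subset T\cap U=\{1_G\}$, note that $T$ is a priori only a torus of $Z^0$, not of $L$, so the reason $T\cap U=\{1_G\}$ is not that $L\cap U=\{1_G\}$ but rather that elements of $T$ are semisimple while elements of $U$ are unipotent (this is also how the paper phrases it).
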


\begin{proof}
{\rm (a)} Let $g\in Z$. Then $(e,g\cdot h,g\cdot f)$ is a standard
triple which also contains the element $e$. By Kostant's theorem
\cite[Theorem 3.4.10]{Collingwood-McGovern}, there is an element
$u\in U\cap Z$ such that $g\cdot h=u\cdot h$ and $g\cdot f=u\cdot
f$. In particular $u^{-1}g\cdot h=h$, which means that
$u^{-1}g=:\ell\in L\cap Z$. Whence $g=u\ell\in P$. This argument
shows in fact that the inclusion $Z\subset L_ZU_Z$ holds. Since
$L_Z\cap U_Z=\{1_G\}$, we conclude that $Z=L_Z\ltimes U_Z$.

Letting $u\in U_Z$, we have for all $t\in\K^*$
\[\mathrm{Ad}(\tau(t)u\tau(t)^{-1})e=t^{-2}\mathrm{Ad}(\tau(t)u)e=t^{-2}\mathrm{Ad}(\tau(t))e=e\]
hence $\{\tau(t)u\tau(t)^{-1}:t\in\K^*\}\subset U_Z$.\ Since
$\lim_{t\to 0}\tau(t)u\tau(t)^{-1}=1_G$, we conclude that $u\in
(U_Z)^0$.\ This shows that $U_Z$ is connected.

{\rm (b)} Since $U_Z$ is connected, we have $U_Z\subset Z^0$, hence
$U_Z=U\cap(Z^0)$. By {\rm (a)}, we get the equality $Z^0=(L\cap
(Z^0))\ltimes U_Z$, which also implies that $L\cap (Z^0)$ is
connected, i.e., $L\cap (Z^0)=(L_Z)^0$.

{\rm (c)} Let $S$ be a maximal torus of $L_Z^0$ and let $T$ be a
maximal torus of $Z$ such that $S\subset T$. Thus $T\subset Z^0$. In
view of {\rm (b)}, there is a surjective morphism of algebraic
groups $\pi:Z^0\to L_Z^0=Z^0/U_Z$. Since $S$ is a maximal torus of
$L_Z^0$, we deduce that $\pi(T)=S$. On the other hand, since every
element of $U_Z$ is unipotent, $T\cap U_Z$ must be trivial.
Therefore, the equality $S=T$ must hold.
\end{proof}

\section{Relation between normalizer and stabilizer}

\label{SI.2}

The normalizer of $e\in\mathfrak{g}$ is the subgroup
\[N:=N_G(e)=\{g\in G:\mathrm{Ad}(g)e\in\K e\}.\]
We have the following relation between $Z$ and $N$; here $\tau$ is
the cocharacter associated to $e$ as in Section \ref{SI.1}.

\begin{proposition}
\label{PI.3.1} $Z$ is a normal subgroup of $N$. We have
$N=Z\{\tau(t)\}_{t\in\K^*}$ and $Z\cap\{\tau(t)\}_{t\in\K^*}$
contains (at most) two elements.
\end{proposition}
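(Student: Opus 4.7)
The plan is to exploit one single identity: that the cocharacter $\tau$ acts on $e$ by the square character. More precisely, since $\tau$ is the cocharacter with $\tau'(1)=h$ and $[h,e]=2e$, differentiating gives $\mathrm{Ad}(\tau(t))e=t^2e$ for every $t\in\K^\ast$. In particular $\tau(\K^\ast)\subset N$, so the inclusion $Z\tau(\K^\ast)\subset N$ comes for free.

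For the normality of $Z$ in $N$, I would take $g\in N$ with $\mathrm{Ad}(g)e=\lambda e$ for some $\lambda\in\K^\ast$, and $z\in Z$, and compute
\[\mathrm{Ad}(gzg^{-1})e=\mathrm{Ad}(g)\mathrm{Ad}(z)(\lambda^{-1}e)=\lambda^{-1}\mathrm{Ad}(g)e=e,\]
so that $gzg^{-1}\in Z$. The reverse decomposition $N\subset Z\tau(\K^\ast)$ follows by a similar rescaling trick: given $g\in N$ with $\mathrm{Ad}(g)e=\lambda e$, algebraic closedness of $\K$ provides some $t\in\K^\ast$ with $t^2=\lambda$; then $\mathrm{Ad}(g\tau(t)^{-1})e=\lambda^{-1}\mathrm{Ad}(g)e=e$, so $g\tau(t)^{-1}\in Z$ and $g\in Z\tau(t)$.

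For the last statement, the same identity shows that $\tau(t)\in Z$ if and only if $t^2=1$, an equation with at most two roots in $\K^\ast$; this bounds $|Z\cap\tau(\K^\ast)|$ by $2$.

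I do not foresee any serious obstacle: the proof is driven entirely by the scalar relation $\mathrm{Ad}(\tau(t))e=t^2e$, and the only hypothesis one must be careful to invoke is the algebraic closedness of $\K$ (needed to solve $t^2=\lambda$) and the fact that $\mathrm{char}\,\K=0$ (so that $t^2=1$ really has two distinct solutions). These are standing assumptions in the paper.
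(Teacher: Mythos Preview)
Your proof is correct and follows essentially the same route as the paper's own argument: both rely on the identity $\mathrm{Ad}(\tau(t))e=t^2e$, use algebraic closedness to write any scalar as a square, and then observe that $\tau(t)\in Z$ forces $t^2=1$. The paper is terser (declaring normality ``clear'' and writing $\mathrm{Ad}(g)e=t^2e$ directly rather than introducing $\lambda$), but there is no substantive difference.
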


\begin{proof}
The first claim is clear. For $g\in N$, we have
$\mathrm{Ad}(g)e=t^2e$ for some $t\in\K^*$, hence
$\mathrm{Ad}(g^{-1}\tau(t))e=e$, i.e., $g\in Z\tau(t)$. Finally, for
$t\in\K^*$, the equality $\mathrm{Ad}(\tau(t))e=t^2e$ implies that
$\tau(t)$ belongs to $Z$ if and only if $t\in\{-1,1\}$.
\end{proof}

\begin{remark}
If the nilpotent element $e$ is even (that is, if the grading of
Section \ref{SI.1} satisfies $\mathfrak{g}(i)=0$ for all odd $i$),
then $\tau(-1)=\tau(1)=1_G$. In this case, we have
$Z\cap\{\tau(t)\}_{t\in\K^*}=\{1_G\}$, so that
$N=Z\rtimes\{\tau(t)\}_{t\in\K^*}$.
\end{remark}

We now compare the actions of $N$ and $Z$ on the flag variety
$\mathcal{B}$. Evidently, each $Z$-orbit is contained in an
$N$-orbit, and this implies that $N$ is a spherical subgroup
whenever $Z$ is a spherical subgroup. There are situations where the
$N$-orbits and the $Z$-orbits of $\mathcal{B}$ actually coincide.

\begin{proposition}
\label{PI.3.1-bis}
\begin{itemize}
\item[\rm (a)]
If every $N$-orbit (or every $Z$-orbit) of $\mathcal{B}$ contains an
element fixed by $\tau$, then the $Z$-orbits of $\mathcal{B}$
coincide with the $N$-orbits. \item[\rm (b)] If $Z$ is a spherical
subgroup of $G$, then the $Z$-orbits of $\mathcal{B}$ coincide with
the $N$-orbits.
\end{itemize}
\end{proposition}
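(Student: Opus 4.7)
For part (a), the plan is to exploit the factorization $N = Z \cdot \{\tau(t)\}_{t \in \K^*}$ established in Proposition \ref{PI.3.1}. The key observation I would record first is that if $xB \in \mathcal{B}$ is fixed by $\tau$, then any $n = z\tau(t) \in N$ satisfies $n \cdot xB = z\tau(t) \cdot xB = z \cdot xB$, which lies in $Z \cdot xB$; together with $Z \subset N$, this yields $N \cdot xB = Z \cdot xB$. Under either form of the hypothesis---a $\tau$-fixed representative in every $N$-orbit, or in every $Z$-orbit---applying this identity to the chosen representatives immediately shows that each orbit in the relevant partition is simultaneously an $N$-orbit and a $Z$-orbit, so the two partitions of $\mathcal{B}$ coincide.

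For part (b), I would argue directly, bypassing any need to produce $\tau$-fixed points. Since $Z$ is normal in $N$ by Proposition \ref{PI.3.1}, the group $N$ acts on the set of $Z$-orbits of $\mathcal{B}$, and this set is finite by sphericity of $Z$. The subgroup $Z$ clearly acts trivially on this set, so the action factors through $N/Z$, which by Proposition \ref{PI.3.1} is a quotient of the connected (indeed irreducible) group $\tau(\K^*)$. Since a connected algebraic group cannot act nontrivially on a finite discrete set, each $Z$-orbit must be $N$-stable; combined with $Z \subset N$, this forces the $N$-orbit and the $Z$-orbit of every flag to agree.

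There is no serious obstacle: the crux of (b) is just the standard fact that a connected group acts trivially on any finite set, and (a) is essentially a one-line manipulation once $N = Z \cdot \tau(\K^*)$ is in hand. One could alternatively attempt to derive (b) from (a) by exhibiting a $\tau$-fixed point in each orbit---for instance via the limit $\lim_{t \to 0} \tau(t) xB$ in the projective variety $\mathcal{B}$---but such a limit typically sits on the boundary of the orbit rather than inside it, so a Bialynicki--Birula type argument would be required to keep the fixed point within the open orbit. The direct route above avoids this entirely, which is why I would present it as the main proof.
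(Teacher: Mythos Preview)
Your proof of part (a) is correct and matches the paper's argument essentially verbatim: pick a $\tau$-fixed representative and use $N=Z\cdot\tau(\K^*)$ to collapse the $N$-orbit to a $Z$-orbit.

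Your proof of part (b) is also correct, but it takes a different route from the paper. The paper fixes an $N$-orbit $\mathcal{N}=N\cdot gB$, computes directly that every $Z$-orbit $Z\cdot\tau(t)gB\subset\mathcal{N}$ has dimension $\dim Z-\dim(Z\cap gBg^{-1})$ independent of $t$ (using $\tau(t)^{-1}Z\tau(t)=Z$), deduces that each $Z$-orbit is closed in $\mathcal{N}$, and then observes that the irreducible curve $\K^*$ cannot be covered by finitely many disjoint nonempty closed subsets $\{t:\tau(t)gB\in\mathcal{Z}\}$. Your argument instead packages the situation group-theoretically: normality of $Z$ in $N$ gives an action of the connected quotient $N/Z$ on the finite set of $Z$-orbits, which must be trivial. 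Your version is shorter and more conceptual, and it makes transparent that the only ingredients needed are normality of $Z$, connectedness of $\tau(\K^*)$, and finiteness of the orbit set; the paper's dimension computation is in effect a concrete verification that the relevant stabilizers are closed of finite index. The one point worth making explicit in your write-up is why the stabilizer in $\K^*$ of a given $Z$-orbit is closed (it is the preimage of a locally closed subset under a morphism, hence constructible, and a constructible subgroup is closed); once that is said, the argument is airtight.
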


\begin{proof}
{\rm (a)} Assume that every $N$-orbit  of $\mathcal{B}$ contains an
element fixed by $\tau$ (this is more general than assuming that
every $Z$-orbit contains such an element). Then every $N$-orbit
takes the form $N\cdot gB$ with $\tau(t)gB=gB$ for all $t\in\K^*$.
In view of Proposition \ref{PI.3.1}, we get $N\cdot gB=Z\cdot gB$,
which shows that every $N$-orbit is actually a $Z$-orbit.

{\rm (b)} Assume that $Z$ is a spherical subgroup. Let
$\mathcal{N}:=N\cdot gB$ be an $N$-orbit of $\mathcal{B}$; in
particular $\mathcal{N}$ contains finitely many $Z$-orbits. We
consider the map
\[\phi:\K^*\to\mathcal{N},\ t\mapsto \phi(t):=\tau(t)gB.\]
Note that every $Z$-orbit of $\mathcal{N}$ is of the form
$Z\cdot\tau(t)gB$ for some $t\in\K^*$. We have
\begin{eqnarray*}
\dim Z\cdot\tau(t)gB & = & \dim Z-\dim\{h\in Z:h\tau(t)gB=\tau(t)gB\} \\
 & = & \dim Z-\dim \tau(t)^{-1}Z\tau(t)\cap gBg^{-1} \\
 & = & \dim Z-\dim Z\cap gBg^{-1}
\end{eqnarray*}
because $\tau(t)^{-1}Z\tau(t)=Z$.\ This shows that all the
$Z$-orbits contained in $\mathcal{N}$ have the same dimension. In
particular this implies that
\[\mathcal{Z}=\overline{\mathcal{Z}}\cap\mathcal{N}\quad\mbox{for every $Z$-orbit $\mathcal{Z}\subset\mathcal{N}$}.\]
Since the map $\phi$ is algebraic, we obtain
\[\{s\in\K^*:\phi(s)\in \mathcal{Z}\}\mbox{ is closed in $\K^*$}\quad\mbox{for every $Z$-orbit $\mathcal{Z}\subset\mathcal{N}$}.\]
Since $\mathcal{N}$ contains a finite number of $Z$-orbits, there is
a $Z$-orbit $\mathcal{Z}\subset\mathcal{N}$ such that
$\{s\in\K^*:\phi(s)\in\mathcal{Z}\}=\K^*$, which just means that
$\mathcal{Z}=\mathcal{N}$.
\end{proof}

\begin{remark}
{\rm (a)} In general, $Z$-orbits and $N$-orbits of $\mathcal{B}$ do
not coincide. It actually may happen that $N$ is spherical whereas
$Z$ is not, as shown by the following example. Let us consider the
situation where $\mathfrak{g}=\mathfrak{sl}_3(\mathbb{K})$ and
\[e=\left(\begin{array}{ccc}
0 & 1 & 0 \\ 0 & 0 & 1 \\ 0 & 0 & 0
\end{array}\right).\]
In this case
\begin{eqnarray*}
& Z=\left\{ \left(\begin{array}{ccc} a & b & c \\ 0 & a & b \\ 0 & 0
& a
\end{array}\right):a\in\K^*\mbox{ s.t. $a^3=1$};\ b,c\in\K
\right\},\\
& N=\{\tau(t)\}_{t\in\K^*}Z\quad\mbox{with}\quad\tau(t)=
\left(\begin{array}{ccc} t^2 & 0 & 0 \\ 0 & 1 & 0 \\ 0 & 0 & t^{-2}
\end{array}\right).
\end{eqnarray*}
Let $(\varepsilon_1,\varepsilon_2,\varepsilon_3)$ be the standard
basis of $\K^3$. Here the flag variety $\mathcal{B}$ can be viewed
as the set of complete flags of $\K^3$. Such a complete flag
consists of a pair $(F_1,F_2)$ with $\dim F_1=1$, $\dim F_2=2$,
$0\subset F_1\subset F_2\subset \K^3$. Given linearly independent
vectors $v_1,v_2$, we write $F(v_1,v_2):=(\langle v_1\rangle,\langle
v_1,v_2\rangle)$. It is easy to see that the elements
$F(\varepsilon_3,\varepsilon_1+t\varepsilon_2)$, for $t\in\K$,
belong to pairwise distinct $Z$-orbits of $\mathcal{B}$. Thus
$\mathcal{B}$ has infinitely many $Z$-orbits. However, $\mathcal{B}$
has exactly seven $N$-orbits, whose representatives are
\[
F(\varepsilon_1,\varepsilon_2),\ F(\varepsilon_1,\varepsilon_3),\
F(\varepsilon_2,\varepsilon_1),\ F(\varepsilon_2,\varepsilon_3),\
F(\varepsilon_3,\varepsilon_1),\ F(\varepsilon_3,\varepsilon_2),\
F(\varepsilon_3,\varepsilon_1+\varepsilon_2).
\]
Only the last one of these orbits does not contain any element fixed
by $\tau$.

{\rm (b)} In Remark \ref{R-no-tau-fixedpoint} (b) we point out an
example where $Z$ is spherical (thus $Z$-orbits coincide with
$N$-orbits by Proposition \ref{PI.3.1-bis}\,{\rm (b)}) though there
is a $Z$-orbit of $\mathcal{B}$ which contains no point fixed by
$\tau$. Hence the converse of Proposition \ref{PI.3.1-bis}\,{\rm
(a)} is in general not true.
\end{remark}

\section{Nilpotent orbits and orthogonal roots}

\label{SI.3}

Before stating a list of necessary and sufficient conditions for
$Z=Z_G(e)$ to be a spherical subgroup of $G$, we recall some notions
related to nilpotent elements.

Since $e$ is a nilpotent element, its image by the adjoint
representation is a nilpotent endomorphism
$\mathrm{ad}\,e:\mathfrak{g}\to\mathfrak{g}$. Then, the {\it height}
of $e$ is defined as the biggest integer $k\geq 0$ such that
$(\mathrm{ad}\,e)^k\not=0$. Equivalently, $k$ is maximal such that
$\mathfrak{g}(k)\not=0$, for the grading
$\mathfrak{g}=\bigoplus_{i\in\mathbb{Z}}\mathfrak{g}(i)$ of Section
\ref{SI.1}.

If we take a maximal torus $S$ of $Z$, then the Lie algebra
\[\mathfrak{g}_0(e):=\mathrm{Lie}(Z_G(S))=\{x\in\mathfrak{g}:\forall s\in S,\ \mathrm{Ad}(s)x=x\}\]
is a Levi subalgebra of $\mathfrak{g}$ which contains $e$ and which
is minimal for this property. The Lie algebra $\mathfrak{g}_0(e)$
does not essentially depend on the choice of the torus $S$. The type
of the semisimple Lie algebra
$[\mathfrak{g}_0(e),\mathfrak{g}_0(e)]$ is referred to as the type
of the nilpotent orbit $G\cdot e$. This datum arises in the
classification of nilpotent orbits due to Bala and Carter; see
\cite[\S8]{Collingwood-McGovern} for more details.

In the next statement, we also fix a root space decomposition
$\mathfrak{g}=\mathfrak{t}\oplus\bigoplus_{\alpha\in\Phi}\mathfrak{g}_\alpha$
and a system of positive roots $\Phi^+\subset\Phi$. Every nilpotent
element of $\mathfrak{g}$ lies in the (adjoint) $G$-orbit of an
element of the space
$\bigoplus_{\alpha\in\Phi^+}\mathfrak{g}_\alpha$. There is no loss
of generality in assuming that the image of the cocharacter $\tau$
associated to $e$ is contained in the maximal torus $T\subset G$
such that $\mathfrak{t}=\mathrm{Lie}(T)$.

\begin{proposition}[\cite{Panyushev1,Panyushev2}]
\label{P4} The following conditions are equivalent:
\begin{itemize}
\item[\rm (i)] $Z=Z_G(e)$ is a spherical subgroup of $G$;
\item[\rm (ii)] The height of $e$ is at most 3;
\item[\rm (iii)] $e$ belongs to the $G$-orbit of an element obtained as the sum of root vectors corresponding to pairwise orthogonal simple roots;
\item[\rm (iv)] Every simple factor of $[\mathfrak{g}_0(e),\mathfrak{g}_0(e)]$ is of type $A_1$.
\end{itemize}
\end{proposition}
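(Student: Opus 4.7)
The equivalences constitute Panyushev's classification \cite{Panyushev1,Panyushev2}; my plan is to separate the combinatorial/structural implications among (ii), (iii), (iv), which can be treated with $\mathfrak{sl}_2$-theory and Bala--Carter, from the core geometric equivalence (i) $\Leftrightarrow$ (ii), for which I would invoke Panyushev's argument directly.

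For (iii) $\Rightarrow$ (ii) and (iii) $\Rightarrow$ (iv), I would exploit the explicit standard triple available under (iii). If $e=\sum_{i\in I}e_{\alpha_i}$ for pairwise orthogonal simple roots $\alpha_i$, then $h=\sum_{i\in I}\alpha_i^\vee$ and $f=\sum_{i\in I}f_{\alpha_i}$ complete $e$ to a standard triple (the pairwise orthogonality of the $\alpha_i$ ensures that the summands commute). For each root $\beta$ the eigenvalue $\beta(h)=\sum_{i\in I}\langle\beta,\alpha_i^\vee\rangle$ can be bounded by inspection of the support of $\beta$ in each Dynkin type; one finds $|\beta(h)|\leq 3$, giving (ii). For (iv), a maximal torus $S$ of $Z_G(e)$ can be chosen inside $\bigcap_{i\in I}\ker\alpha_i$, whence $\mathfrak{g}_0(e)$ has derived subalgebra equal to the direct sum of the orthogonal $\mathfrak{sl}_2$'s spanned by the triples $(e_{\alpha_i},h_{\alpha_i},f_{\alpha_i})$, which is of type $A_1^{|I|}$.

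For (iv) $\Rightarrow$ (iii), I would invoke the Bala--Carter classification: $e$ is conjugate to a distinguished nilpotent in a Levi subalgebra whose derived part is of type $A_1^{|I|}$, and in type $A_1^{|I|}$ the only distinguished nilpotents are principal, hence sums of root vectors for a pairwise strongly orthogonal set of roots of $\mathfrak{g}$. The subtler point is to further conjugate so that these roots become \emph{simple} orthogonal roots; this is handled along the lines of Proposition \ref{P.orbits}, together with a type-by-type inspection (the Dynkin label analysis showing that the weighted Dynkin diagrams of height $\leq 3$ admit such representatives).

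Finally, the geometric equivalence (i) $\Leftrightarrow$ (ii) is Panyushev's main theorem. Informally, (ii) $\Rightarrow$ (i) rests on the observation that when the height is $\leq 3$, the grading components $\mathfrak{g}(i)$ are concentrated in low degrees, which allows one to bound the number of $B$-orbits of $G\cdot e$ via an induction on the grading and a controlled analysis of the $U$-action. The converse (i) $\Rightarrow$ (ii) is obtained by contrapositive: for a nilpotent of height at least $4$, a nonzero vector of $\mathfrak{g}(4)$ combined with rescaling by the cocharacter $\tau$ produces a one-parameter family of pairwise distinct $B$-orbits in $G\cdot e$. The main obstacle is the direction (ii) $\Rightarrow$ (i), which is the genuinely new content of \cite{Panyushev1,Panyushev2} and for which I would refer the reader to the original papers rather than re-prove it here.
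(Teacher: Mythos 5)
The paper gives no proof of this proposition: it is stated purely as a recollection of Panyushev's classification, with the citation to \cite{Panyushev1,Panyushev2} standing in for the argument, exactly as you do for the essential equivalence (i)$\Leftrightarrow$(ii) and for the reduction to \emph{simple} orthogonal roots in (iii). Your sketches of the remaining implications are sound in outline and closely parallel what the authors later prove in the more general setting of rationally orthogonal roots (Proposition \ref{P.orbits} and Lemma \ref{rational_combination}), so both you and the paper ultimately rest the statement on Panyushev's work.
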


Note that the condition in Proposition \ref{P4}\,{\rm (iii)} (which
corresponds to \cite[Theorem 3.4]{Panyushev2}) yields a kind of
normal form for spherical nilpotent orbits. We actually need a
slightly different version of Proposition \ref{P4}\,{\rm (iii)},
which we give in the next statement.

Recall that two roots $\alpha,\beta\in\Phi$ are said to be strongly
orthogonal if $\alpha+\beta$ and $\alpha-\beta$ do not belong to
$\Phi\cup\{0\}$. In particular, this implies that $\alpha$ and
$\beta$ are orthogonal. We will need a somewhat more restrictive
condition that we now state:

\begin{thomas}
Il y a un peu un melange de notations entre $\beta_i$ et $\theta_i$. Ici on considere des racines $\beta_1, \beta_2, \dots, \beta_k$
mais dans certains enonces on utilise plutot $\theta_i$ (avec aussi la notation $\theta_k$).
Je ne sais pas si c'est un vrai probleme, on peut aussi laisser comme ça dans la mesure ou ça ne prête pas vraiment a confusion, je crois. 
\end{thomas}

\begin{definition}
Let $\beta_1,\ldots,\beta_k \in \Phi$ be linearly independent. We
say that $\beta_1,\ldots,\beta_k$ are {\em rationally orthogonal} if
$$\left (\bigoplus_{i=1}^k \Q\, \beta_i \right ) \cap \Phi = \{ \pm \beta_1, \pm \beta_2, \ldots, \pm \beta_k \} .$$
\end{definition}

A mistake in a previous version of this paper occurring in the
following Proposition has been pointed out to us by J. Gandini, to
whom we are grateful.

\begin{proposition}
\label{P.orbits}
\begin{itemize}
\item[\rm (a)]
Let $\theta_1,\ldots,\theta_r\in\Phi$ be a sequence of rationally
orthogonal roots (not necessarily simple nor positive). For
$i\in\{1,\ldots,r\}$, let $e_{\theta_i} \in \mathfrak{g}_{\theta_i}
\setminus \{0\}$. Then $e=\sum_i e_{\theta_i}$ belongs to a
spherical nilpotent orbit of type~$rA_1$.
\item[\rm (b)]
Conversely, every spherical nilpotent orbit contains an element of
the form $e=\sum_i e_{\theta_i}$ corresponding to a sequence
$\theta_1,\ldots,\theta_r$ of rationally orthogonal roots, where we
may assume in addition that $\sum_i \theta_i^\vee$ is a dominant
coweight.
\end{itemize}
\end{proposition}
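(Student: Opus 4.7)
The plan is to reduce both assertions to the Bala--Carter classification combined with the equivalence (i)$\Leftrightarrow$(iv) of Proposition \ref{P4}. The conceptual observation is that the rational orthogonality of $\theta_1,\ldots,\theta_r$ is exactly the condition that $\{\pm\theta_1,\ldots,\pm\theta_r\}$ is the root system of some Levi subalgebra of $\mathfrak{g}$. In particular, pairwise rational orthogonality already forces pairwise Killing orthogonality: if $(\theta_i,\theta_j)\neq 0$ for some $i\neq j$, then $\Phi \cap (\Q\theta_i + \Q\theta_j)$ would be an irreducible rank-two subsystem of type $A_2$, $B_2$, or $G_2$, each of which contains strictly more than four roots, contradicting the hypothesis.

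For part (a), I set $\mathfrak{s}_i := \mathfrak{g}_{\theta_i} \oplus \K\theta_i^\vee \oplus \mathfrak{g}_{-\theta_i}$. The observation above, together with the fact that $\theta_i \pm \theta_j \notin \Phi$ for $i \neq j$ (again immediate from rational orthogonality), implies that the $\mathfrak{s}_i$ commute pairwise, so that $\mathfrak{s} := \bigoplus_i \mathfrak{s}_i$ is a semisimple subalgebra of type $rA_1$ and $e = \sum_i e_{\theta_i}$ is a regular nilpotent element of $\mathfrak{s}$. Letting $\mathfrak{t}' \subset \mathfrak{t}$ be the common kernel of the $\theta_i$, the subalgebra $\mathfrak{l} := \mathfrak{z}_\mathfrak{g}(\mathfrak{t}')$ is a Levi subalgebra whose root system is $\Phi \cap \bigoplus_i \Q\theta_i = \{\pm\theta_i\}$ by rational orthogonality; hence $\mathfrak{l} = \mathfrak{t} \oplus \mathfrak{s}$, $[\mathfrak{l},\mathfrak{l}] = \mathfrak{s}$, and $\mathfrak{z}(\mathfrak{l}) = \mathfrak{t}'$. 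Since $e$ is regular nilpotent in each $\mathfrak{sl}_2$-summand of $\mathfrak{s}$, the reductive part of the centralizer of $e$ in $\mathfrak{l}$ reduces to $\mathfrak{t}'$; invoking Proposition \ref{P-Levi-Z}(c), this forces $\mathfrak{t}'$ to be the Lie algebra of a maximal torus of $Z$, so $\mathfrak{g}_0(e) = \mathfrak{z}_\mathfrak{g}(\mathfrak{t}') = \mathfrak{l}$. Consequently $[\mathfrak{g}_0(e),\mathfrak{g}_0(e)]$ is of type $rA_1$, and Proposition \ref{P4}(iv) delivers the sphericity and the announced type.

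For part (b), Proposition \ref{P4}(iv) says that $[\mathfrak{g}_0(e),\mathfrak{g}_0(e)]$ is of type $rA_1$ for some $r$. Up to $G$-conjugation, I may assume that $\mathfrak{g}_0(e)$ contains $\mathfrak{t}$, so that its root system takes the form $\{\pm\theta_1,\ldots,\pm\theta_r\}$ for some linearly independent roots $\theta_i$ spanning $r$ pairwise orthogonal $A_1$'s; since $\mathfrak{g}_0(e)$ is a Levi subalgebra, its root system coincides with $\Phi \cap \bigoplus_i \Q\theta_i$, which is exactly the rational orthogonality condition. By Bala--Carter, $e$ is, up to further conjugation in $\mathfrak{g}_0(e)$, a regular nilpotent of its derived subalgebra, and so takes the form $\sum_i e_{\theta_i}$. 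Finally, to arrange that $\sum_i \theta_i^\vee$ be a dominant coweight, I conjugate once more by a lift to $G$ of a suitable Weyl-group element: this preserves the $G$-orbit of $e$, the rational orthogonality of the resulting roots, and the form $\sum_i e_{\theta_i}$. The step I expect to be the most delicate is the identification $\mathfrak{g}_0(e) = \mathfrak{l}$ in (a), which hinges on the $\mathfrak{sl}_2$-representation-theoretic dimension bound that singles out $\mathfrak{t}'$ as a maximal torus of $Z$.
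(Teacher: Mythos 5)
Your overall strategy coincides with the paper's: in both directions the key point is that rational orthogonality forces the minimal Levi $\mathfrak{g}_0(e)$ to be $\mathfrak{z}_\fg(\mathfrak{t}')$ with root system exactly $\{\pm\theta_1,\ldots,\pm\theta_r\}$, after which Proposition \ref{P4}\,(iv) finishes. Part (b) is fine (the paper instead quotes Proposition \ref{P4}\,(iii), but your route through (iv) and Bala--Carter lands in the same place), and so is most of (a).

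The one step that does not work as written is the invocation of Proposition \ref{P-Levi-Z}\,(c). That statement requires $\mathfrak{t}'$ to be a maximal torus of $\mathrm{Lie}(L_Z^0)=\mathfrak{z}_\fg(e)\cap\mathfrak{z}_\fg(h)$, whereas what you actually verify is that $\mathfrak{t}'$ is the reductive part of $\mathfrak{z}_{\fl}(e)$ for your minimal Levi $\fl$. These are different subalgebras: $\mathfrak{z}_\fg(e)\cap\mathfrak{z}_\fg(h)$ is in general strictly larger than $\mathfrak{z}_{\fl}(e)\cap\mathfrak{z}_\fg(h)=\mathfrak{t}'$ (already for $e$ of rank $2$ in $\mathfrak{sl}_4$ it is an $\mathfrak{sl}_2$ not contained in $\fl$), so the cited proposition does not apply to the data you have checked. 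The gap closes with one more use of rational orthogonality: any torus of $Z$ containing $\mathfrak{t}'$ centralizes $\mathfrak{t}'$, hence lies in $\fl=\mathfrak{z}_\fg(\mathfrak{t}')$, hence in $\mathfrak{z}_{\fl}(e)$, whose toral part you have computed to be $\mathfrak{t}'$; therefore $\mathfrak{t}'$ really is a maximal torus of $\mathfrak{z}_\fg(e)$ and $\mathfrak{g}_0(e)=\fl$. This is precisely the role played in the paper's proof by the computation $Z_G(S\cdot\tau(\K^*))=T$, which pins any maximal torus $S'$ of $Z$ containing $S$ inside $T$ and yields $\mathfrak{z}_\fg(S')=\mathfrak{z}_\fg(S)$.
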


\begin{proof}
{\rm (a)} The element $e = \sum_i e_{\theta_i}$ is nilpotent
because, knowing that $\theta_1,\ldots,\theta_r$ are linearly
independent, there can be only finitely many pairs $(\alpha,(n_i))$
where $\alpha$ is $0$ or a root and $(n_i)=(n_1,\ldots,n_r)$ is a
sequence of integers, such that $\alpha + \sum_i n_i \theta_i$ is
$0$ or a root. Note that $h:=\sum_i\theta_i^\vee$ (as an element of
$\mathfrak{t}$) satisfies $[h,e]=2e$, and we can find
$f_{\theta_i}\in\mathfrak{g}_{-\theta_i}$ such that $(e,h,\sum_i
f_{\theta_i})$ is a standard triple. This implies that
$\tau:=\sum_i\theta_i^\vee$ (seen this time as a cocharacter of $T$)
is a cocharacter associated to $e$ in the sense of Section
\ref{SI.1}.

If $\alpha$ is a root, we denote
$\mathfrak{sl}_2(\alpha):=\mathfrak{g}_{-\alpha}\oplus[\mathfrak{g}_{-\alpha},\mathfrak{g}_\alpha]
\oplus\mathfrak{g}_\alpha$, which is a subalgebra of $\mathfrak{g}$
isomorphic to $\mathfrak{sl}_2(\K)$.

Note that $S:=\bigcap_i\ker \theta_i$ is a torus contained in $Z$,
actually it is also contained in the subgroup $L_Z^0=Z_G(\tau)\cap
Z^0$. In view of Proposition \ref{P-Levi-Z}, there is a maximal
torus $S'$ of $Z$ such that $S\subset S'\subset L_Z^0$.

We claim that the centralizer of $S\cdot\tau(\K^*)$ is $T$. Indeed,
for otherwise, there is a root $\alpha$ which is trivial on
$S\cdot\tau(\K^*)$. Since $\alpha$ is in particular trivial on $S$,
it is a linear combination of the roots $\theta_i$ with rational
coefficients. Since the roots $\theta_i$ are rationally orthogonal,
there is an integer $i$ such that $\alpha = \pm \theta_i$. Finally,
since $\alpha$ must also be trivial on $\tau = \theta_1^\vee +
\cdots + \theta_r^\vee$, we get a contradiction. Thus, the
centralizer of $S \cdot \tau(\K^*)$ is $T$. This implies that
$S'\subset T$, hence
$\mathfrak{t}\subset\mathfrak{z}_\mathfrak{g}(S'):=\{x\in\mathfrak{g}:\forall
s\in S',\ \mathrm{Ad}(s)x=x\}$.

Our argument also shows that
\[\mathfrak{z}_\mathfrak{g}(S) = \mathfrak{t} + \bigoplus_{i=1}^r \mathfrak{sl}_2(\theta_i)\quad\mbox{and so}
\quad
[\mathfrak{z}_\mathfrak{g}(S),\mathfrak{z}_\mathfrak{g}(S)]=\bigoplus_{i=1}^r
\mathfrak{sl}_2(\theta_i).\] Since $\mathfrak{t}+\K
e\subset\mathfrak{z}_\mathfrak{g}(S')\subset\mathfrak{z}_\mathfrak{g}(S)$,
we get $\mathfrak{z}_\mathfrak{g}(S')=\mathfrak{z}_\mathfrak{g}(S)$,
i.e., $\mathfrak{g}_0(e)=\mathfrak{z}_\mathfrak{g}(S)$ with the
notation of Proposition \ref{P4}. Hence
$[\mathfrak{g}_0(e),\mathfrak{g}_0(e)]$ is of type $rA_1$ and, since
the condition in Proposition \ref{P4}\,{\rm (iv)} is satisfied, the
orbit $G\cdot e$ is spherical.

{\rm (b)} Conversely, let $e$ be a nilpotent element and $S$ a
maximal torus of $Z$ contained in $L_Z^0$. Up to the action of an
element of $G$, we may assume that $S \cdot \tau(\K^*) \subset T$.
Assuming that $e$ is spherical, by Proposition \ref{P4}(iii), we may
write $e=\sum_i e_{\theta_i}$ with $\theta_i$ a set of rationally
orthogonal roots.

If the sum $\sum_i \theta_i^\vee$ is not dominant, then there is a
simple root $\alpha$ such that $s_{\alpha}(\sum_i \theta_i^\vee) >
\sum_i \theta_i^\vee$. Then,
$(s_{\alpha}(\theta_1),\ldots,s_{\alpha}(\theta_r))$ is again a
sequence of rationally orthogonal roots with $\sum_i
s_{\alpha}(\theta_i)^\vee > \sum_i \theta_i^\vee$. The $W$-orbit of
$\sum_i \theta_i^\vee$ being bounded, this process must terminate
with some sequence $(\theta_i)$ such that the sum $\sum_i
\theta_i^\vee$ is dominant.
\end{proof}

\begin{remark}
\label{R2} For $(\theta_i)$ such that $\sum_i \theta_i^\vee$ is
dominant, if we label the vertex $\alpha$ of the Dynkin diagram of
$\mathfrak{g}$ with the  number $\alpha(\sum_i\theta_i^\vee)$, then
we obtain a weighted Dynkin diagram which is precisely the one
parametrizing the nilpotent orbit $\mathcal{O}_e$ in the sense of
\cite[\S3.5]{Collingwood-McGovern}. In particular the coweight
$\sum_i\theta_i^\vee$ is independent of the sequence $(\theta_i)$
involved in the statement.
\end{remark}

We now give a quite precise description of when rational
orthogonality differs from orthogonality. We write $\Phi \supset
X_r$ to mean that the root system of $\Phi$ contains a root
subsystem of type $X_r$.

\begin{lemma}
\label{rational_combination} Let $\beta_1,\ldots,\beta_k$ be a set
of orthogonal roots and let $n_1,\ldots,n_k \in \Q \setminus \{0\}$
be such that $\sum n_i \beta_i \in \Phi$. Then one of the following
holds:
\begin{itemize}
 \item[($D_4$)] $k=4$, all the roots $\beta_i$ have the same length, $\frac12(\beta_1+\beta_2+\beta_3+\beta_4)$ is a root, and $\Phi \supset D_4$.
 \item[($B_3$)] $k=3$ and, up to reordering the roots $\beta_i$, we have that $\beta_1$ and $\beta_2$ are long and $\beta_3$ is short,
 $\frac12(\beta_1+\beta_2+2\beta_3)$ is a long root, and $\Phi \supset B_3$.
 \item[($C_3$)] $k=3$ and, up to reordering the roots $\beta_i$, we have that $\beta_1$ and $\beta_2$ are short and $\beta_3$ is long,
 $\frac12(\beta_1+\beta_2+\beta_3)$ is a short root, and $\Phi \supset C_3$.
 \item[($B_2$ long)] $k=2$, $\beta_1$ and $\beta_2$ are short, $\beta_1+\beta_2$ is a long root, and $\Phi \supset B_2$.
 \item[($B_2$ short)] $k=2$, $\beta_1$ and $\beta_2$ are long, $\frac12(\beta_1+\beta_2)$ is a short root, and $\Phi \supset B_2$.
 \item[($G_2$ both)] $k=2$, up to reordering the roots $\beta_i$, $\beta_1$ is long and $\beta_2$ is short,
 $\frac12(\beta_1+3\beta_2)$ is a long root, $\frac12(\beta_1+\beta_2)$ is a short root, and $\Phi \supset G_2$.
 \item[($A_1$)] $k=1$ and $n_1=\pm 1$.
\end{itemize}
\end{lemma}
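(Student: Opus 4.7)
The plan is to combine the orthogonality of the $\beta_i$'s (which makes the length formula fully explicit) with the crystallographic constraint that Cartan integers between non-proportional roots are integers of absolute value at most $3$, and at most $2$ outside of type $G_2$. Writing $\beta=\sum n_i\beta_i$, orthogonality yields $(\beta,\beta_i)=n_i(\beta_i,\beta_i)$, and hence $\langle\beta,\beta_i^\vee\rangle=2n_i\in\Z$; set $m_i:=2n_i\in\Z\setminus\{0\}$. With $d_i:=|\beta_i|^2/|\beta|^2\in\{\tfrac13,\tfrac12,1,2,3\}$, the reciprocal Cartan integer $\langle\beta_i,\beta^\vee\rangle=m_id_i$ must also be a nonzero integer bounded by $2$ (resp.\ $3$ in $G_2$), assuming $\beta_i\neq\pm\beta$ (otherwise one falls back to $k=1$). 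The length equation $|\beta|^2=\sum n_i^2|\beta_i|^2$ then rescales to the Diophantine condition
\[\sum_{i=1}^k m_i^2\, d_i \;=\; 4,\]
which, subject to the above integrality and boundedness constraints, has only finitely many solutions.

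The case $k=1$ gives $n_1=\pm 1$, i.e.\ case $(A_1)$. For $k\geq 2$ I would split by root system type. Outside of $G_2$ one has $d_i\in\{\tfrac12,1,2\}$, and a direct enumeration of integer solutions to $\sum m_i^2 d_i=4$ satisfying the integrality of each $m_id_i$ yields exactly four configurations: $k=4$ with all $\beta_i$ and $\beta$ of a common length and each $m_i=\pm 1$, giving case $(D_4)$; $k=3$ with two $\beta_i$ long and one short, or the reverse, giving cases $(B_3)$ and $(C_3)$; and $k=2$ with both $\beta_i$ of one length and $\beta$ of the other, giving cases $(B_2\,\text{long})$ and $(B_2\,\text{short})$. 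In $G_2$ the rank already forces $k\leq 2$, and any two orthogonal roots must have different lengths (since same-length roots in $G_2$ make angles that are multiples of $\pi/3$, hence none is orthogonal); the constraint $|m_id_i|\leq 3$ together with $\sum m_i^2 d_i=4$ then pins down exactly the coefficients of case $(G_2\,\text{both})$.

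It remains to check that in each case $\Phi$ contains a root subsystem of the claimed type $X_r$. For this I would apply the reflections $s_{\beta_i}$ to the newly-found root, observe that the resulting family of roots $\pm\tfrac12(\pm\beta_1\pm\cdots\pm\beta_k)$ together with the $\pm\beta_i$'s assemble into a root system whose Cartan matrix is that of $X_r$, and conclude via the classification. Equivalently, one exhibits a standard realization: for $(D_4)$, taking $\beta_1=\varepsilon_1+\varepsilon_2,\,\beta_2=\varepsilon_1-\varepsilon_2,\,\beta_3=\varepsilon_3+\varepsilon_4,\,\beta_4=\varepsilon_3-\varepsilon_4$ makes $\beta=\varepsilon_1+\varepsilon_3$ and generates the $24$ roots of $D_4$; the other cases admit analogous direct verifications inside $B_2, B_3, C_3, G_2$. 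The main obstacle is really the case analysis itself: one must ensure that the Diophantine enumeration is both complete and non-redundant, and that each arithmetic solution actually arises from an orthogonal root family. The rigidity of the classification comes precisely from imposing the integrality of both Cartan integers attached to every pair $(\beta_i,\beta)$ simultaneously.
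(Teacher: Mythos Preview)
Your approach is correct and takes a genuinely different route from the paper's. The paper does not work with the length identity at all: instead it assembles the set $R=\{\beta_1,\ldots,\beta_k,-\beta\}$, observes (citing \cite[Lemma~5.2]{gandini_isotropy}) that the matrix $(\langle\alpha,\gamma^\vee\rangle)_{\alpha,\gamma\in R}$ is a generalized Cartan matrix of finite or affine type with $-\beta$ connected to every $\beta_i$, and then runs through the twelve star-shaped diagrams in Kac's tables, discarding those whose null-vector relation is incompatible with the lengths. Your argument replaces this external classification by the single Diophantine equation $\sum m_i^2 d_i=4$ together with the integrality of both $m_i$ and $m_i d_i$, which forces each summand $m_i^2 d_i=|m_i|\cdot|m_i d_i|$ to be a positive integer and immediately bounds $k\le 4$.

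What each approach buys: the paper's method makes the containment $\Phi\supset X_r$ essentially automatic, since in each surviving affine case one reads off a finite sub-diagram (e.g.\ $\beta_2,-\beta,\beta_3,\beta_4$ generate a $D_4$ inside $D_4^{(1)}$); in your approach this step requires a separate verification by reflecting $\beta$ under the $s_{\beta_i}$ and computing a Cartan matrix, as you indicate. Conversely, your argument is entirely self-contained and avoids the twelve-case affine check. One small loose end in your sketch: in the $k=2$ partition $2+2$ the purely arithmetic enumeration also allows the mixed case $d_1=\tfrac12,\,d_2=2$, which you should rule out by observing that $\beta,\beta_1,\ldots,\beta_k$ all lie in a single irreducible component of $\Phi$ (since each $n_i\neq 0$), so at most two squared lengths occur and three distinct values of $d_i\cdot|\beta|^2$ are impossible. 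With that remark, your case analysis is complete.
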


\begin{pe}
 reformulation de la remarque ci-dessous et réécriture de quelques points dans la preuve comme indiqué dans le draft annoté.
\end{pe}
\begin{remark}
\label{rem:5cases} The seven cases above can occur. In the list
given below, we use the standard numbering
$(\alpha_1,\ldots,\alpha_k)$ of the simple roots and we denote by
$\theta$ the highest root. Moreover, in each case, if we observe that a linear combination
$\sum q_i \beta_i$ is a root, then we let $e$ denote the nilpotent element $\sum e_{\beta_i}$.
\begin{itemize}
\item[($D_4$)] in type $D_4$, $\frac12(\theta+\alpha_1+\alpha_3+\alpha_4) \in \Phi$. The height of $e$ is $4$.
\item[($B_3$)] in type $B_3$, $\frac12(\theta+\alpha_1+2\alpha_3) \in \Phi$. The height of $e$ is $4$.
\item[($C_3)$] in type $C_3$, $\frac12(\alpha_2+(\alpha_2+\alpha_3)+\theta) \in \Phi$. The height of $e$ is $2$.
\item[($B_2$ long)] in type $B_2$, $\alpha_2+(\alpha_1+\alpha_2) \in \Phi$. The height of $e$ is $2$.
\item[($B_2$ short)] in type $B_2$, $\frac12(\theta+\alpha_1)=\alpha_1+\alpha_2 \in \Phi$. The height of $e$ is $2$.
\item[($G_2$ both)] in type $G_2$, $\frac12(\theta+3\alpha_1)=3\alpha_1+\alpha_2 \in \Phi$ and
 $\frac12(\theta+\alpha_1)=2\alpha_1+\alpha_2 \in \Phi$. The height of $e$ is $4$.
\end{itemize}
Note also that, if $\beta_1,\beta_2,\beta_3$ is a set of three
orthogonal roots which satisfy ($B_3$), resp. ($C_3$), then
$\beta_1,\beta_2$ satisfy ($B_2$ short), resp. ($B_2$ long).
\end{remark}
\begin{proof}[Proof of Remark \ref{rem:5cases}]
We justify the value of the height of $e$ indicated in each case.

In cases ($D_4$), ($B_3$), ($B_2$ short) and ($G_2$ both),
no integral combination of the roots $\beta_i$ is a root. It follows
that we can find an $\fsl_2$-triple $(e,h,f)$, whose element $h$ is
the sum of the coroots. In case $(D_4)$, we have
$h=\theta^\vee+\alpha_1^\vee+\alpha_3^\vee+\alpha_4^\vee$. This is
equal to
$2\varpi_1^\vee-2\varpi_2^\vee+2\varpi^\vee_3+2\varpi^\vee_4$, which
is in the $W$-orbit of $2\varpi_2^\vee$. The height of $e$ is then
the value of this coweight on the highest root, namely
$\scal{\theta,2\varpi_2^\vee}=4$.

The other cases are similar. In case $(B_3)$,
$h=2\varpi_1^\vee-2\varpi_2^\vee+2\varpi_3^\vee$, which is
equivalent to $2\varpi_2^\vee$. In case ($B_2$ short), $h=2\varpi_1^\vee$. In case
($G_2$ both), we have $\beta_1=\theta$ and $\beta_2=\alpha_1$. Then
$h=\beta_1^\vee + \beta_2^\vee = 2 \varpi_1^\vee - 2 \varpi_2^\vee$,
and $s_2(h)=2\varpi_2^\vee$. Thus $e$ has height
$\scal{\theta,2\varpi_2^\vee}=4$.

The cases ($C_3$) and ($B_2$ long) are addressed in Lemmas \ref{typeC} and \ref{typeB}.
\end{proof}

\pgfkeys{/Dynkin diagram,edge length=7mm}

\begin{proof}[Proof of Lemma \ref{rational_combination}]
Let $\beta_1,\ldots,\beta_k\in\Phi$ and $n_1,\ldots,n_k \in \Q
\setminus \{0\}$ be as in the statement of the lemma. Let $\beta =
\sum n_i \beta_i$. If $i \in \{ 1, \ldots, k \}$, then
$s_{\beta_i}(\beta)=-n_i \beta_i + \sum_{j \neq i} n_j \beta_j \in
\Phi$, so we may assume that $n_i >0$ for all $i$.

We consider the set $R=\{\beta_1,\ldots,\beta_k,-\beta\}$ and the
matrix $A=(\scal{\alpha,\gamma^\vee})_{\alpha,\gamma \in R}$. By
\cite[Lemma 5.2]{gandini_isotropy}, this matrix is a generalized
Cartan matrix of finite or affine type, and $-\beta$ is connected to
all the other roots since $\scal{\beta,\beta_i^\vee}=2n_i>0$ for all
$i$. Moreover, we have
$n_i=\frac12\scal{\beta,\beta_i^\vee}\in\frac{1}{2}\Z$.

If $A$ has size $2$, then $\beta$ is a scalar multiple of $\beta_1$,
and we are in case ($A_1$) of the statement.

For $A$ being of size $k\geq 3$, we use the classification tables
given at the end of chapter 4 in \cite{kac}, which comprise twelve
remaining cases that may fit the configuration of the root system.
\begin{enumerate}
 \item $A$ is of type $D_4^{(1)}$ and the Dynkin diagram of $A$ is
 \dynkin[labels={\beta_1,\beta_2,-\beta,\beta_3,\beta_4}] D[1]4.
 Then $\beta=\frac12(\beta_1+\beta_2+\beta_3+\beta_4)$, and $\beta_2,-\beta,\beta_3,\beta_4$ generate a subsystem
 of type $D_4$, so $\Phi \supset D_4$. We are in case ($D_4$) of the statement.
 \item $A$ is of type $B_3^{(1)}$ and the Dynkin diagram of $A$ is
 \dynkin[labels={\beta_1,\beta_2,,\beta_3},labels*={,,-\beta,},edge length=7mm] B[1]3.
 Then $\beta=\frac12(\beta_1+\beta_2+2\beta_3)$, and $\beta_2,-\beta,\beta_3$ generate a subsystem of type $B_3$. Moreover,
 $\beta_1,\beta_2,\beta$ are long and $\beta_3$ is short. We are in case ($B_3$).
 \item $A$ is of type $A_5^{(2)}$ and the Dynkin diagram of $A$ is
 \dynkin[labels={\beta_1,\beta_2,-\beta,\beta_3}] A[2]5.
 Then $\beta=\frac12(\beta_1+\beta_2+\beta_3)$, and $\beta_2,-\beta,\beta_3$ generate a subsystem of type $C_3$. Moreover, $\beta_1,\beta_2,\beta$ are short and $\beta_3$ is long. We are in case ($C_3$).
 \item $A$ is of type $D_3^{(2)}$ and the Dynkin diagram of $A$ is
 \dynkin[labels={\beta_1,-\beta,\beta_2}] D[2]3.
 Then $\beta=\beta_1+\beta_2$; $\beta_1$ and $\beta_2$ have the same length and $\beta$ is longer. Moreover,
 $-\beta,\beta_2$ generate a root subsystem of type $B_2$. We are in case ($B_2$ long).
 \item $A$ is of type $C_2^{(1)}$ and the Dynkin diagram of $A$ is
 \dynkin[labels={\beta_1,-\beta,\beta_2}] C[1]2.
 Then $\beta=\frac12(\beta_1+\beta_2)$; $\beta_1$ and $\beta_2$ have the same length and $\beta$ is  shorter. Again, $\beta_2,-\beta$ generate a subsystem of type $B_2$. We are in case ($B_2$ short).
 \item $A$ is of type $G_2^{(1)}$ and the Dynkin diagram of $A$ is
 \dynkin[labels={\beta_1,-\beta,\beta_2}] G[1]2.
 Then $\beta=\frac12(\beta_1+3\beta_2)$; $\beta_1,\beta$ are long roots, while $\beta_2$ is a short root as well as $\frac{1}{2}(\beta_1+\beta_2)$, since
$s_{\beta}(\beta_2)=\beta_2-\beta=\frac12(-\beta_1-\beta_2)$.
Moreover, $-\beta,\beta_2$ generate a subsystem of type $G_2$. We
are in case ($G_2$ both).
 \item $A$ is of type $D_4^{(3)}$ and the Dynkin diagram of $A$ is
 \dynkin[labels={\beta_1,-\beta,\beta_2}] D[3]4.
 Then $\beta=\frac12(\beta_1+\beta_2)$; $\beta_1,\beta$ are short roots while $\beta_2$ is a long root as well as $\frac{1}{2}(3\beta_1+\beta_2)$, since $s_\beta(\beta_2)=\frac12(-3\beta_1-\beta_2)$. Again, $\beta_2,-\beta$ generate a subsystem
 of type $G_2$, and we are in case ($G_2$ both).
\item $A$ is of type  $A_4^{(2)}$ and the Dynkin diagram of $A$ is
 \dynkin[labels={\beta_1,-\beta,\beta_2}] A[2]4.
Then the three roots $\beta_1,\beta,\beta_2$ have pairwise distinct
lengths. This is a contradiction. This case cannot occur.
\end{enumerate}
We now consider the cases where $A$ is of finite type.
\begin{enumerate}\setcounter{enumi}{8}
 \item $A$ is of type $D_4$ and the Dynkin diagram of $A$ is
 \dynkin[labels={\beta_1,,\beta_2,\beta_3},labels*={,-\beta,,},edge length=7mm] D4.
 Then the four roots $\beta_1,-\beta,\beta_2,\beta_3$ have the same length, but $\beta=\frac12(\beta_1+\beta_2+\beta_3)$. This is a contradiction.
 This case does not occur.
 \item $A$ is of type $B_3$ and the Dynkin diagram of $A$ is \dynkin[labels={\beta_1,-\beta,\beta_2}] B3.
 Then $\beta=\frac12(\beta_1+2\beta_2)$ so $||\beta||^2= \frac14 ||\beta_1||^2 + ||\beta_2||^2 = \frac32 ||\beta_2||^2$.
 This is again a contradiction, since we should have $||\beta||^2=2 ||\beta_2||^2$.
 \item $A$ is of type $C_3$ and the Dynkin diagram of $A$ is \dynkin[labels={\beta_1,-\beta,\beta_2}] C3.
Then $\beta=\frac12(\beta_1+\beta_2)$, so
$\|\beta\|^2=\frac34 \|\beta_2\|^2$,
 and once again the lengths do not match.
 \item $A$ is of type $A_3$ and the Dynkin diagram of $A$ is \dynkin[labels={\beta_1,-\beta,\beta_2}] A3.
 Then $\beta=\frac12(\beta_1+\beta_2)$, which is again a contradiction since the three roots should have the same length.
\end{enumerate}
\end{proof}

It appears from Remark \ref{rem:5cases} that, if $(\theta_k)$ is a
sequence of orthogonal roots which are not rationally orthogonal,
the orbit of the nilpotent element $\sum e_{\theta_k}$ may
nevertheless be spherical. We now give a full characterization of
this property.

\begin{proposition}
\label{P.orthogonal.spherical} Assume that $G$ is simple. Let
$\Theta=(\theta_1,\ldots,\theta_r)$ be a sequence of orthogonal
roots and, for every $k\in\{1,\ldots,r\}$, let
$e_{\theta_k}\in\fg_{\theta_k}\setminus\{0\}$. Then, the orbit
$G\cdot e$ of the element $e=\sum_{k=1}^r e_{\theta_k}$ is spherical
except exactly in the following cases (where we refer to the
conditions listed in Lemma \ref{rational_combination}).
\begin{enumerate}
    \item $G$ has type $D$ or $E_i$ ($i\in\{6,7,8\}$), and $\Theta$ contains four roots which
    satisfy condition {\rm ($D_4$)};
    \item $G$ has type $B$ or $F_4$, and $\Theta$ contains three roots which satisfy condition {\rm ($B_3$)} or two
    pairs of roots which satisfy condition {\rm ($B_2$ short)} with no common element;
    \item $G$ has type $G_2$, and $\Theta$ contains one pair of roots which satisfy condition {\rm ($G_2$ both)}.
\end{enumerate}
\end{proposition}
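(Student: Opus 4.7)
The plan is to apply Proposition~\ref{P4}(ii) by computing the height of $e=\sum_k e_{\theta_k}$. If $\Theta$ is rationally orthogonal, Proposition~\ref{P.orbits}(a) immediately delivers sphericality, so we assume it is not; Lemma~\ref{rational_combination} then guarantees that $\Theta$ contains subsets realizing one of the six configurations $(D_4)$, $(B_3)$, $(C_3)$, $(B_2\text{ long})$, $(B_2\text{ short})$, or $(G_2\text{ both})$.

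I would organize $\Theta$ into blocks via the transitive closure of the relation ``appearing together in some configuration of Lemma~\ref{rational_combination}''. Two roots $\theta,\theta'\in\Theta$ lying in distinct blocks are automatically strongly orthogonal: if $\theta\pm\theta'$ were in $\Phi$, the pair $\{\theta,\theta'\}$ itself would satisfy $(B_2\text{ long})$ or $(B_2\text{ short})$ and the two roots would land in the same block. Hence root vectors attached to distinct blocks commute, and $e$ decomposes as $\sum_C e_C$ with the $e_C$'s lying in pairwise commuting semisimple subalgebras. A Jacobson--Morozov triple for $e$ is then obtained by summing triples $(e_C,h_C,f_C)$ for the pieces, so the height of $e$ equals $\max_{\alpha\in\Phi}\sum_C\alpha(h_C)$.

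The next step is to identify $h_C$ and the intrinsic height for each block. When the roots of the block are strongly orthogonal in $\mathfrak{g}$ --- which holds for rationally orthogonal singletons and for blocks of types $(D_4)$, $(B_3)$, $(B_2\text{ short})$, $(G_2\text{ both})$ --- one can take $h_C=\sum_{\theta\in C}\theta^\vee$; this yields $\alpha(h_C)=4$ on the relevant ``derived'' root of the configuration (for $(D_4),(B_3),(G_2\text{ both})$) and $\alpha(h_C)\leq 2$ otherwise. For the $(C_3)$ and $(B_2\text{ long})$ blocks the roots are not strongly orthogonal in $\mathfrak{g}$, the naive formula for $h_C$ overshoots, and the correct Jacobson--Morozov element (identified in Lemmas~\ref{typeC} and~\ref{typeB}) is smaller, giving intrinsic height $2$. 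The one remaining subtlety is that $\alpha(h)=\sum_C\alpha(h_C)$ for a root $\alpha$ outside every single $\Phi_C$ may exceed the block-by-block maxima: this happens precisely in types $B$ and $F_4$, where two disjoint $(B_2\text{ short})$ pairs jointly form a $(D_4)$ configuration of $\Phi$ (their four long roots satisfy $\tfrac{1}{2}\sum\beta_i\in\Phi$), contributing a height-$4$ witness.

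Tracing the possible types of $G$ finally recovers the three exclusion cases: in types $D,E$ only $(D_4)$ contributes a height-$4$ obstruction (case (1)); in types $B,F_4$ the obstructions are $(B_3)$ and the $(D_4)$ produced by two disjoint $(B_2\text{ short})$ pairs (case (2)); in type $G_2$ only $(G_2\text{ both})$ applies (case (3)). In types $A$ and $C$ no configuration produces height $4$: in type $C$ the ``$(D_4)$-looking'' configurations use short roots whose pairwise sums are long roots of $\Phi$, so the corresponding block collapses transitively into a $(B_2\text{ long})$-type block of intrinsic height $2$. The hardest technical step is the analysis of the non-strongly-orthogonal configurations $(C_3)$ and $(B_2\text{ long})$, for which the naive Jacobson--Morozov recipe fails; this is the content of the two dedicated lemmas the paper devotes to them.
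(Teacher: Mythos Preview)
Your block-decomposition strategy differs from the paper's proof, which is a direct type-by-type analysis: in types $A$, $D$, $E$, $G_2$ orthogonal roots are automatically strongly orthogonal, so $h=\sum_k\theta_k^\vee\in\mathfrak t$, sphericity follows from Proposition~\ref{P.orbits} when the roots are rationally orthogonal, and otherwise the single obstructing configuration ($(D_4)$ resp.\ $(G_2\text{ both})$) forces height $\geq 4$ by the computation in Remark~\ref{rem:5cases}. Types $C$, $B$, $F_4$ are each handled by a separate lemma (Lemmas~\ref{typeC}--\ref{typeF}), the key tool being Lemma~\ref{lemm:exp} and Remark~\ref{rem:oneshortroot}, which allow one to delete a root from any $(B_2\text{ long})$ pair without changing the $G$-orbit.

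There are two gaps in your outline. First, the formula ``height $=\max_{\alpha}\sum_C\alpha(h_C)$'' requires every $h_C$ to lie in $\mathfrak t$, but for a block containing a $(B_2\text{ long})$ pair the root vectors do not commute, so $\sum_{\theta\in C}\theta^\vee$ is \emph{not} the $h$ of any Jacobson--Morozov triple for $e_C$; the actual $h_C$, obtained by conjugating via Lemma~\ref{lemm:exp}, does not lie in $\mathfrak t$ and the expression $\alpha(h_C)$ is undefined. (Lemmas~\ref{typeC} and~\ref{typeB} do not identify such an $h_C$: the former first discards redundant roots and only then computes, the latter uses the matrix rank criterion of \cite[Theorem~2.3]{Panyushev2}.) Second, your block definition is applied inconsistently: two disjoint $(B_2\text{ short})$ pairs whose four roots together satisfy $(D_4)$ form, by your own transitive-closure rule, a \emph{single} block rather than two, so there is no cross-term --- the height-$4$ witness should arise from that block's internal analysis.

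Both gaps are repaired by following the paper's reduction: apply Lemma~\ref{lemm:exp} to remove all $(B_2\text{ long})$ pairs first (this also absorbs $(C_3)$), leaving a strongly orthogonal set $\Theta'$ with $h=\sum_{\theta\in\Theta'}\theta^\vee\in\mathfrak t$; your height computation then goes through cleanly. At that point, however, the block framework is no longer doing work beyond the paper's type-by-type argument.
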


\begin{proof}
If $G$ has type $A$, then $\theta_1,\ldots,\theta_r$ are always
rationally orthogonal, and the result follows from Proposition
\ref{P.orbits}. In the other simply laced types $D$, $E_6$, $E_7$,
$E_8$, the roots $\theta_1,\ldots,\theta_r$ are rationally
orthogonal except if four of them satisfy condition ($D_4$), in
which case $e$ will have height $\geq 4$ in view of Remark
\ref{rem:5cases}. In type $G_2$, the roots are rationally orthogonal
unless they satisfy ($G_2$ both), in which case the claim that
$G\cdot e$ is not spherical again follows from Remark
\ref{rem:5cases}. Finally, when $G$ has type $C$, $B$, or $F_4$, the
result respectively follows from Lemmas \ref{typeC}, \ref{typeB}, or
\ref{typeF} below.
\end{proof}

In the following sequence of lemmas, we consider the notation of
Proposition \ref{P.orthogonal.spherical}.

\begin{lemma}    \label{lemm:oneroot}
 Let $i,j \in \{1,\ldots,r\}$ be such that $\theta_i-\theta_j \in \Phi$. Then, for every $k \neq j$, $\theta_k + \theta_i - \theta_j \not \in \Phi$.
\end{lemma}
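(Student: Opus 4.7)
My plan is to argue by squared-length comparison, exploiting the constraint that an irreducible root system admits at most two root lengths, in squared ratio $1$, $2$, or $3$. First I would write down the norm identities coming from the hypothesized orthogonality: since $\theta_i\perp\theta_j$, one has
\[ \|\theta_i-\theta_j\|^2 = \|\theta_i\|^2 + \|\theta_j\|^2, \]
and, using in addition that $\theta_k\perp\theta_j$ (which holds as $k\neq j$), one computes
\[ \|\theta_k+\theta_i-\theta_j\|^2 = \|\theta_k\|^2 + \|\theta_i\|^2 + \|\theta_j\|^2 + 2(\theta_k,\theta_i), \]
where the last inner product equals $\|\theta_i\|^2$ if $k=i$ and $0$ otherwise. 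In both cases this squared length strictly exceeds $\|\theta_i-\theta_j\|^2$.

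Since $G$ is simple, $\Phi$ is irreducible, and I would then split into cases by type. In the simply laced types $A$, $D$, $E$, every root has a common squared length $c$; the quantity $\|\theta_i-\theta_j\|^2=2c$ is not the squared length of any root, so already $\theta_i-\theta_j\notin\Phi$ and the statement is vacuous. In type $G_2$, a direct inspection of the possible squared lengths (in ratio $1:3$) shows that no sum of two of them matches either root squared length, so again $\theta_i-\theta_j\notin\Phi$ and the statement is vacuous.

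The only substantive case is therefore the doubly laced types $B_n$, $C_n$, $F_4$, where the two squared lengths are in ratio $1:2$. For $\|\theta_i\|^2+\|\theta_j\|^2$ to agree with a root squared length, both $\theta_i$ and $\theta_j$ must be short (of squared length $c$) and $\theta_i-\theta_j$ must be long (squared length $2c$). Then from the identities above, $\|\theta_k+\theta_i-\theta_j\|^2$ equals either $\|\theta_k\|^2+2c\geq 3c$ (if $k\neq i$) or $5c$ (if $k=i$), and in either case strictly exceeds the long squared length $2c$. Hence $\theta_k+\theta_i-\theta_j\notin\Phi$.

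I do not anticipate any real obstacle: once the norm identities are in place, the conclusion reduces to a short consultation of the classification of root lengths in finite irreducible root systems. The only point requiring minor care is the edge case $k=i$, which the second norm identity absorbs into the same length estimate.
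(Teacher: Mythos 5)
Your proof is correct and follows essentially the same route as the paper's: both arguments rest on the norm identities coming from orthogonality of the $\theta$'s, the observation that $\theta_i-\theta_j\in\Phi$ forces $\theta_i,\theta_j$ short and $\theta_i-\theta_j$ long (so only doubly laced types matter), and the resulting length overshoot for $\theta_k+\theta_i-\theta_j$, including the separate bookkeeping for $k=i$. The paper merely compresses the type-by-type length check into the phrases ``since $\beta$ is a long root'' and ``for length reasons''.
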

\begin{proof}
Let $\beta=\theta_i-\theta_j \in \Phi$ and let $k \neq i,j$. Since
$\scal{\theta_k,\theta_i^\vee}=0$ and
$\scal{\theta_k,\theta_j^\vee}=0$, we have
$\scal{\theta_k,\beta^\vee}=0$. Since $\beta$ is a long root, it
follows that $\theta_k + \beta \not \in \Phi$. For length reasons,
we must also have $2\theta_i - \theta_j \notin \Phi$.
\end{proof}

\begin{pe}
Tout à la fin, $2\theta_i - \theta_j \notin \Phi$. 
\end{pe}

\begin{lemma}    \label{lemm:exp}
Assume that there are $i,j \in \{1,\ldots,r\}$ such that
$\theta_i-\theta_j \in \Phi$. Then
$$\sum_{k=1}^r e_{\theta_k} \in G \cdot \sum_{k \neq i} e_{\theta_k}\,.$$
\end{lemma}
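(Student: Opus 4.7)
The plan is to produce the missing term $e_{\theta_i}$ by conjugating $\sum_{k\neq i} e_{\theta_k}$ by a one-parameter unipotent subgroup in the direction of the root $\beta := \theta_i - \theta_j$. Concretely, I choose $e_\beta \in \mathfrak{g}_\beta \setminus \{0\}$ and consider $u_t := \exp(t e_\beta) \in G$, so that $\mathrm{Ad}(u_t)$ acts as $\exp(t \cdot \mathrm{ad}(e_\beta))$ on $\mathfrak{g}$.

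The computation of $\mathrm{Ad}(u_t)$ on each $e_{\theta_k}$ is controlled by Lemma \ref{lemm:oneroot}. For every $k\neq j$ the root sum $\beta + \theta_k$ does not belong to $\Phi$, so $[e_\beta, e_{\theta_k}] = 0$. Applied to $k \neq i, j$ this yields $\mathrm{Ad}(u_t) e_{\theta_k} = e_{\theta_k}$, and applied to $k = i$ it yields $[e_\beta, e_{\theta_i}] = 0$, which is exactly what is needed to ensure that the exponential series terminates in the remaining case.

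For $k = j$ the situation is different, since $\beta + \theta_j = \theta_i \in \Phi$ forces $[e_\beta, e_{\theta_j}]$ to be a nonzero element of $\mathfrak{g}_{\theta_i}$. But now $(\mathrm{ad}\,e_\beta)^2 e_{\theta_j}$ lies in $\mathfrak{g}_{2\theta_i - \theta_j}$, and Lemma \ref{lemm:oneroot} (with $k=i$) guarantees $2\theta_i - \theta_j \notin \Phi$. Hence $(\mathrm{ad}\,e_\beta)^2 e_{\theta_j} = 0$ and the exponential truncates, giving $\mathrm{Ad}(u_t) e_{\theta_j} = e_{\theta_j} + t [e_\beta, e_{\theta_j}]$. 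Summing up,
\[
\mathrm{Ad}(u_t)\Bigl(\sum_{k\neq i} e_{\theta_k}\Bigr) \;=\; \sum_{k\neq i} e_{\theta_k} + t\,[e_\beta, e_{\theta_j}].
\]

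Since $\dim \mathfrak{g}_{\theta_i} = 1$ and both $e_{\theta_i}$ and $[e_\beta, e_{\theta_j}]$ are nonzero vectors there, there is a unique $t \in \K$ such that $t [e_\beta, e_{\theta_j}] = e_{\theta_i}$, and for this value $u_t$ carries $\sum_{k\neq i} e_{\theta_k}$ to $\sum_{k=1}^r e_{\theta_k}$, which is the desired conclusion. There is no real obstacle: the whole argument is a direct exploitation of Lemma \ref{lemm:oneroot} via the standard exponential formula, and the only point requiring any care is noting that the term $k=i$ of Lemma \ref{lemm:oneroot} simultaneously kills the second-order term of the exponential and leaves $e_{\theta_i}$ fixed under $\mathrm{Ad}(u_t)$.
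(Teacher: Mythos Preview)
Your proof is correct and follows essentially the same approach as the paper: both apply the exponential of a root vector for $\beta=\theta_i-\theta_j$ to $\sum_{k\neq i}e_{\theta_k}$ and use Lemma \ref{lemm:oneroot} to control the bracket with each $e_{\theta_k}$. Your version is in fact slightly more explicit, since you spell out why the exponential series for $e_{\theta_j}$ terminates at first order (the paper simply asserts $g\cdot e_{\theta_j}=e_{\theta_i}+e_{\theta_j}$ up to rescaling $e_\beta$), but the idea is identical.
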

\begin{proof}
Let $\beta = \theta_i-\theta_j$ and let $e_\beta \in
\fgg_\beta\setminus\{0\}$. Let $g \in G$ be the exponential of
$e_\beta$. Then, by Lemma \ref{lemm:oneroot}, $g \cdot e_{\theta_k}
= e_{\theta_k}$ for $k \neq j$, and $g \cdot e_{\theta_j} =
e_{\theta_i} + e_{\theta_j}$ (up to multiplying $e_\beta$ by some
suitable scalar). The lemma follows.
\end{proof}

\begin{remark}
\label{rem:oneshortroot} (a) The condition that
$\theta_i-\theta_j\in\Phi$ of Lemma \ref{lemm:exp} can as well be
replaced by $\theta_i+\theta_j\in\Phi$. Indeed, since $\theta_i$ and
$\theta_j$ are orthogonal roots, then $\theta_i-\theta_j$ is a root
if and only if $\theta_i+\theta_j=s_{\theta_j}(\theta_i-\theta_j)$
is a root.

(b) It follows from Lemma \ref{lemm:exp} that, for studying whether
the $G$-orbit of $e=\sum e_{\theta_k}$ is spherical or for computing
the height of $e$, we may assume that no pair $(\theta_i,\theta_j)$
satisfies case ($B_2$ long) of Lemma \ref{rational_combination}.
Indeed, if case ($B_2$ long) occurs within the sequence
$(\theta_k)$, then this sequence may be replaced by a subsequence
for which ($B_2$ long) does not occur, and which yields a generator
of the same nilpotent $G$-orbit.
\end{remark}

\begin{pe}
 ajout ``of height $2$''
\end{pe}
\begin{lemma}
\label{typeC} If $G$ has type $C$, then $e=\sum e_{\theta_k}$ is
always spherical of height $2$.
\end{lemma}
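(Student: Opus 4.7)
The plan is to reduce to an orthogonal sequence that is almost rationally orthogonal, exhibit an explicit $\fsl_2$-triple $(e,h,f)$ with $h=\sum_k\theta_k^\vee$, and then bound the weights of the resulting grading on $\fgg$.

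First, by Lemma \ref{lemm:exp} together with Remark \ref{rem:oneshortroot}, I may replace $\Theta$ by a subsequence giving a representative of the same $G$-orbit and containing no pair satisfying case $(B_2\text{ long})$ of Lemma \ref{rational_combination}. Inspecting the configurations of that lemma in type $C_n$, only case $(B_2\text{ short})$ can still prevent rational orthogonality: in cases $(D_4)$ and $(B_3)$, the required half-sum would have at least four nonzero coordinates in the basis $(\varepsilon_i)$, contradicting the fact that every root of $C_n$ has exactly two nonzero coordinates; $(G_2\text{ both})$ is confined to type $G_2$; and any instance of $(C_3)$ in type $C_n$ necessarily contains a sub-pair of type $(B_2\text{ long})$, hence has been eliminated.

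Next, set $h:=\sum_k\theta_k^\vee$ and $f:=\sum_k f_{\theta_k}$ with suitably normalized $f_{\theta_k}\in\fgg_{-\theta_k}$. Orthogonality of the $\theta_k$'s yields $\theta_l(h)=2$ for every $l$, hence $[h,e]=2e$. The cross terms $[e_{\theta_k},f_{\theta_l}]$ for $k\neq l$ vanish because $\theta_k-\theta_l$ is never a root: this holds by definition of rational orthogonality for rationally orthogonal pairs, and in the surviving $(B_2\text{ short})$ pairs one has $\theta_k-\theta_l=\pm 2(\varepsilon_a-\varepsilon_b)$, which is not a root of $C_n$. Thus $[e,f]=\sum_k\theta_k^\vee=h$ and $(e,h,f)$ is a standard $\fsl_2$-triple.

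Finally, orthogonality combined with the elimination of $(B_2\text{ long})$ forces each index $i$ to occur in at most one $\theta_k$: two distinct orthogonal roots of $C_n$ sharing an index either fail to be orthogonal or form a $(B_2\text{ long})$ pair. Hence $h=\sum_i c_i\varepsilon_i^\vee$ with $c_i\in\{-1,0,1\}$, and every root of $C_n$, being of the form $\pm 2\varepsilon_j$ or $\pm\varepsilon_j\pm\varepsilon_k$, pairs with $h$ to a value of absolute value at most $2$. Therefore $\fgg(m)=0$ for $|m|\geq 3$, giving $e$ height at most $2$; since $e\in\fgg(2)\setminus\{0\}$, the height equals $2$, and $G\cdot e$ is spherical by Proposition \ref{P4}. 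The main obstacle will be the careful case analysis of the first step, confirming that $(B_2\text{ short})$ is indeed the only residual obstruction to rational orthogonality in type $C$.
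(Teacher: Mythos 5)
Your argument is correct and essentially the same as the paper's: both first discard ($B_2$ long) pairs via Remark \ref{rem:oneshortroot}, deduce that the remaining $\theta_k$ have pairwise disjoint supports in the $\varepsilon$-basis, and then verify the height is $2$ by a direct computation (the paper normalizes the roots by the Weyl group to $2\epsilon_1,\ldots,2\epsilon_l,\epsilon_{l+1}-\epsilon_{l+2},\ldots$ and checks $(\mathrm{ad}\,e)^3=0$, whereas you bound the eigenvalues of $\mathrm{ad}\,h$ for $h=\sum_k\theta_k^\vee$, which amounts to the same thing). One small slip in your first, non-load-bearing paragraph: a ($D_4$) configuration of four \emph{short} roots in $C_n$ can have a half-sum with only two nonzero coordinates (e.g.\ $\varepsilon_1\pm\varepsilon_2,\,\varepsilon_3\pm\varepsilon_4$, with half-sum $\varepsilon_1+\varepsilon_3$), so the ``at least four nonzero coordinates'' claim is not true outright; it only becomes true after the elimination of ($B_2$ long) pairs, which any such quadruple necessarily contains -- fortunately your height computation never actually relies on this claim.
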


\begin{proof}
By Remark \ref{rem:oneshortroot}, we may assume that ($B_2$ long)
does not occur within the sequence $(\theta_k)$. Up to reordering
the roots $\theta_k$, we may set an integer $l\in\{0,\ldots,r\}$
such that $\theta_1,\ldots,\theta_l$ are long and
$\theta_{l+1},\ldots,\theta_r$ are short. Up to the action of the
Weyl group, we may assume that $\theta_1=2\epsilon_1$,
$\theta_2=2\epsilon_2$, and so on up to $\theta_l=2\epsilon_l$. If
$l<r$, we may then assume that
$\theta_{l+1}=\epsilon_{l+1}-\epsilon_{l+2}$. Since
$(\theta_{l+1},\theta_{l+2})$ does not satisfy ($B_2$ long),
$\theta_{l+2}$ is of the form $\epsilon_a\pm \epsilon_b$ with $a,b >
l+2$, and we may assume that
$\theta_{l+2}=\epsilon_{l+3}-\epsilon_{l+4}$, and similarly for the
next short roots. We can then check readily that
$(\mathrm{ad}\,e)^3=0$ and invoke Proposition \ref{P4}.
\end{proof}

\begin{lemma}
\label{typeB} Assume that $G$ has type $B$. If one of the following
conditions is satisfied:
\begin{enumerate}
\item All the roots $\theta_1,\ldots,\theta_r$ are short;
\item All the roots $\theta_1,\ldots,\theta_r$ are long and
\begin{itemize}
\item $r=2$, or
\item no pair $(\theta_i,\theta_j)$ satisfies condition {\rm ($B_2$ short)} of Lemma \ref{rational_combination},
\end{itemize}
\end{enumerate}
then $e$ has height $2$.

If one of the following conditions is satisfied:
\begin{enumerate} \setcounter{enumi}{2}
\item There are at least one short root and one long root among $\theta_1,\ldots,\theta_r$, and no pair $(\theta_i,\theta_j)$ satisfies {\rm ($B_2$ short)};
\item $r\geq 3$, all the roots $\theta_1,\ldots,\theta_r$ are long, and there is exactly one pair $(\theta_i,\theta_j)$ which satisfies {\rm ($B_2$ short)},
\end{enumerate}
then $e$ has height $3$.

In any other case, the height of $e$ is at least $4$. In particular,
$e$ is spherical if and only if one of the above conditions {\rm
(1)--(4)} is satisfied.
\end{lemma}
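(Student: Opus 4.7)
My plan is to first reduce to standardized configurations using previous results, then construct an explicit $\mathfrak{sl}_2$-triple containing $e$, and read off the height from its semisimple element.

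First, I apply Remark \ref{rem:oneshortroot}(b) to reduce to configurations with at most one short root: in type $B_n$, any two orthogonal short roots $\epsilon_a, \epsilon_b$ satisfy $\epsilon_a + \epsilon_b \in \Phi$ and hence form a pair of type ($B_2$ long) in Lemma \ref{rational_combination}. Next, using the classification of pairs of orthogonal long roots in $B_n$ (any two either have disjoint $\epsilon$-support or form a ($B_2$ short) pair $\{\epsilon_a + \epsilon_b, \epsilon_a - \epsilon_b\}$) and applying the Weyl group, I standardize $(\theta_k)$ as a disjoint union of singleton long roots, ($B_2$ short) pairs of long roots, and possibly one short root, all on pairwise disjoint sets of $\epsilon$-indices.

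The central step is to set $h := \sum_k \theta_k^\vee$ and $f := \sum_k f_{\theta_k}$ for suitably normalized $f_{\theta_k} \in \mathfrak{g}_{-\theta_k}$, and verify that $(e,h,f)$ is a standard $\mathfrak{sl}_2$-triple. The relations $[h,e]=2e$ and $[h,f]=-2f$ follow from orthogonality; the identity $[e,f]=h$ reduces to the vanishing of the cross-terms $[e_{\theta_j},f_{\theta_k}] \in \mathfrak{g}_{\theta_j-\theta_k}$ for $j \neq k$. After the above reductions, a case inspection shows $\theta_j - \theta_k \notin \Phi$: for blocks with disjoint supports, one obtains three- or four-coordinate combinations; for two roots within the same ($B_2$ short) pair, one obtains $\pm 2\epsilon_\bullet$; none of these lie in $\Phi$. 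Consequently the height of $e$ equals $\max_{\alpha \in \Phi} \alpha(h)$.

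Expressing $h$ in the dual basis: each singleton long root $\epsilon_a \pm \epsilon_b$ contributes $\epsilon_a^* \pm \epsilon_b^*$, each ($B_2$ short) pair contributes $2\epsilon_a^*$ at its pivot index, and a short root $\epsilon_a$ contributes $2\epsilon_a^*$. The maximum of $\alpha(h)$ is attained on a long root $\alpha = \epsilon_i + \epsilon_j$ joining two indices of maximal combined coefficient. Under (1) and (2), at most one index carries coefficient $2$ and it is not joined by a long root to another index of positive coefficient summing to more than $2$, yielding height $2$. Under (3) and (4), exactly one index carries coefficient $2$ together with at least one disjoint index of coefficient $1$, yielding height $3$. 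In every other configuration (the non-spherical cases of Proposition \ref{P.orthogonal.spherical}(2)), two disjoint indices both carry coefficient $2$, so a long root joining them has weight at least $4$. The final sphericity statement then follows from Proposition \ref{P4}. The main obstacle will be the careful enumeration of block configurations in the second step together with the uniform verification of cross-term vanishing in the third step; the height computation itself is a routine extremum in the $\epsilon$-basis once the normal form is in place.
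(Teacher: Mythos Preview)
Your argument is correct, but it follows a different route from the paper's proof. The paper invokes \cite[Theorem 2.3]{Panyushev2}, which characterizes the height of $x\in\mathfrak{so}(n)\subset\mathfrak{sl}(n)$ via the matrix-rank data of $x$, $x^2$, $x^3$: height $2$ iff $x^2=0$ or ($x^3=0$ and $\mathrm{rank}\,x=2$); height $3$ iff $x^3=0$, $\mathrm{rank}\,x\geq 3$, $\mathrm{rank}\,x^2=1$; height $\geq 4$ otherwise. It then runs through the cases by computing these ranks for $e$.

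By contrast, you bypass this external criterion entirely: after reducing to at most one short root via Remark~\ref{rem:oneshortroot}(b) and standardizing to disjoint $\epsilon$-supports, you verify strong orthogonality of the surviving $\theta_k$ (so $h=\sum_k\theta_k^\vee$ sits in an honest $\mathfrak{sl}_2$-triple) and read the height off as $\max_{\alpha\in\Phi}\alpha(h)$, an elementary extremum in the $\epsilon$-coordinates of $h$. The two approaches trade off cleanly: the paper's is shorter once Panyushev's theorem is granted, while yours is self-contained and parallels the coweight computations already used for types $C$, $F_4$, and the simply-laced cases elsewhere in the section. One small point worth making explicit in your write-up is that the reduction of Remark~\ref{rem:oneshortroot}(b) preserves which of the cases (1)--(4) (or ``other'') the sequence falls into, since only short roots are removed and the ($B_2$ short) condition concerns only pairs of long roots.
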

\begin{proof}
To prove the result, we use \cite[Theorem 2.3]{Panyushev2}.
According to this result, an element $x\in \mathfrak{so}(n) \subset
\mathfrak{sl}(n)$:
\begin{itemize}
\item has height $2$ if $x^2=0$ or ($x^3=0$ and $\mathrm{rank}\,x=2$);
\item has height $3$ if $x^2\not=0$, $x^3=0$, $\mathrm{rank}\,x\geq 3$, and $\mathrm{rank}\,x^2=1$;
\item has height at least $4$ otherwise.
\end{itemize}

If all the roots $\theta_1,\ldots,\theta_r$ are short, then $e^3=0$
and $\mathrm{rank}\,e=2$; so the height of $e$ is $2$ in this case.

If all the roots $\theta_k$ are long but no pair
$(\theta_i,\theta_j)$ satisfies ($B_2$ short), then we have $e^2=0$,
hence $e$ has height $2$. If $r=2$ and the two roots are as in
($B_2$ short), then $e^3=0$ and $\mathrm{rank}\,e=2$, so the height
of $e$ is again $2$.

If there are at least one short root and one long root, then $e^2
\neq 0$ and $\mathrm{rank}\,e \geq 4$. Moreover, if no pair
$(\theta_i,\theta_j)$ satisfies ($B_2$ short), then we have
$\mathrm{rank}\,e^2=1$, and so $e$ has height 3 in this case. If at
least one pair $(\theta_i,\theta_j)$ satisfies ($B_2$ short), then
$\mathrm{rank}\,e^2\geq 2$, so that $e$ has height $\geq 4$.

Finally, assume that all the roots $\theta_k$ are long, $r\geq 3$,
and exactly two of the roots are as in ($B_2$ short). Then $e^3=0$,
$\mathrm{rank}\,e^2 =1$, and $\mathrm{rank}\,e \geq 4$, hence $e$
has height $3$. If there are at least two pairs of roots that
satisfy ($B_2$ short), then we have $\mathrm{rank}\,e^2\geq 2$,
hence $e$ has height at least 4. The last claim of the statement
follows from Proposition \ref{P4}.
\end{proof}

\begin{lemma}
\label{typeF} If $G$ has type $F_4$, assume that {\rm ($B_2$ long)}
does not occur within the sequence $(\theta_k)$. Then $e$ is
spherical if and only if $r \leq 2$ or ($r=3$ and all the roots are
long).
\end{lemma}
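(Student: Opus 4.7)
Following the template of Lemma \ref{typeB}, the plan is to carry out a case analysis on $r$ and on the long/short types of the $\theta_k$, computing in each case the height of $e$ via a standard $\fsl_2$-triple and concluding by Proposition \ref{P4}. The first step is a normalization: in the standard realization of $F_4$, where short roots are $\pm\epsilon_i$ and $\tfrac{1}{2}(\pm\epsilon_1\pm\cdots\pm\epsilon_4)$, any two orthogonal short roots have sum or difference equal to a long root, so they satisfy ($B_2$ long). Hence the hypothesis forces the sequence $(\theta_k)$ to contain at most one short root. A direct inspection further shows that any two orthogonal roots of $F_4$ not in ($B_2$ long) configuration are strongly orthogonal (in particular, $\theta_i\pm\theta_j\notin\Phi$ for every pair in $(\theta_k)$), so that $(e,h,f)$ is an $\fsl_2$-triple with $h:=\sum_k\theta_k^\vee$ and $f:=\sum_k f_{\theta_k}$. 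Therefore the height of $e$ equals $\max_{\alpha\in\Phi}\alpha(h)$.

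For the spherical cases, the analysis goes as follows. For $r\leq 1$, the height of $e$ is at most $2$. For $r=2$ with one long and one short root, one verifies in the $\epsilon$-realization that the pair is rationally orthogonal, so Proposition \ref{P.orbits}(a) applies. For $r=2$ with two orthogonal long roots (necessarily in ($B_2$ short) configuration in $F_4$), taking $\theta_1=\epsilon_1+\epsilon_2$ and $\theta_2=\epsilon_1-\epsilon_2$ yields $h=2\epsilon_1$ and $\max_\alpha\alpha(h)=2$. For $r=3$ with three long roots, choosing $\theta_3=\epsilon_3+\epsilon_4$ in addition yields $h=2\epsilon_1+\epsilon_3+\epsilon_4$, whose dominant $W$-conjugate is $(2,1,1,0)$, giving $\max_\alpha\alpha(h)=3$. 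Thus in all three subcases $e$ has height at most $3$ and is spherical.

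For the non-spherical cases, the plan is to exhibit a root $\alpha$ with $\alpha(h)\geq 4$. If $r=3$ with two long roots and one short, take $\theta_3=\epsilon_3$; then $h=2\epsilon_1+2\epsilon_3$ and $\alpha(h)=4$ for $\alpha=\epsilon_1+\epsilon_3$. If $r=4$, a short check shows that no short root of $F_4$ is orthogonal to any triple of mutually orthogonal long roots, so all four $\theta_k$ must be long; taking $\epsilon_1\pm\epsilon_2$ and $\epsilon_3\pm\epsilon_4$ again produces $h=2\epsilon_1+2\epsilon_3$, with the same $\alpha$ witnessing height $\geq 4$. The main technical point is justifying the Weyl-conjugacy reductions: this may be settled by observing that the dominant representative of $h$ under $W(F_4)$, and hence the weighted Dynkin diagram of the orbit of $e$, is determined by the multiset $(\theta_k^\vee)$, so that each configuration class produces a unique nilpotent orbit which is computed from one representative.
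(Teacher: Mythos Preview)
Your proof is correct and follows essentially the same strategy as the paper: a case analysis on $r$ and on the long/short types of the $\theta_k$, computing the height of $e$ via the $\fsl_2$-triple with $h=\sum_k\theta_k^\vee$ and invoking Proposition~\ref{P4}. The main difference is purely notational: you work in the $\epsilon_i$-realization of $F_4$, whereas the paper uses simple-root/fundamental-weight coordinates and reduces step by step via the Weyl group (taking $\theta_1$ to be the highest root, $\theta_2$ the highest root orthogonal to it, etc.). Both approaches establish the same reductions; yours is arguably more transparent for computing $\max_\alpha\alpha(h)$ directly, while the paper's makes the dominant coweight (and hence the weighted Dynkin diagram) explicit in each case.

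One minor point: your closing sentence, that ``the dominant representative of $h$ \dots is determined by the multiset $(\theta_k^\vee)$,'' is imprecise as stated (the multiset of individual coroots does not a priori determine the $W$-orbit of their sum). What actually makes your case analysis complete is that $W(B_4)\subset W(F_4)$ (since the $\epsilon_i$ and the $\epsilon_i-\epsilon_j$ are roots of $F_4$), so that in each configuration all choices of $(\theta_k)$ are conjugate under signed permutations to the representative you compute with. This is implicit in your computations but would be worth stating explicitly in place of the final sentence.
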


\begin{proof}
One feature of type $F_4$ is that, whenever $\theta_i,\theta_j$ are
orthogonal short roots, the sum $\theta_i+\theta_j$ is always a long
root, that is, $(\theta_i,\theta_j)$ satisfies ($B_2$ long).
\begin{pe}
 explication ajoutée:
\end{pe}
Indeed, we may assume, up to the action of the Weyl group, that $\theta_i$
is the heighest short root, namely
$\theta_i = \alpha_1 + 2 \alpha_2 + 3 \alpha_3 + 2 \alpha_2 = \varpi_4$,
and that $\theta_j=\alpha_3$.
Since
we assume that the sequence $(\theta_k)$ contains no pair that
satisfies ($B_2$ long), it follows that $(\theta_k)$ contains at
most one short root.

If $r=1$, then clearly $e$ is spherical. If $r=2$ and
$\theta_1,\theta_2$ are both long, then up to the action of the Weyl
group we may assume that $\theta_1=\theta=\rootfour{2}{3}{4}{2}$ is
the highest root and $\theta_2=\alpha_2+2\alpha_3+2\alpha_4$. Then,
since no integral combination of $\theta_1$ and $\theta_2$ is a
root, an $\fsl_2$-triple for $e$ is given by $h=\theta_1^\vee +
\theta_2^\vee$ and $f=f_{\theta_1}+f_{\theta_2}$ for some
$f_{\theta_1}\in\fg_{-\theta_1}$, $f_{\theta_2}\in\fg_{-\theta_2}$.
One computes that $\theta_1^\vee + \theta_2^\vee=\varpi_4^\vee$ is
dominant, so the height of $e$ is $\theta(\varpi_4^\vee)=2$.

If $r=2$ and $\theta_1$ is long and $\theta_2$ is short, once again,
the action of $W$ allows us to assume that
$\theta_1=\rootfour{2}{3}{4}{2}$ and
$\theta_2=\alpha_2+2\alpha_3+\alpha_4$. The same argument holds, and
$e$ belongs to an $\fsl_2$-triple $(e,h,f)$ such that
$h=\theta_1^\vee+\theta_2^\vee$. Then, we can see that
$h=-\varpi_1^\vee+\varpi_3^\vee=s_1s_2(\varpi_2^\vee)$. Hence, the
height of $e$ is $\theta(\varpi_2^\vee)=3$.

If $r=3$, since there is at most one short root, we may assume that
$\theta_1$ and $\theta_2$ are long, and that we have again
$\theta_1=\rootfour{2}{3}{4}{2}$ and
$\theta_2=\alpha_2+2\alpha_3+2\alpha_4$. Then
$\theta_1^\vee=\varpi_1^\vee$ and
$\theta_2^\vee=-\varpi_1^\vee+\varpi_4^\vee$, so $\theta_3$ must be
a linear combination of $\alpha_2$ and $\alpha_3$. Up to the action
of $W$, there are two cases: if $\theta_3$ is short, then we may
assume that $\theta_3=\alpha_2+\alpha_3$, and then
$\theta_1^\vee+\theta_2^\vee+\theta_3^\vee=-2\varpi_1^\vee+2\varpi_2^\vee=s_1(2\varpi_1^\vee)$.
Thus $e$ has height $\theta(2\varpi_1^\vee)=4$, and it is not
spherical. If $\theta_3$ is long, we may assume that
$\theta_3=\alpha_2+2\alpha_3$. Then $\theta_1^\vee + \theta_2^\vee +
\theta_3^\vee=-\varpi_1^\vee+\varpi_3^\vee=s_1s_2(\varpi_2^\vee)$,
so $e$ has height $3$ and it is spherical in this case.

If $r=4$, we know that there are at least three long roots. So, as
in the previous paragraph, we may assume that
$\theta_1=\rootfour{2}{3}{4}{2}$,
$\theta_2=\alpha_2+2\alpha_3+2\alpha_4$ and
$\theta_3=\alpha_2+2\alpha_3$. Then $\theta_4$ must be equal to
$\pm\alpha_2$, say $\alpha_2$. We recover the coweight
$\theta_1^\vee+\theta_2^\vee+\theta_3^\vee+\theta_4^\vee=-2\varpi_1^\vee+2\varpi_2^\vee=s_1(2\varpi_1^\vee)$.
Hence $e$ has height $4$, so it is not spherical.
\end{proof}

A convenient way to produce orthogonal roots is to use Harish-Chandra chain cascade of roots in the version defined by Kostant \cite[\S1]{kostant}. However, in
this way we get a sequence of strongly orthogonal roots \cite[Lemma
1.6]{kostant} but not necessarily a sequence of rationally
orthogonal roots. For example, in type $E_6$,
$(\theta,\alpha_1+\alpha_3+\alpha_4+\alpha_5+\alpha_6,\alpha_3+\alpha_4+\alpha_5,\alpha_4)$
is such an example, corresponding to case ($D_4$) in Lemma
\ref{rational_combination}. The examples for cases ($B_2$ short) and
($G_2$ both) given in Remark \ref{rem:5cases} are also chain
cascades, and one can check that these three cases are the only
cases of Lemma \ref{rational_combination} that can occur in a chain
cascade. Our final remark in this line of ideas is the following:

\begin{remark}
\label{remark_gandini} Let $(\theta_k)$ be a sequence obtained by
chain cascade. If the nilpotent element $\sum e_{\theta_k}$ has
height $2$, then the element $\sum \theta_k^\vee$ is dominant, by
\cite[Proposition 3.10]{gandini}.
\end{remark}

\begin{example}
\label{exam:nil-elements} {\rm (a)} Assume that
$G=\mathrm{SL}_n(\K)$. A nilpotent matrix $e\in\mathfrak{sl}_n(\K)$
belongs to a spherical nilpotent orbit if and only if $e^2=0$ (see
\cite{Panyushev1}). For every
$r\in\{0,\ldots,\lfloor\frac{n}{2}\rfloor\}$, the set
$\mathcal{O}^{(r)}:=\{e\in\mathfrak{sl}_n(\mathbb{K}): e^2=0,\
\mathrm{rank}\,e=r\}$ consists of a single $\SL_n(\K)$-orbit. Let
$\Phi=\{\epsilon_i-\epsilon_j:1\leq i\not=j\leq n\}$ be the usual
root system. Then for every permutation $\sigma$ of
$\{1,\ldots,r\}$, the roots
\[\theta_i^\sigma:=\epsilon_i-\epsilon_{n+1-\sigma(i)}\quad\mbox{(for $i=1,\ldots,r$)}\]
form a sequence of rationally orthogonal roots, and for every choice
of root vectors
$e_{\theta_i^\sigma}\in\mathfrak{g}_{\theta_i^\sigma}\setminus\{0\}$,
the element
\[e_\sigma:=\sum_{i=1}^re_{\theta_i^\sigma}\]
is a representative of $\mathcal{O}^{(r)}$ which is of the form
described in Proposition \ref{P.orbits}\,(b), i.e., the
corresponding coweight $\sum_{i=1}^r(\theta_i^\sigma)^\vee$ is
dominant. Thus the sequence of rationally orthogonal roots described
in Proposition \ref{P.orbits}\,(b) is not unique for each orbit.
Note however that the coweight $\sum_{i=1}^r(\theta_i^\sigma)^\vee$
is independent of $\sigma$, which agrees with Remark \ref{R2}. For
$\sigma=\mathrm{id}$, the sequence $(\theta_i^\mathrm{id})$ is a
chain cascade.

{\rm (b)} As another example, we give the sequences of roots
$(\theta_i)$ given by Proposition \ref{P.orbits} in the case of the
exceptional group of type $E_7$. In view of \cite[table
p.\,130]{Collingwood-McGovern}, $\mathfrak{g}$ has five nontrivial
spherical nilpotent orbits in this case, whose types are
respectively $A_1$, $2A_1$, $3A_1$, $3A_1$, $4A_1$. We use the
following numbering of the simple roots/vertices of the Dynkin
diagram:
\[
\dynkin[labels={\alpha_1,\alpha_2,\alpha_3,\alpha_4,\alpha_5,\alpha_6,\alpha_7}]
E7
\]
For $i\in\{1,\ldots,7\}$, we set $s_i=s_{\alpha_i}$. Note that in a
simply laced case (as $E_7$), two roots are orthogonal if and only
if they are strongly orthogonal.

Let $\theta_1$ be the highest root, which is equal to the
fundamental weight $\varpi_1$. Thus, we have $\theta_1^\vee =
\varpi_1^\vee$. The root vector $e_{\theta_1}$ belongs to the orbit
of type $A_1$ (which is the minimal nilpotent orbit of
$\mathfrak{g}$).

Let $\theta_2$ be the highest root which is orthogonal to
$\theta_1$, namely
$\alpha_2+\alpha_3+2\alpha_4+2\alpha_5+2\alpha_6+\alpha_7$. It is
also equal to $-\varpi_1+\varpi_6$, so that $\theta_1^\vee +
\theta_2^\vee = \varpi_6^\vee$. The vector
$e_{\theta_1}+e_{\theta_2}$ belongs to the orbit of type $2A_1$.

The orthogonal of $\theta_1$ and $\theta_2$ consists of roots with
no term in $\alpha_1$ nor $\alpha_6$. Since this orthogonal is
disconnected, we have three choices for the third root in the chain
cascade according to Kostant's algorithm \cite{kostant}. One is
$\theta_3''=\alpha_7=2\varpi_7-\varpi_6$. By Lemma
\ref{rational_combination}, $(\theta_1,\theta_2,\theta_3'')$ is a
set of rationally orthogonal roots, and $\theta_1^\vee +
\theta_2^\vee + (\theta_3'')^\vee = 2 \varpi_7^\vee$. The vector
$e_{\theta_1}+e_{\theta_2}+e_{\theta_3''}$ belongs to the nilpotent
orbit  labeled $(3A_1)''$ in \cite[table
p.\,130]{Collingwood-McGovern}.

Note that up to now, the heights of the nilpotent elements were $2$
and, as predicted by Remark \ref{remark_gandini}, the coweights
$\sum_i \theta_i^\vee$ were dominant. We now consider some nilpotent
elements of height $3$ and $\sum_i \theta_i^\vee$ will not be
dominant.

The second choice for the third root in the chain cascade is
$\theta_3' = \alpha_2 + \alpha_3 + 2 \alpha_4 + \alpha_5$. Then, we
have $\theta_1^\vee + \theta_2^\vee + (\theta_3')^\vee =
-\varpi_1^\vee+\varpi_4^\vee$, which is not a dominant coweight. In
this case, we  consider the sequence
$s_3s_1(\theta_1),s_3s_1(\theta_2),s_3s_1(\theta_3')$, which is also
a set of rationally orthogonal roots, and which satisfies
$s_3s_1(\theta_1)^\vee + s_3s_1(\theta_2)^\vee +
s_3s_1(\theta_3')^\vee = \varpi_3^\vee$. The  vector
$e_{s_3s_1(\theta_1)}+e_{s_3s_1(\theta_2)}+e_{s_3s_1(\theta_3')}$
belongs to the orbit labeled $(3A_1)'$ in \cite[table
p.\,130]{Collingwood-McGovern}.

Finally, the three possible chain cascades of length 4 are
$(\theta_1,\theta_2,\theta_3',\alpha_i)$ with $i\in\{2,3,5\}$. We
have
$\theta_1^\vee+\theta_2^\vee+(\theta_3')^\vee+\alpha_2^\vee=-\varpi_1^\vee+2\varpi_2^\vee$
(which is not a dominant coweight). Letting $s=s_7s_6s_5s_4s_3s_1$,
we get
$s(\theta_1^\vee)+s(\theta_2^\vee)+s((\theta'_3)^\vee)+s(\alpha_2^\vee)=\varpi_2^\vee+\varpi_7^\vee$
(which is a dominant coweight). The element
$e_{s(\theta_1)}+e_{s(\theta_2)}+e_{s(\theta'_3)}+e_{s(\alpha_2)}$
belongs to the orbit labeled $4A_1$.

In the same way, we have
$\theta_1^\vee+\theta_2^\vee+(\theta_3')^\vee+\alpha_5^\vee=-\varpi_1^\vee+2\varpi_5^\vee-\varpi_6^\vee=s_1s_3s_4s_2s_6s_7(\varpi_2^\vee+\varpi_7^\vee)$,
hence we recover the same orbit labeled $4A_1$.

The situation is different for the last possible chain cascade
$(\theta_1,\theta_2,\theta_3',\alpha_3)$. These roots are not
rationally orthogonal as they satisfy condition $(D_4)$ of Lemma
\ref{rational_combination}. In fact, in this case, we have
$\theta_1^\vee+\theta_2^\vee+(\theta_3')^\vee+\alpha_3^\vee=-2\varpi_1^\vee+2\varpi_3^\vee=s_1(2\varpi_1^\vee)$,
hence the nilpotent element
$e_{\theta_1}+e_{\theta_2}+e_{\theta_3'}+e_{\alpha_3}$ belongs to
the orbit labeled $A_2$ in \cite[table
p.\,130]{Collingwood-McGovern} (which is of height $4$).

\end{example}

\section{Symmetric subgroup associated to a spherical nilpotent orbit}

\label{SI.4}

In this section, we assume that the orbit $G\cdot e$ is spherical
and relate the subgroups $L_Z^0$ and $L_Z$ of $Z$ to a symmetric
subgroup of $L$. We use the notation introduced in Section
\ref{SI.1}.

\begin{definition}
\label{D.sigma} Let $e\in\mathfrak{g}$ be a nilpotent element and
let $\mathfrak{s}=(e,h,f)$ be a standard triple. As in Section
\ref{SI.1}, we consider a cocharacter $\tau:\K^*\to G$ such that
$\tau'(1)=h$. We assume that $h$ belongs to the Lie algebra
$\mathfrak{t}=\mathrm{Lie}(T)$ of the standard torus $T\subset G$,
so that $\tau(\K^*)$ is contained in $T$.

We consider the Lie algebra $\mathfrak{sl}_2(\mathfrak{s})$ linearly
generated by $e$, $h$, and $f$, and we consider the subgroup
$\mathrm{SL}_2(\mathfrak{s})\subset G$ with Lie algebra
$\mathfrak{sl}_2(\mathfrak{s})$. The torus $\tau(\K^*)$ is a maximal
torus of $\SL_2(\fs)$.

We denote by $n_\mathfrak{s}\in \mathrm{SL}_2(\mathfrak{s})$ an
element in the normalizer of $\tau(\K^*)$, and not in $\tau(\K^*)$
(thus, a representative of the nontrivial element of the Weyl group
of $\mathrm{SL}_2(\mathfrak{s})$). By a standard calculation in
$\SL_2(\K)$, we get
\begin{equation}
\label{n-tau} n_\fs\tau(t)n_\fs^{-1}=\tau(t^{-1})\ \mbox{ for all
$t\in\K^*$}.
\end{equation}

We denote by $\sigma:G \to G$ the conjugation by $n_\mathfrak{s}$
and we use the same notation for the adjoint action of
$n_\mathfrak{s}$ on $\mathfrak{g}$.
\end{definition}

We make a first observation:
\begin{lemma}
\label{L.inclusion}
\begin{itemize}
\item[\rm (a)] The map $\sigma$ preserves the subgroup $L$; in fact, $\sigma:L\to L$ is an involution.
\item[\rm (b)] We have $Z\cap L\subset L^\sigma$ and $\fz_\fgg(e)\cap\fl\subset \fl^\sigma$.
\end{itemize}
\end{lemma}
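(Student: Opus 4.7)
The plan is to handle (a) by direct computation and to reduce (b) to identifying $L \cap Z$ with the full centralizer of $\SL_2(\fs)$.

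For (a), I will use the characterization $L = Z_G(\tau)$. Given $g \in L$, a short calculation combining (\ref{n-tau}) with $\tau(t) g = g \tau(t)$ yields $\tau(t) \sigma(g) \tau(t)^{-1} = n_\fs \tau(t^{-1}) g \tau(t^{-1})^{-1} n_\fs^{-1} = \sigma(g)$, so $\sigma(L) \subset L$. For the involution property, I will observe that $n_\fs$ normalizes the torus $\tau(\K^*)$ inside $\SL_2(\fs)$, and the standard matrix calculation in $\SL_2(\K)$ forces $n_\fs^2 = \tau(-1)$. Since $\tau(\K^*) \subset Z(L)$, the element $\tau(-1)$ commutes with every $g \in L$, whence $\sigma^2|_L = \mathrm{id}_L$.

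For the group inclusion in (b), I will prove the stronger statement $L \cap Z = Z_G(\SL_2(\fs))$; the inclusion $L \cap Z \subset L^\sigma$ is then immediate because $n_\fs \in \SL_2(\fs)$. One containment is clear. For the other, let $g \in L \cap Z$; then $\mathrm{Ad}(g)$ fixes both $e$ and $h$, so $g$ commutes with $\exp(\K e)$ and with $\tau(\K^*)$. Since $\SL_2(\fs)$ is generated by these two subgroups together with $\exp(\K f)$, the task reduces to showing $\mathrm{Ad}(g) f = f$. This is the main obstacle, which I plan to overcome by a uniqueness argument: $\mathrm{Ad}(g) f$ satisfies $[e, \mathrm{Ad}(g) f] = \mathrm{Ad}(g)[e,f] = h$ and $[h, \mathrm{Ad}(g) f] = -2\,\mathrm{Ad}(g) f$, so it completes $(e,h)$ to a standard triple; but any such completion is unique, because the difference of two candidates would lie in $\fgg(-2) \cap \ker(\mathrm{ad}\,e)$, i.e.\ it would be a highest-weight vector of weight $-2$ in the $\fsl_2(\fs)$-module $\fgg$, which is impossible.

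The Lie algebra inclusion $\fz_\fgg(e) \cap \fl \subset \fl^\sigma$ admits a parallel and slightly simpler argument: any $x \in \fz_\fgg(e) \cap \fgg(0)$ satisfies $[e,x] = [h,x] = 0$, hence is a highest-weight vector of weight $0$ in the $\fsl_2(\fs)$-module $\fgg$. The only irreducible $\fsl_2$-module containing such a vector is the trivial one, so $x$ generates a trivial submodule and is fixed by the whole action of $\SL_2(\fs)$; in particular $\sigma(x) = \mathrm{Ad}(n_\fs)\,x = x$.
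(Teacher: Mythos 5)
Your proof is correct and follows essentially the same route as the paper: show $\sigma$ preserves $L=Z_G(\tau)$ via the relation $n_\fs\tau(t)n_\fs^{-1}=\tau(t^{-1})$, note that $n_\fs^2$ lies in $\tau(\K^*)\subset Z(L)$, and deduce (b) from the identification $Z\cap L=Z_G(\SL_2(\fs))$ together with $n_\fs\in\SL_2(\fs)$. The only difference is that where the paper cites Kostant's lemma (Collingwood--McGovern, Lemma 3.4.4) for $Z_G(e)\cap Z_G(h)=Z_G(\mathfrak{sl}_2(\fs))$, you prove it inline via the uniqueness of $f$ (using $\fg(-2)\cap\ker(\mathrm{ad}\,e)=0$), and you give a direct $\fsl_2$-weight argument for the Lie algebra inclusion instead of deriving it from the group-level one; both substitutes are valid.
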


\begin{proof}
{\rm (a)} In view of (\ref{n-tau}), $\sigma$ preserves the torus
$\tau(\K^*)$, hence $\sigma$ also preserves the centralizer of
$\tau(\K^*)$, which is precisely $L$. Since the Weyl group of
$\SL_2(\fs)$ has only two elements, we must have
$n_\fs^2\in\tau(\K^*)$, hence $n_\fs^2$ is contained in the center
of $L$. This implies that $\sigma^2(g)=g$ for all $g\in L$.

{\rm (b)} We have $Z\cap L=Z_G(e)\cap
Z_G(h)=Z_G(\mathfrak{sl}_2(\fs))$ (see \cite[Lemma
3.4.4]{Collingwood-McGovern}). Moreover
$Z_G(\mathfrak{sl}_2(\fs))=Z_G(\SL_2(\fs))$, hence $Z\cap
L=Z_G(\SL_2(\fs))$. Since $n_s\in \SL_2(\fs)$, this implies that
$Z\cap L\subset L^\sigma$. This inclusion yields the inclusion of
Lie subalgebras $\fz_\fgg(e)\cap\fl\subset \fl^\sigma$.
\end{proof}

\begin{remark}
\label{sigma-L-L} Whereas the map $\sigma:G\to G$ a priori depends
on the choice of $n_\fs$ within a $\tau(\K^*)$-coset, its
restriction $\sigma:L\to L$ is independent of this choice, because
$L=Z_G(\tau)$.
\end{remark}

\begin{proposition}
\label{P.symmetric-space} Let $e = \sum_{i=1}^r e_{\theta_i}$ be as
in Proposition \ref{P.orbits}\,{\rm (a)}; in particular its orbit
$G\cdot e$ is spherical. Then, we have $\fz_\fgg(e)\cap\fl =
\fl^\sigma$ and $L_Z^0=(L^\sigma)^0\subset L_Z\subset L^\sigma$.
\end{proposition}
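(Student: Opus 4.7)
The inclusions $\fz_\fgg(e)\cap\fl\subset\fl^\sigma$ and $L_Z\subset L^\sigma$ are given for free by Lemma~\ref{L.inclusion}(b), so the plan is to establish the reverse inclusion $\fl^\sigma\subset\fz_\fgg(e)$ at the level of Lie algebras. Once this equality is in hand, the identity $L_Z^0=(L^\sigma)^0$ will follow because both sides are connected algebraic subgroups of $L$ sharing the same Lie algebra $\fl^\sigma$; the inclusion $(L^\sigma)^0\subset L_Z$ itself comes by connectedness from $\fl^\sigma\subset\fz_\fgg(e)$.

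The key idea is to decompose $\fg$ under the adjoint action of $\SL_2(\fs)$, writing
$$\fg=\bigoplus_j V(d_j),$$
where $V(d)$ denotes the simple $\SL_2$-module of dimension $d+1$. This decomposition refines the $\tau$-grading $\fg=\bigoplus_i\fg(i)$, since each $V(d)$ has one-dimensional weight spaces in the weights $-d,-d+2,\ldots,d$. The sphericity hypothesis enters decisively via Proposition~\ref{P4}\,(ii): the height of $e$ is at most~$3$, so $d_j\leq 3$ for every $j$, and consequently only summands with $d_j$ even---that is, $d_j\in\{0,2\}$---meet $\fl=\fg(0)$ nontrivially.

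It then remains to identify the $\sigma$-action on these zero weight lines. A direct computation in the model $V(d)=\mathrm{Sym}^d(\K^2)$ shows that $n_\fs$ acts on the zero weight vector of $V(d)$ by the scalar $(-1)^{d/2}$, a value which is insensitive to the choice of $n_\fs$ within its $\tau(\K^*)$-coset since $\tau(\K^*)$ acts trivially on zero weight vectors. Hence $\fl^\sigma$ is precisely the sum of the zero weight spaces of the summands with $d_j=0$, i.e.\ $\fl^\sigma=\fg^{\SL_2(\fs)}$. On the other hand, a vector in $\fz_\fgg(e)\cap\fl$ lies simultaneously in the zero weight space and in the kernel of $\mathrm{ad}(e)$, which forces it to be both a zero weight and a highest weight vector, hence to live in some $V(d_j)$ with $d_j=0$; so $\fz_\fgg(e)\cap\fl=\fg^{\SL_2(\fs)}$ as well, and equality holds. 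The main hinge, and the precise place where the height bound is used, is the exclusion of summands $V(d)$ with $d\equiv 0\pmod 4$ and $d\geq 4$: these would contribute to $\fl^\sigma$ without lying in $\fz_\fgg(e)$, and it is exactly the hypothesis that $e$ has height at most~$3$ that rules them out.
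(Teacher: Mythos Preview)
Your proof is correct and takes a genuinely different route from the paper's. The paper argues via the root space decomposition: it first shows that $\sigma$ normalizes the maximal torus $T$ (this is precisely where rational orthogonality of the $\theta_i$ is used, to ensure that $Z_G(S\cdot\tau(\K^*))=T$), so that $\sigma$ acts on $\Phi$ as the product of reflections $s_{\theta_1}\cdots s_{\theta_r}$; it then compares $\fl^\sigma$ and $\fz_\fg(e)\cap\fl$ root space by root space, distinguishing the roots $\alpha$ with $\sigma(\alpha)=\alpha$ from the pairs $\{\alpha,\sigma(\alpha)\}$, with a separate verification on $\ft$.

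Your argument bypasses all of this by working directly with the $\SL_2(\fs)$-module decomposition $\fg=\bigoplus_j V(d_j)$ and computing the sign $(-1)^{d/2}$ of $n_\fs$ on zero weight lines. This is cleaner and in fact more general: it uses only that $e$ has height at most $3$ (equivalently, sphericity via Proposition~\ref{P4}), not the specific presentation $e=\sum_i e_{\theta_i}$ with rationally orthogonal roots. What the paper's approach buys in exchange is the explicit information that $\sigma$ stabilizes $T$ and acts on roots as $s_{\theta_1}\cdots s_{\theta_r}$; this concrete description is exploited in Example~\ref{E4.5} to identify the symmetric pair $(L,L^\sigma)$ explicitly, and it motivates the rational-orthogonality setup of Section~\ref{SI.3}. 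Your argument gives no such explicit handle on $\sigma|_T$, though it establishes the proposition itself with less machinery.
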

\begin{proof}
Recall that $L$ is the centralizer of $\tau(\K^*)$. Thus,
$$\fl = \ft \oplus \bigoplus_{\substack{\alpha\in\Phi \\ \mathrm{s.t.}\,\scal{\alpha,h}=0}} \fg_\alpha.$$
We claim that
\begin{equation}
\label{T-preserved} \sigma(T)=T,\quad \mbox{i.e., $n_\fs$ belongs to
$N_G(T)$}.
\end{equation}
Indeed, we have already noted that $\sigma$ preserves the torus
$\tau(\K^*)$. Let $S=\bigcap_i\ker \theta_i$. Then $S\subset Z\cap
T\subset Z\cap L$, which implies that $\sigma$ fixes every element
of $S$ (by Lemma \ref{L.inclusion}). Thereby $\sigma$ preserves the
torus $S\tau(\K^*)$ and so it also preserves its centralizer. As
noted in the proof of Proposition \ref{P.orbits}, we have
$Z_G(S\tau(\K^*))=T$. Whence (\ref{T-preserved}).

Relation (\ref{T-preserved}) implies that $\sigma$ induces an
involution on the root system $\Phi$ (that we denote by the same
letter). In fact this action coincides with the action of the Weyl
group element
\[s_{\theta_1}\cdots s_{\theta_r}:\alpha\mapsto \alpha-\sum_i\scal{\alpha,\theta_i^\vee}\theta_i.\]

We have the following equality:
\begin{equation}
\label{E.l} \fl^\sigma = \bigoplus_\alpha \fg_\alpha^\sigma \oplus
\bigoplus_{\{\alpha,\beta\}} {(\fg_\alpha \oplus
\fg_{\beta})}^\sigma \oplus \ft^\sigma\ ,
\end{equation}
where the first sum is over the roots $\alpha$ of $L$ such that
$\sigma(\alpha)=\alpha$, and the second sum is over the pairs
$\{\alpha,\beta\}$ of roots of $L$ with $\beta = \sigma(\alpha)$.
Let $\alpha$ be a root of $L$. We consider two cases.

\smallskip

First, if $\sigma(\alpha)=\alpha$, then for all $i$ we have
$\scal{\alpha,\theta_i^\vee}=0$. Assume first that there exists $i$
such that $\alpha+\theta_i$ is a root. Then
$\alpha-\theta_i=s_{\theta_i}(\alpha+\theta_i)$ is also a root. We
claim that there can be only one such integer $i$. In fact, let us
assume to the contrary that there are two integers $i,j$ such that
$\alpha + \theta_i$ and $\alpha + \theta_j$ are roots. Then,
considering the $(\fsl_2(\theta_i) \times \fsl_2(\theta_j))$-module
generated by $\fg_\alpha$, we deduce that $\alpha+\theta_i+\theta_j$
is a root. But then, the square lengths of $\alpha,\alpha+\theta_i$
and $\alpha+\theta_i+\theta_j$ are three different numbers, a
contradiction.

Let $i$ be the unique integer such that $\alpha+\theta_i$ is a root.
It follows that the $\fsl_2(\theta_i)$-module generated by
$\fg_\alpha$ is isomorphic to the adjoint module $\fsl_2(\theta_i)$.
Thus the same holds for the corresponding $\SL_2(\theta_i)$-module.
By a direct computation in $\SL_2(\K)$, we deduce that the
restriction of the nontrivial element of the Weyl group of
$\SL_2(\theta_i)$ to $\fg_\alpha$ is $-\mathrm{id}_{\fg_\alpha}$.
Since the elements of the Weyl group of $\SL_2(\theta_j)$ act
trivially on $\fg_\alpha$ for $j \neq i$, we deduce that $\sigma$
acts as $-\mathrm{id}_{\fg_\alpha}$ on $\fg_\alpha$. Therefore,
$\fg_\alpha^\sigma = \{0\}$ in this case.

If for all integers $i$, $\alpha+\theta_i$ is not a root, we get
$\fg_\alpha \subset \fz_\fgg(e)$. In both cases, we deduce from Lemma
\ref{L.inclusion} that we have $\fz_{\fg}(e)\cap\fg_\alpha =
\fg_\alpha^\sigma$.

\smallskip

Second, if $\sigma(\alpha) \neq \alpha$, let us set $\beta =
\sigma(\alpha)$. First, assume that for all integers $i$, we have
$|\scal{\alpha,\theta_i^\vee}| \leq 1$. Then, the number of integers
$i$ such that $\scal{\alpha,\theta_i^\vee} \neq 0$ is $2$. Indeed,
it must be even since $\scal{\alpha,\theta_i^\vee}$ belongs to
$\{-1,0,1\}$ for all $i$, and $\sum_i\langle
\alpha,\theta_i^\vee\rangle=0$. Moreover, it must be less than $4$
because otherwise $\alpha$ and four such roots would generate an
infinite root subsystem of type $D_4^{(1)}$. Assume now that there
exists an integer $i$ such that $|\scal{\alpha,\theta_i^\vee}| \geq
2$. Then, since $\sum_j \scal{\alpha,\theta_j^\vee} = 0$, there are
two cases. Either there is another integer $j \neq i$ such that
$|\scal{\alpha,\theta_j^\vee}| \geq 2$, in which case the root
subsystem generated by $\theta_i,\alpha$ and $\theta_j$ is infinite,
which is absurd. Or there are at least two integers $j$ such that
$|\scal{\alpha,\theta_j^\vee}|=1$ and, once again, this is absurd.

We therefore have shown that there are exactly two integers $k$ such
that $\scal{\alpha,\theta_k^\vee} \neq 0$, and that for these two
integers we have $|\scal{\alpha,\theta_k^\vee}|=1$. We let $i,j$ be
the integers such that $\scal{\alpha,\theta_i^\vee} = 1$ and
$\scal{\alpha,\theta_j^\vee} = -1$. We have $\beta = \sigma(\alpha)
= \alpha - \theta_i + \theta_j$. Moreover, $s_{\theta_i}(\alpha) =
\alpha - \theta_i$ is a root, so $\alpha + \theta_i$ is not a root.
Note also that if $k\not=i,j$, then $\alpha+\theta_k$ cannot be a
root, otherwise $\alpha+\theta_k,-\theta_k,-\theta_i,\theta_j$ would
generate an infinite root subsystem of type $B_3^{(1)}$. Similarly,
$\beta + \theta_k$ is not a root unless $k=i$. It follows that
$$ [e,(\fg_\alpha \oplus \fg_{\beta})] = \fg_{\alpha + \theta_j} = \fg_{\beta+\theta_i}\, .$$
Thus, $\dim\fz_{\fg}(e)\cap(\fg_\alpha \oplus \fg_{\beta})=1$. On
the other hand, $(\fg_\alpha \oplus \fg_{\beta})^\sigma$ is also
$1$-dimensional. By Lemma \ref{L.inclusion}, we get $(\fg_\alpha
\oplus \fg_{\beta})^\sigma = \fz_{\fg}(e)\cap(\fg_\alpha \oplus
\fg_{\beta})$.

\smallskip

Finally, we claim that $\ft^\sigma = \fz_\fgg(e)\cap\ft$. In fact,
$\ft^\sigma$ is the orthogonal of all the roots $\theta_i$, because
$\sigma$ is the product of the reflections defined by
$\theta_1,\ldots,\theta_r$. Since $e_{\theta_i}$ has weight
$\theta_i$, $\fz_\fgg(e)\cap\ft$ is also the orthogonal of the roots
$\theta_i$. Whence the claimed equality.

\smallskip

Altogether we have shown the desired equality $\fz_\fgg(e)\cap\fl =
\fl^\sigma$. This equality implies $L_Z^0=(L^\sigma)^0$. The
inclusion $L_Z\subset L^\sigma$ is already noted in Lemma
\ref{L.inclusion}.
\end{proof}

\begin{remark}
The fact that $\fz_\fgg(e)\cap\fl$ is a symmetric subalgebra of $\fl$
is already shown in \cite[\S3.3]{Panyushev1} (with a different
proof). We also refer to \cite[Appendix B]{Bravi-Cupit-Foutou} where
the symmetric pairs $(L,L^\sigma)$ corresponding to the nilpotent
elements $e$ of height 3 are explicitly described.
\end{remark}

\begin{example}
\label{E4.5} {\rm (a)} We first consider the case of nilpotent
orbits in type $A_l$. Let $e$ be a nilpotent element of height $2$
and rank $r$ in the Lie algebra $\fsl_{l+1}$: this means that the
endomorphism $e$ satisfies $e^2=0$ and that the partition giving the
size of the Jordan blocks is $(2^r,1^{l+1-2r})$. The coweight $h$ in
an $\fsl_2$-triple $(e,h,f)$ is $\varpi_r^\vee+\varpi_{l+1-r}^\vee$
(see also Example~\ref{exam:nil-elements} (a)). Thus $L$ is the
standard Levi subgroup corresponding to the root subsystem of type
$A_{r-1}\times A_{r-1}\times A_{l-2r}$ generated by the simple roots
$\alpha_1,\ldots,\alpha_{r-1},\alpha_{r+1},\ldots,\alpha_{l-r},\alpha_{l+2-r},\ldots,\alpha_l$.

The element $e$ can be described as a sum of root vectors using the
procedure given in Proposition \ref{P.orbits}: it is in the orbit of
the sum $e_{\theta_1}+\cdots+e_{\theta_r}$ where
$\theta_i=\alpha_i+\cdots+\alpha_{l+1-i}$. Moreover, the involution
is given by the nontrivial Weyl group element of the subgroup with
Lie algebra linearly generated by $(e,h,f)$. This element acts on
the roots as the element $w=s_{\theta_1} \cdots s_{\theta_r}$ of the
Weyl group. Thus, we have
$w(\alpha_i)=\alpha_i-\theta_i+\theta_{i+1}=-\alpha_{l+1-i}$ if
$1\leq i\leq r-1$, and $w(\alpha_i)=\alpha_i$ if $r+1\leq i\leq
l-r$. In this way, we recover the known fact that $L$ is of type
$A_{r-1} \times A_{r-1} \times A_{l-2r}$ while $L^\sigma$ is of type
$A_{r-1} \times A_{l-2r}$ (with a diagonal embedding of $A_{r-1}$ in
the factor $A_{r-1} \times A_{r-1}$).

{\rm (b)} We now consider the case of the nilpotent orbit labeled
$3A_1$ in type $E_6$, which corresponds to the dominant coweight
$\varpi_4^\vee$ (see \cite[table p. 129]{Collingwood-McGovern}). A
representative of this orbit is obtained by chain cascade, by
letting $\theta_1 =
\alpha_1+2\alpha_2+2\alpha_3+3\alpha_4+2\alpha_5+\alpha_6$ be the
highest root, $\theta_2 =
\alpha_1+\alpha_3+\alpha_4+\alpha_5+\alpha_6$, and $\theta_3 =
\alpha_3+\alpha_4+\alpha_5$. Then
$\theta_1^\vee+\theta_2^\vee+\theta_3^\vee=-\varpi_2^\vee+\varpi_3^\vee+\varpi_5^\vee=s_{\alpha_2}s_{\alpha_4}(\varpi_4^\vee)$.
Thus, the considered nilpotent orbit contains the element
$e_{\theta_1'}+e_{\theta_2'}+e_{\theta_3'}$ for
$\theta_1',\theta_2',\theta_3'$ defined by $\theta_i' =
s_{\alpha_4}s_{\alpha_2}(\theta_i)$. Here, the standard Levi
subgroup $L$ corresponds to the root subsystem of type $A_2\times
A_2\times A_1$ generated by
$\alpha_1,\alpha_2,\alpha_3,\alpha_5,\alpha_6$.

As above, the involution $\sigma$ acts on the roots as the element
$w$ of the Weyl group defined by $w=s_{\theta_3'} \circ
s_{\theta_2'} \circ s_{\theta_1'}
=s_{\alpha_4}s_{\alpha_2}s_{\theta_3}s_{\theta_2}s_{\theta_1}s_{\alpha_2}s_{\alpha_4}$.
By a straightforward computation, we see that
$w(\alpha_1)=-\alpha_6$, $w(\alpha_3)=-\alpha_5$, and
$w(\alpha_2)=\alpha_2$. Thus, $L^\sigma$ is a ``diagonal'' subgroup
of $L$ of type $A_2 \times A_1$.
\end{example}

In particular, in both examples, we see that the pair $(L,L^\sigma)$
fits the combinatorial setting described in Section \ref{cox}.

\section{Focus on nilpotent elements of height 2}

\label{SI.5}

We again consider the decomposition $Z=L_Z\ltimes U_Z$ of
Proposition \ref{P-Levi-Z}. In the previous section, we have shown
that, whenever $e$ belongs to a spherical nilpotent orbit (which is
equivalent to saying that $e$ is a nilpotent element of height at
most 3; see Proposition \ref{P4}), the subgroup $L_Z$ can be
realized as a symmetric subgroup of the Levi subgroup $L$ (possibly
up to certain connected components). In the present section, we
focus on the unipotent subgroup $U_Z$. We point out the following
fact:

\begin{proposition}
\label{P-Z-height2} Assume that $e$ is a nilpotent element of height
2 (i.e., $(\mathrm{ad}\,e)^3=0$, $(\mathrm{ad}\,e)^2\neq 0$). Then
$U_Z=U$, so that $Z=L_Z\ltimes U$ and $Z^0=L_Z^0\ltimes U$ in this
case.
\end{proposition}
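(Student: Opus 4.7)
The plan is to work at the Lie algebra level first and then exponentiate. The height 2 hypothesis means by definition that the grading $\mathfrak{g}=\bigoplus_{i\in\mathbb{Z}}\mathfrak{g}(i)$ of Section \ref{SI.1} satisfies $\mathfrak{g}(i)=0$ for every $|i|\geq 3$. In particular $\mathfrak{u}=\mathfrak{g}(1)\oplus\mathfrak{g}(2)$.

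First I would show that $\mathfrak{u}\subset\mathfrak{z}_\mathfrak{g}(e)$. Since $e\in\mathfrak{g}(2)$, for any $x\in\mathfrak{g}(i)$ with $i\in\{1,2\}$ the bracket $[x,e]$ lies in $\mathfrak{g}(i+2)$, and $i+2\geq 3$ forces this space to vanish. Summing over $i=1,2$ gives $[\mathfrak{u},e]=0$, that is $\mathfrak{u}\subset\mathfrak{z}_\mathfrak{g}(e)=\mathrm{Lie}(Z)$.

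Next I would pass to the group. The subgroup $U$ is connected and unipotent, so the inclusion of Lie algebras $\mathfrak{u}\subset\mathrm{Lie}(Z)$ lifts to the inclusion $U\subset Z^0\subset Z$. Hence $U_Z=U\cap Z=U$. Applying Proposition \ref{P-Levi-Z}\,{\rm (a)} and {\rm (b)} yields the two asserted decompositions $Z=L_Z\ltimes U$ and $Z^0=L_Z^0\ltimes U$.

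There is no real obstacle here: the argument reduces the statement to the elementary observation that the height 2 hypothesis truncates the grading above degree $2$, so that $\mathrm{ad}(e)$ automatically annihilates the positive part of the grading.
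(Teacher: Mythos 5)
Your proof is correct and follows the same route as the paper: both reduce the statement to the observation that height $2$ truncates the grading so that $\mathrm{ad}(e)$ kills $\mathfrak{g}(1)\oplus\mathfrak{g}(2)=\mathfrak{u}$, then use connectedness of $U$ to pass from $\mathfrak{u}\subset\mathfrak{z}_\mathfrak{g}(e)$ to $U\subset Z^0$ and conclude via Proposition \ref{P-Levi-Z}. No gaps.
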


\begin{proof}
We consider the grading
$\mathfrak{g}=\bigoplus_{i\in\mathbb{Z}}\mathfrak{g}(i)$ determined
by $h$ (as in Section \ref{SI.1}). From the representation theory of
$\fsl_2$, we know that $\mathrm{ad}\,e$ restricts to a map from
$\fg(i)$ to $\fg(i+2)$ for every $i\in\mathbb{Z}$, moreover
$(\mathrm{ad}\,e)^i$ induces a bijection between $\fg(-i)$ and
$\fg(i)$ for every $i\geq 0$. The assumption that $e$ has height 2
implies that $\mathfrak{g}(i)=0$ whenever $|i|>2$, and we get the
inclusion
\[
\mathrm{Lie}(U)=\bigoplus_{i>0}\fg(i)=\fg(1)\oplus\fgg(2)\subset
\fz_\fgg(e).
\]
Since $U$ is connected, this yields $U\subset Z^0$, hence $U=U_Z$.
\end{proof}

\begin{corollary}
\label{CI.5.1} Assume that $e$ is a nilpotent element of height 2.
Then every $Z$-orbit $\mathcal{Z}$ of the flag variety
$\mathcal{B}=G/B$ contains an element which is fixed by the torus
$\tau(\K^*)$. Moreover, the subset of fixed points
$\mathcal{Z}^\tau$ is a single $L_Z$-orbit.
\end{corollary}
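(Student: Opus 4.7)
The plan is to prove both assertions by combining the decomposition $Z = L_Z \ltimes U$ from Proposition \ref{P-Z-height2} with the Bruhat decomposition of $G$ relative to $P$ and $B$. Up to conjugating $B$ by a suitable element of $G$, I may assume that the maximal torus $T$ containing $\tau(\K^*)$ satisfies $T \subset B \subset P$; this reduction does not affect the statement since it only amounts to relabeling the $Z$-orbits on $\mathcal{B}$.

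For the first assertion, starting from any $gB \in \mathcal{Z}$, I would invoke the Bruhat decomposition $G = \bigsqcup_{w \in W_P \backslash W} P \dot{w} B$ to write $g = p \dot{w} b$ with $p \in P$, $\dot{w} \in N_G(T)$, and $b \in B$. Using $P = U \ltimes L$, I would further decompose $p = u \ell$ with $u \in U$ and $\ell \in L$, so that $gB = u \ell \dot{w} B$. The point $\dot{w} B$ is $\tau$-fixed since $\dot{w}^{-1} \tau(t) \dot{w} \in T \subset B$ for every $t \in \K^*$, and because $\tau(\K^*)$ lies in the center of $L = Z_G(\tau(\K^*))$, the element $\ell$ commutes with $\tau$, so $\ell \dot{w} B$ remains $\tau$-fixed. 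Finally $u^{-1} \in U \subset Z$ by Proposition \ref{P-Z-height2}, so $u^{-1} \cdot gB = \ell \dot{w} B$ is the desired $\tau$-fixed point lying in $\mathcal{Z}$.

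For the second assertion, I would pick two points $x, y \in \mathcal{Z}^\tau$, take $z \in Z$ with $y = z x$, and write $z = \ell u$ with $\ell \in L_Z$, $u \in U$. Applying $\tau(t)$ to both sides of $y = \ell u x$, and using $\tau(t) x = x$, $\tau(t) y = y$, together with the fact that $\ell \in L$ commutes with $\tau(t)$, I would obtain $\ell u x = \ell \tau(t) u \tau(t)^{-1} x$ for every $t \in \K^*$, hence $\tau(t) u \tau(t)^{-1} \cdot x = u \cdot x$. Letting $t \to 0$ and using the defining property $\lim_{t \to 0} \tau(t) u \tau(t)^{-1} = 1_G$ for $u \in U$ together with continuity of the $G$-action on $\mathcal{B}$, I would deduce $u x = x$, whence $y = \ell x$ with $\ell \in L_Z$. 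This shows that $\mathcal{Z}^\tau$ is a single $L_Z$-orbit.

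No serious obstacle is expected. The main delicate points are the reduction to $B \subset P$, which is a standard conjugation argument, and the limit argument $t \to 0$ in the second part, which passes through immediately thanks to continuity of the morphism $G \times \mathcal{B} \to \mathcal{B}$ applied to the constant family $\{\tau(t) u \tau(t)^{-1} \cdot x\}_{t \in \K^*}$.
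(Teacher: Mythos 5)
Your proof is correct and follows essentially the same route as the paper, which deduces the corollary from Lemma \ref{L-tau-fixed-points}: there too one writes an element of $P$ as $u\ell$ and uses $U\subset Z$ to move to a $\tau$-fixed point of the ambient $P$-orbit, and then obtains the second assertion by conjugating $u\in U$ by $\tau(t)$ and letting $t\to 0$. The only cosmetic difference is that you re-derive the existence of a $\tau$-fixed point in each $P$-orbit from the Bruhat decomposition, whereas the paper cites this from Propositions \ref{P-Bruhat} and \ref{P1.1}.
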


\begin{proof}
This fact follows from the more general assertion claimed in Lemma
\ref{L-tau-fixed-points} below.
\end{proof}

\begin{remark}
\label{R-no-tau-fixedpoint} {\rm (a)} If a $Z$-orbit
$\mathcal{Z}\subset\mathcal{B}$ contains elements fixed by $\tau$,
then we have
\[\lim_{t\to 0}\tau(t)\cdot p\in \mathcal{Z}\quad\mbox{for all $p\in\mathcal{Z}$.}\]
Indeed, writing $p=(\ell u)\cdot p_0$ with $\ell\in L_Z$, $u\in
U_Z$, $p_0\in\mathcal{Z}^\tau$, we get
\[\tau(t)\cdot p=\ell\,(\tau(t)u\tau(t)^{-1})\cdot p_0\to \ell\cdot p_0\in\mathcal{Z}^\tau\quad\mbox{as $t\to 0$.}\]

{\rm (b)} When $e$ has height $3$, the conclusion of Corollary
\ref{CI.5.1} is not valid in general, as shown by the following
example. Let $\Phi^+=\{\epsilon_i\pm\epsilon_j:1\leq i<j\leq
3\}\cup\{\epsilon_i:1\leq i\leq 3\}$ be the usual system of positive
roots of $G=\mathrm{SO}_7(\K)$, and let
$\alpha_1=\epsilon_1-\epsilon_2$, $\alpha_2=\epsilon_2-\epsilon_3$,
$\alpha_3=\epsilon_3$ be the simple roots. Take nonzero root vectors
$e_{\epsilon_1}\in\mathfrak{g}_{\epsilon_1}$ and
$e_{\epsilon_2+\epsilon_3}\in\fg_{\epsilon_2+\epsilon_3}$ and let us
consider the nilpotent element
$e=e_{\epsilon_1}+e_{\epsilon_2+\epsilon_3}$. A cocharacter
associated to $e$ is
$\tau(t)=\epsilon_1^\vee+(\epsilon_2+\epsilon_3)^\vee=\varpi_1^\vee+\varpi_3^\vee$.
Thus $e$ belongs to a spherical nilpotent orbit in view of
Proposition \ref{P.orbits}.

We consider the orthogonal form $\omega$ on $\K^7$ defined on the
standard basis by
\[\omega(\varepsilon_i,\varepsilon_j)=\left\{\begin{array}{ll}1 & \mbox{if $i+j=8$,} \\ 0 & \mbox{otherwise}\end{array}\right.\]
and we realize $G$ as the subgroup of elements $g\in\SL_7(\K)$ which
preserve this bilinear form. The flag variety $\mathcal{B}$ can be
realized as the variety of isotropic flags
\[\{(V_1\subset V_2\subset V_3\subset \K^7):\dim V_i=i,\ V_3\subset V_3^\perp\},\]
where $\perp$ stands for the orthogonal with respect to $\omega$. We
consider the flags $F=(\langle
\varepsilon_1+\varepsilon_2\rangle,\langle
\varepsilon_1+\varepsilon_2,\varepsilon_5\rangle,\langle
\varepsilon_1,\varepsilon_2,\varepsilon_5\rangle)$ and
$F_0=(\langle\varepsilon_2\rangle,\langle\varepsilon_2,\varepsilon_5\rangle,\langle
\varepsilon_1,\varepsilon_2,\varepsilon_5\rangle)$. Note that
$\tau(t)\cdot F=(\langle
t^2\varepsilon_1+t\varepsilon_2\rangle,\langle
t^2\varepsilon_1+t\varepsilon_2,t^{-1}\varepsilon_5\rangle,\langle
t^2\varepsilon_1,t\varepsilon_2,t^{-1}\varepsilon_5\rangle)$ for all
$t\in\K^*$, so that $\lim_{t\to 0}\tau(t)\cdot F=F_0$. However, it
can be computed that $\dim Z\cap\mathrm{Stab}_G(F)=5$ whereas $\dim
Z\cap\mathrm{Stab}_G(F_0)=6$, hence $F$ and $F_0$ belong to distinct
$Z$-orbits (of distinct dimensions). In view of part (a), this
implies that the $Z$-orbit of $F$ contains no element fixed by
$\tau$.
\end{remark}

\newpage

\part{Structure of orbits on the flag variety for the action of certain spherical subgroups obtained through parabolic induction}

\label{part2}

In this part of the paper, we assume that $H\subset G$ is a subgroup
obtained through parabolic induction, i.e.,
\begin{equation}
\label{H} H=M\ltimes U
\end{equation}
where
\begin{itemize}
\item $P\subset G$ is a parabolic subgroup and $P=L\ltimes U$ is a Levi decomposition ($U$ is the unipotent radical, $L$ is a Levi factor of $P$);
\item $M\subset L$ is a spherical subgroup.
\end{itemize}
This is also the situation considered in \cite[Lemma 7]{Brion},
where the weak Bruhat order of the $H$-orbits of the flag variety is
described.

Although it is not assumed in this part, we are mostly interested in
the case where $(L^\sigma)^0\subset M\subset L^\sigma$, for some
involution $\sigma\in\mathrm{Aut}(L)$. For instance, Propositions
\ref{P.symmetric-space} and \ref{P-Z-height2} tell us that the role
of $H$ can be played by the groups $Z$ and $Z^0$ where $Z:=Z_G(e)$
is the stabilizer of a nilpotent element of height 2.

The main goal of this part is to describe the structure of the
orbits of $H$ on the flag variety. In this respect, our main result
is Theorem \ref{T4.1} which shows that each $H$-orbit of
$\mathcal{B}$ has a structure of algebraic affine bundle over an
$M$-orbit of the flag variety $\mathcal{B}_L$ of the Levi subgroup
$L$. This result is shown in Section \ref{section-7}. Before that,
in Section \ref{section-6}, we review some facts on the structure of
parabolic orbits on the flag variety.

Let us mention that the structure of orbits for the action of a
general spherical subgroup on the flag variety has been studied in
\cite[Proposition 2.2]{brion-rational}: in particular, it is shown
that each closed orbit is a flag variety for the spherical subgroup.

\section{Notation and short review on parabolic orbits}

\label{section-6}

We fix a maximal torus $T\subset G$. We also fix a Borel subgroup
$B\subset G$ which contains $T$, and we consider the flag variety
$\mathcal{B}=G/B$. Let $W$ be the Weyl group $N_G(T)/T$ of $G$. The choice of
$B$ determines a set of simple reflections of $W$. Let $\ell(w)$
denote the length of an element $w\in W$ with respect to this set of
simple reflections.

We consider a parabolic subgroup $P\subset G$ and a Levi
decomposition $P=L\ltimes U$. There is no loss of generality in
assuming that $B$ is contained in $P$ and $T$ is contained in $L$.
Then, we can find a cocharacter $\tau:\K^*\to T$ such that $P$, $L$,
$U$ are given by
\begin{equation}
\label{1.1} \left\{
\begin{array}{l}
P=P(\tau):=\{g\in G:\lim\limits_{t\to 0}\tau(t)g\tau(t)^{-1}\mbox{ exists}\},\\
U=U(\tau):=\{g\in G:\lim\limits_{t\to 0}\tau(t)g\tau(t)^{-1}=1_G\}, \\
L=L(\tau):=\{g\in G:\tau(t)g\tau(t)^{-1}=g\ \ \forall
t\in\mathbb{K}^*\}=Z_G(\tau).
\end{array}
\right.
\end{equation}

Since $T\subset B\cap L$, the subgroup $B_L:=B\cap L$ is a Borel
subgroup of $L$.\ We denote $\mathcal{B}_L=L/B_L$.

Let $W_P=N_P(T)/T$. Let $\lW{P}$ be a set of representatives of
minimal length for the quotient $W_P\backslash W$.

 We let $P$ act on $\mathcal{B}$, and
it is well known that there are finitely many orbits for this
action:

\begin{proposition}
\label{P-Bruhat}
\begin{itemize}
\item[\rm (a)]
$\displaystyle \mathcal{B}=\bigsqcup_{w\in \lW{P}}\mathcal{P}_w$
where we denote $\mathcal{P}_w:=PwB/B$;
\item[\rm (b)]
$\dim \mathcal{P}_w=\ell(w)+\dim\mathcal{B}_L$ for all $w\in
\lW{P}$.
\end{itemize}
\end{proposition}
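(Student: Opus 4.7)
The plan is to deduce both statements directly from the Bruhat decomposition of $G$, regrouped along the right cosets $W_P\backslash W$. As a first step, I would record two standard ingredients: (i) the Bruhat decomposition $G=\bigsqcup_{v\in W}BvB$ restricts to $P=\bigsqcup_{v\in W_P}BvB$, since $B\cap L$ is a Borel of $L$ and $U\subset B$; and (ii) the length-additivity property for minimal length coset representatives, namely $\ell(vw)=\ell(v)+\ell(w)$ for every $v\in W_P$ and $w\in\lW{P}$. This second fact implies the familiar Bruhat cell multiplication rule $BvB\cdot BwB=BvwB$ for such $v,w$.

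Combining these two inputs, I would obtain, for each $w\in\lW{P}$,
\[
\mathcal{P}_w=PwB/B=\bigsqcup_{v\in W_P}BvwB/B,
\]
a disjoint union of Schubert cells of $\mathcal{B}$. Part (a) then follows from the fact that every $u\in W$ admits a unique decomposition $u=vw$ with $v\in W_P$ and $w\in\lW{P}$: the partition $\mathcal{B}=\bigsqcup_{w\in\lW{P}}\mathcal{P}_w$ is simply the regrouping of the usual Schubert cell decomposition of $\mathcal{B}$ along the cosets $W_P\backslash W$, which gives both disjointness and covering at once.

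For part (b), I would use that each Schubert cell $BvwB/B$ has dimension $\ell(vw)=\ell(v)+\ell(w)$. Since $\mathcal{P}_w$ is a $P$-homogeneous space and hence irreducible, its dimension equals the maximum of the dimensions of the Schubert cells it contains. That maximum is attained at $v=w_{P,0}$, the longest element of $W_P$, giving $\dim\mathcal{P}_w=\ell(w_{P,0})+\ell(w)$. Finally, $\ell(w_{P,0})$ counts the positive roots of $L$, which equals $\dim\mathcal{B}_L$.

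The statement is classical, so I do not anticipate any genuine obstacle. The only nontrivial input I would quote without proof is the length-additivity $\ell(vw)=\ell(v)+\ell(w)$ for $v\in W_P$ and $w\in\lW{P}$, together with the resulting cell multiplication rule; both are standard consequences of the exchange condition in $(W,S)$.
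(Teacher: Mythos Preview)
Your argument is correct. The paper does not actually supply a proof of this proposition: it is introduced with the phrase ``it is well known that there are finitely many orbits for this action'' and stated without proof, serving only as background for Proposition~\ref{P1.1} and Proposition~\ref{P4.2}. Your derivation via the Bruhat decomposition, length additivity for minimal coset representatives, and the identification $\ell(w_{P,0})=\dim\mathcal{B}_L$ is the standard one and fills in exactly what the paper leaves implicit.
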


In particular, each $P$-orbit $\mathcal{P}_w$ contains a fixed point
of $T$ (namely $wB/B$), which is a fortiori fixed by the subtorus
$\tau(\K^*)$. The next statement describes the structure of the
orbits, by relying on the fixed point set of $\tau(\K^*)$.

\begin{proposition}
\label{P1.1} Let $\mathcal{B}^\tau\subset\mathcal{B}$ be the
subvariety of fixed points of $\tau(\K^*)$. Let
$\mathcal{P}\subset\mathcal{B}$ be a $P$-orbit. Let
$\mathcal{P}^\tau=\mathcal{P}\cap\mathcal{B}^\tau$. Take any
$p_0=g_0B\in\mathcal{P}^\tau$. Thus $g_0Bg_0^{-1}$ is a Borel
subgroup of $G$ which contains the torus $\tau(\K^*)$. This
guarantees that $(g_0Bg_0^{-1})\cap L$ is a Borel subgroup of $L$.
\begin{itemize}
\item[\rm (a)]
The map $L\to \mathcal{P}^\tau$, $\ell\mapsto \ell\cdot p_0$ is
surjective and induces an isomorphism of varieties
\[L/(g_0Bg_0^{-1})\cap L\to \mathcal{P}^\tau.\]
In particular $\mathcal{P}^\tau$ is a projective variety, hence
closed in $\mathcal{B}$. It is a connected component of
$\mathcal{B}^\tau$ and every connected component of
$\mathcal{B}^\tau$ is of this form.
\item[\rm (b)] There is a unique algebraic affine bundle $\phi_\mathcal{P}:\mathcal{P}\to\mathcal{P}^\tau$ such that
\[\phi_\mathcal{P}(\ell u\cdot p_0)=\ell\cdot p_0\ \mbox{ for all
$\ell\in L$, all $u\in U$}.\] It does not depend on the choice of
$p_0\in\mathcal{P}^\tau$.
\end{itemize}
\end{proposition}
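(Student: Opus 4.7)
Plan: Set $B_0 := g_0 B g_0^{-1}$. By hypothesis, $B_0$ is a Borel subgroup containing $\tau(\K^*)$, and $L^{p_0} := L\cap B_0$ is a Borel of $L$. I will also denote $U^{p_0}:=U\cap B_0$ and $P^{p_0}:=P\cap B_0$; the Levi decomposition of $P$ restricts to give $P^{p_0} = L^{p_0}\ltimes U^{p_0}$.

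For part (a), the key step is surjectivity of the orbit map $L\to\mathcal{P}^\tau$. Given $p\in\mathcal{P}^\tau$, I write $p=\ell u\cdot p_0$ with $\ell\in L$, $u\in U$ using $\mathcal{P}=P\cdot p_0$. Using that $\ell$ centralizes $\tau$ and that $\tau$ fixes $p_0$, the identity $\tau(t)\cdot p=p$ for all $t\in\K^*$ reduces to $(\tau(t)u\tau(t)^{-1})\cdot p_0=u\cdot p_0$. Letting $t\to 0$ gives $u\cdot p_0=p_0$, hence $p=\ell\cdot p_0$. The stabilizer of $p_0$ in $L$ is $L^{p_0}$, so one gets an equivariant isomorphism $L/L^{p_0}\xrightarrow{\sim}\mathcal{P}^\tau$. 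Since $L^{p_0}$ is a Borel of $L$, this target is a flag variety of $L$, hence smooth, irreducible, and projective (in particular closed in $\mathcal{B}$). To upgrade $\mathcal{P}^\tau$ to a full connected component of $\mathcal{B}^\tau$, I compare tangent spaces at $p_0$: the $\tau$-fixed part of $T_{p_0}\mathcal{B}=\fgg/\mathrm{Lie}(B_0)$ equals $\fl/(\fl\cap\mathrm{Lie}(B_0))$, which coincides with the tangent space to $L\cdot p_0$ at $p_0$. Hence $\mathcal{P}^\tau$ is open in the smooth variety $\mathcal{B}^\tau$; being also closed and connected, it is a connected component. Conversely, given a component $C\subset\mathcal{B}^\tau$ and any $p_0\in C$, applying the above to $\mathcal{P}:=P\cdot p_0$ yields $\mathcal{P}^\tau=C$.

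For part (b), the natural candidate is the Bialynicki--Birula retraction $\phi_\mathcal{P}(p):=\lim_{t\to 0}\tau(t)\cdot p$, which is well defined since $\mathcal{B}$ is smooth and projective. For $p=\ell u\cdot p_0$ the computation in (a) gives $\phi_\mathcal{P}(p)=\ell\cdot p_0$, proving the required formula and showing that $\phi_\mathcal{P}$ takes values in $\mathcal{P}^\tau$. Equivalently, the Levi projection $P\to L$, $\ell u\mapsto\ell$, descends (because $P^{p_0}=L^{p_0}\ltimes U^{p_0}$) to an algebraic morphism $\mathcal{P}=P/P^{p_0}\to L/L^{p_0}=\mathcal{P}^\tau$ coinciding with $\phi_\mathcal{P}$. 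This realization exhibits $\mathcal{P}$ as the associated bundle $L\times^{L^{p_0}}(U/U^{p_0})$ over $\mathcal{P}^\tau$; the fibre $U/U^{p_0}$ is an affine space because $U$ and $U^{p_0}$ are connected unipotent subgroups of $G$ in characteristic zero (one may even choose an $L^{p_0}$-stable complement $\mathfrak{c}\subset\mathfrak{u}$ of $\mathrm{Lie}(U^{p_0})$ and use the exponential to get an explicit isomorphism $\mathfrak{c}\xrightarrow{\sim}U/U^{p_0}$). Uniqueness of $\phi_\mathcal{P}$ is immediate since $\mathcal{P}=LU\cdot p_0$, and independence of $p_0\in\mathcal{P}^\tau$ follows from the intrinsic limit description (or from $L$-equivariance, any other fixed point being of the form $\ell_0\cdot p_0$).

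The main obstacle I anticipate is proving that $\mathcal{P}^\tau$ is a full connected component of $\mathcal{B}^\tau$ rather than merely contained in one; the tangent space comparison above is the cleanest way to settle this without invoking Bialynicki--Birula as a black box.
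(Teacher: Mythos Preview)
Your proof is correct, and the surjectivity argument for $L\to\mathcal{P}^\tau$ via the limit $\tau(t)u\tau(t)^{-1}\to 1_G$ is essentially the paper's. There are, however, two genuine differences worth noting.

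For the connected-component claim in (a), the paper avoids your tangent-space comparison entirely. It simply observes that as $\mathcal{P}$ ranges over the finitely many $P$-orbits, the subsets $\mathcal{P}^\tau$ are closed (each being a flag variety of $L$, as you also show), connected, pairwise disjoint, and cover $\mathcal{B}^\tau$; hence they are exactly the connected components. This is shorter and needs no smoothness of $\mathcal{B}^\tau$ or identification of $(T_{p_0}\mathcal{B})^\tau$. Your argument is perfectly valid, but the partition argument is the more economical route here.

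For (b), the paper invokes Bialynicki--Birula's theorem as a black box to get the affine-bundle structure over each component of $\mathcal{B}^\tau$, then checks $\phi^{-1}(\mathcal{P}^\tau)=\mathcal{P}$ and the formula $\phi(\ell u\cdot p_0)=\ell\cdot p_0$ by the same limit computation. Your alternative---realizing $\mathcal{P}\cong P/P^{p_0}\cong L\times^{L^{p_0}}(U/U^{p_0})$ via the Levi projection---is more explicit and bypasses Bialynicki--Birula. It does require checking that $P^{p_0}=L^{p_0}\ltimes U^{p_0}$ (which follows from $\tau$-stability of $B_0$ and the same limit trick), that $L\to L/L^{p_0}$ is Zariski-locally trivial (true since Borel subgroups are special), and that $U/U^{p_0}$ is an affine space (standard for unipotent quotients in characteristic zero). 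Your parenthetical about exhibiting $\mathfrak{c}\xrightarrow{\sim}U/U^{p_0}$ via the exponential is not needed for the argument and would require a bit more care to justify in general; the Rosenlicht-type statement that $U/U^{p_0}\cong\mathbb{A}^d$ suffices. The upshot: the paper's route is quicker, yours makes the bundle structure concrete and independent of the Bialynicki--Birula machinery.
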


In particular, part (a) of the Proposition says that the number of
connected components of $\cB^\tau$ is equal to the number of
$\cP$-orbits in $\cB$, which is $\big | W/W_P \big |$.

\begin{proof}
The fact that $(g_0Bg_0^{-1})\cap L$ is a Borel subgroup of $L$ is
Corollary 22.4 in \cite{humphreys}. We first show {\rm (a)}. Let
$p_1\in\mathcal{P}^\tau$. Thus $p_0,p_1$ both belong to
$\mathcal{P}$, hence there is $g\in P$ such that $p_1=g\cdot p_0$.
Moreover, we can write $g=\ell u$ with $\ell\in L$ and $u\in U$.
Using (\ref{1.1}), we have
\[p_1=\tau(t)\cdot p_1=\tau(t)\ell u\cdot p_0=\tau(t)\ell u\cdot (\tau(t)^{-1}\cdot p_0)=\ell(\tau(t)u\tau(t)^{-1})\cdot p_0\stackrel{t\to 0}{\longrightarrow} \ell\cdot p_0\]
hence $p_1=\ell\cdot p_0$. We have shown that the map $L\to
\mathcal{P}^\tau$, $\ell\mapsto \ell\cdot p_0$ is surjective. In
addition, the isotropy group of $p_0(=g_0B)$ (in $L$) is clearly
$L\cap(g_0Bg_0^{-1})$ and the fibers of this map are the cosets of
$L\cap(g_0Bg_0^{-1})$. We deduce that the map $L\to\mathcal{P}^\tau$
induces an isomorphism of varieties $L/(g_0Bg_0^{-1})\cap
L\stackrel{\sim}{\to}\mathcal{P}^\tau$. Since $(g_0Bg_0^{-1})\cap L$
is a Borel subgroup of $L$, we deduce that $\mathcal{P}^\tau$ is a
projective variety. In particular it is closed is $\mathcal{B}$.
Since $L$ is connected, it follows that the subsets
$\mathcal{P}^\tau$ (attached to the various $P$-orbits $\mathcal{P}$
of $\mathcal{B}$) are closed, connected, pairwise disjoint, and they
cover $\mathcal{B}^\tau$; hence they are exactly the connected
components of $\mathcal{B}^\tau$. This shows {\rm (a)}.

Next we show {\rm (b)}. Consider the map
\[\phi:\mathcal{B}\to\mathcal{B}^\tau,\ p\mapsto \lim_{t\to 0}\tau(t)\cdot p.\]
It follows from Bialynicki-Birula's theorem \cite{Bialynicki-Birula}
that the restriction of the map $\phi$ over each connected component
of $\mathcal{B}^\tau$ is an algebraic affine bundle, i.e.,
$\phi|_{\phi^{-1}(\mathcal{P}^\tau)}:\phi^{-1}(\mathcal{P}^\tau)\to
\mathcal{P}^\tau$ is an algebraic affine bundle for every $P$-orbit
$\mathcal{P}$. Let $p\in \mathcal{P}$. We can write $p=\ell u\cdot
p_0$ with $\ell\in L$ and $u\in U$. Using (\ref{1.1}) and the fact
that $p_0$ is fixed by $\tau$, we have
\[\tau(t)\cdot p=\tau(t)\cdot(\ell u\cdot p_0)=\tau(t)\ell u\cdot(\tau(t)^{-1}\cdot p_0)=\ell(\tau(t)u\tau(t)^{-1})\cdot p_0\stackrel{t\to 0}{\longrightarrow}\ell\cdot p_0\]
hence
\[\phi(p)=\phi(\ell u\cdot p_0)=\ell\cdot p_0\in\mathcal{P}^\tau.\]
We get in particular $\phi(\mathcal{P})\subset\mathcal{P}^\tau$, and
this implies $\phi^{-1}(\mathcal{P}^\tau)=\mathcal{P}$. Hence $\phi$
restricts to an algebraic affine bundle
$\phi_\mathcal{P}:\mathcal{P}\to\mathcal{P}^\tau$. Moreover the
previous calculation shows that this map is such that
$\phi_\mathcal{P}(\ell u\cdot p_0)=\ell\cdot p_0$ for all
$(\ell,u)\in L\times U$, whenever $p_0\in \mathcal{P}^\tau$, and
$\phi_\mathcal{P}$ is necessarily unique for satisfying the latter
formula, because every element of $\mathcal{P}$ is of the form $\ell
u\cdot p_0$.
\end{proof}

The following observation will be useful.

\begin{lemma}
\label{L4.0} For every $w\in \lW{P}$, we have $(wBw^{-1})\cap
L=B_L$.
\end{lemma}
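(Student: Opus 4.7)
The plan is to reduce the lemma to a standard statement about how minimal length coset representatives act on roots. First, I would observe that both $(wBw^{-1})\cap L$ and $B_L=B\cap L$ are Borel subgroups of $L$ containing the maximal torus $T$. For $B_L$ this is by assumption; for $(wBw^{-1})\cap L$ it follows from the same result cited just before (Corollary 22.4 of \cite{humphreys}) once one notes that $T\subset wBw^{-1}$, because $w$ normalizes $T$. Two Borel subgroups of $L$ containing $T$ coincide if and only if they determine the same positive root system inside the root system $\Phi_L\subset\Phi$ of $L$.

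Next, I would identify these two positive root systems. Writing $\Phi^+$ for the positive roots of $G$ determined by $B$, the positive root system of $B_L$ inside $\Phi_L$ is $\Phi_L^+:=\Phi_L\cap\Phi^+$, while that of $(wBw^{-1})\cap L$ is $\Phi_L\cap w(\Phi^+)$. So the lemma is equivalent to the equality
\[
\Phi_L\cap w(\Phi^+)=\Phi_L\cap\Phi^+,
\]
or equivalently $w^{-1}(\Phi_L^+)\subset\Phi^+$ (taking negatives then gives $w^{-1}(\Phi_L^-)\subset\Phi^-$, so the other inclusion follows).

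The final step is to invoke the defining property of $\lW{P}$. Since $w$ is the minimal length representative of the coset $W_Pw$, we have $\ell(sw)>\ell(w)$ for every simple reflection $s$ generating $W_P=W_L$, which by a standard computation is equivalent to $w^{-1}(\alpha)\in\Phi^+$ for every simple root $\alpha$ of $L$. Now any $\beta\in\Phi_L^+$ can be expanded as a non-negative integer combination $\beta=\sum c_\alpha\alpha$ of simple roots of $L$, so $w^{-1}(\beta)=\sum c_\alpha\,w^{-1}(\alpha)$ is a non-negative integer combination of positive roots of $G$; being itself a root (the image of the root $\beta$ under $w^{-1}$), it must be a positive root. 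This gives the desired inclusion.

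There is no real obstacle here; the only care needed is to match the conventions for $\lW{P}$ (minimal length in the right cosets $W_P\backslash W$) with the correct statement about positive roots, namely that $w^{-1}$ (rather than $w$) sends $\Phi_L^+$ into $\Phi^+$. Once that is pinned down, the proof is a one-line application of the combinatorics of parabolic cosets in Coxeter groups combined with the Borel-pair identification in the first step.
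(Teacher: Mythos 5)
Your proof is correct and follows essentially the same route as the paper's: both identify $(wBw^{-1})\cap L$ and $B_L$ as Borel subgroups of $L$ containing $T$, and both reduce the equality to the fact that minimality of $w$ in $W_Pw$ forces $w^{-1}(\alpha)>0$ for every simple root $\alpha$ of $L$ (the paper phrases this as a proof by contradiction via $\ell(s_\alpha w)<\ell(w)$, while you argue directly and extend from simple roots to all of $\Phi_L^+$). The convention check you flag at the end is exactly the right point of care, and you resolve it correctly.
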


\begin{proof}
Since $wBw^{-1}$ is a Borel subgroup of $G$ which contains $T$, we
know that $(wBw^{-1})\cap L$ is a Borel subgroup of $L$. To show the
lemma, it suffices to show that $B_L\subset (wBw^{-1})\cap L$.
Assume by contradiction $B_L\not\subset(wBw^{-1})\cap L$. Since both
subgroups are Borel subgroups of $L$ which contain the maximal torus
$T$, it follows that there is a simple root $\alpha$ in the root
system of $L$ such that $\mathfrak{g}_\alpha$ is not contained in
$\mathrm{Lie}(wBw^{-1})$. Hence $w^{-1}(\alpha)<0$. This implies
$\ell(w^{-1}s_\alpha)=\ell(w^{-1})-1$, hence $\ell(s_\alpha
w)<\ell(w)$. Since $s_\alpha\in W_P$, this contradicts the
assumption that $w$ is of minimal length among its coset $W_Pw$.
\end{proof}

Finally, the following statement summarizes the conclusions of
Propositions \ref{P-Bruhat}--\ref{P1.1}; it also uses Lemma
\ref{L4.0}.

\begin{proposition}
\label{P4.2} Let $w\in \lW{P}$. We denote
$\mathcal{P}_w^\tau=\mathcal{P}_w\cap\mathcal{B}^\tau$. Let
$\phi_w=\phi_{\mathcal{P}_w}:\mathcal{P}_w\to\mathcal{P}_w^\tau$ be
the map defined in Proposition \ref{P1.1}.
\begin{itemize}
\item[\rm (a)]
We have $\mathcal{P}_w^\tau=LwB/B$. The map $L/B_L\to
\mathcal{P}_w^\tau$, $\ell B_L\mapsto \ell wB/B$ is an isomorphism
of algebraic varieties.
\item[\rm (b)]
The map $\phi_w:\mathcal{P}_w\to\mathcal{P}_w^\tau$ is an algebraic
affine bundle whose typical fiber is the affine space of dimension
$\ell(w)$.
\end{itemize}
\end{proposition}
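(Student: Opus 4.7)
The plan is to apply Proposition~\ref{P1.1} to the particular $P$-orbit $\mathcal{P}_w = PwB/B$, using the natural base point $p_0 := wB/B$, and then combine this with Lemma~\ref{L4.0} and the dimension formula of Proposition~\ref{P-Bruhat}(b).

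First I would check that $p_0$ lies in $\mathcal{P}_w^\tau$: since $T \subset B$, we have $TwB = wTB = wB$, so $p_0$ is fixed by $T$ and a fortiori by the subtorus $\tau(\K^*)$. This legitimizes taking $p_0$ as the distinguished fixed point in Proposition~\ref{P1.1}.

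For part (a), Proposition~\ref{P1.1}(a) applied to $\mathcal{P}_w$ and $p_0$ yields a surjective morphism $L \to \mathcal{P}_w^\tau$, $\ell \mapsto \ell \cdot p_0 = \ell w B/B$, and an induced isomorphism $L/((wBw^{-1}) \cap L) \xrightarrow{\sim} \mathcal{P}_w^\tau$. This already identifies $\mathcal{P}_w^\tau$ with $LwB/B$. Then Lemma~\ref{L4.0} gives the key simplification $(wBw^{-1}) \cap L = B_L$ (valid precisely because $w \in \lW{P}$), and substituting produces the stated isomorphism $L/B_L \xrightarrow{\sim} \mathcal{P}_w^\tau$, $\ell B_L \mapsto \ell wB/B$.

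For part (b), Proposition~\ref{P1.1}(b) directly provides the algebraic affine bundle structure $\phi_w : \mathcal{P}_w \to \mathcal{P}_w^\tau$; it remains only to compute the fiber dimension. From Proposition~\ref{P-Bruhat}(b), $\dim \mathcal{P}_w = \ell(w) + \dim \mathcal{B}_L$, while part (a) together with the isomorphism $L/B_L \simeq \mathcal{B}_L$ gives $\dim \mathcal{P}_w^\tau = \dim \mathcal{B}_L$. The typical fiber of $\phi_w$ therefore has dimension $\ell(w)$. In truth there is no real obstacle here: the proof is a direct assembly of the three preceding results, and the only subtle point is choosing the base point $p_0 = wB/B$ so that the minimality condition $w \in \lW{P}$ can be cashed in through Lemma~\ref{L4.0}.
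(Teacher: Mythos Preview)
Your proposal is correct and follows exactly the approach the paper indicates: the paper does not give an explicit proof of this proposition but states that it ``summarizes the conclusions of Propositions~\ref{P-Bruhat}--\ref{P1.1}; it also uses Lemma~\ref{L4.0}.'' Your assembly of these three ingredients, with the base point $p_0=wB/B$ and the fiber-dimension count via Proposition~\ref{P-Bruhat}(b), is precisely what is intended.
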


\section{Parametrization and structure of $H$-orbits}

\label{section-7}

We start with a preliminary observation.

\begin{lemma}
\label{L-tau-fixed-points} Let $H=M\ltimes U$ be as in (\ref{H}).
Then, every $H$-orbit $\mathcal{H}$ of the flag variety
$\mathcal{B}=G/B$ contains an element which is fixed by the torus
$\tau(\K^*)$. Moreover, the subset of fixed points
$\mathcal{H}^\tau$ is a single $M$-orbit.
\end{lemma}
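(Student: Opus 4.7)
The plan is to reduce the statement to the $P$-equivariant fibration $\phi_w:\mathcal{P}_w\to\mathcal{P}_w^\tau$ constructed in Proposition \ref{P1.1}, exploiting the fact that $H\subset P$ splits as $H=M\ltimes U$ with $U$ being precisely the unipotent radical showing up in the fibration.

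First I would observe that since $H\subset P$, the $H$-orbit $\mathcal{H}$ is contained in a single $P$-orbit $\mathcal{P}_w$ for some $w\in \lW{P}$. By Proposition \ref{P1.1}(b) we have the algebraic affine bundle $\phi_w:\mathcal{P}_w\to\mathcal{P}_w^\tau$, characterized by $\phi_w(\ell u\cdot p_0)=\ell\cdot p_0$ for $\ell\in L$, $u\in U$, $p_0\in\mathcal{P}_w^\tau$. This formula immediately shows that $\phi_w$ is $P$-equivariant once $P=L\ltimes U$ acts on $\mathcal{P}_w^\tau$ through the quotient $P\to P/U=L$. In particular, $\phi_w$ is $H$-equivariant with $H$ acting on the base through $H/U=M$.

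Next I would verify that $U$ acts transitively on every fiber of $\phi_w$. Using Proposition \ref{P4.2}(a), identify $\mathcal{P}_w^\tau$ with $L/B_L$ via $\ell B_L\mapsto \ell w B/B$. The fiber of $\phi_w$ over $p_0=wB/B$ equals $\phi_w^{-1}(p_0)=U\cdot p_0\simeq U/(U\cap wBw^{-1})$, and a general fiber $\phi_w^{-1}(\ell\cdot p_0)$ equals $\ell\cdot(U\cdot p_0)=U\ell\cdot p_0$ since $U$ is normal in $P$. Hence each fiber of $\phi_w$ is a single $U$-orbit.

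Combining these facts, the $H$-orbits in $\mathcal{P}_w$ are exactly the $\phi_w$-preimages of the $M$-orbits in $\mathcal{P}_w^\tau$. Indeed, pick $p\in\mathcal{H}$ and any $q\in\mathcal{P}_w$ with $\phi_w(q)\in M\cdot\phi_w(p)$; writing $\phi_w(q)=m\cdot\phi_w(p)$ and using $H$-equivariance, we get $\phi_w(m^{-1}q)=\phi_w(p)$, so $m^{-1}q$ and $p$ lie in the same fiber of $\phi_w$, whence $m^{-1}q=u\cdot p$ for some $u\in U$, and finally $q=mu\cdot p\in H\cdot p=\mathcal{H}$. Therefore $\mathcal{H}=\phi_w^{-1}(M\cdot\phi_w(p))$. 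Since $\phi_w$ restricts to the identity on $\mathcal{P}_w^\tau$, we conclude $\mathcal{H}^\tau=\mathcal{H}\cap\mathcal{P}_w^\tau=M\cdot\phi_w(p)$, proving both assertions at once: $\mathcal{H}^\tau$ is nonempty and is a single $M$-orbit.

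The main obstacle is the transitivity of $U$ on the fibers of $\phi_w$; once this is settled via Lemma \ref{L4.0} and the description of $\mathcal{P}_w^\tau$ as $L/B_L$, the remaining steps are formal manipulations with the semidirect product $H=M\ltimes U$ and the $H$-equivariance of $\phi_w$.
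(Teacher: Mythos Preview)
Your proof is correct, but it takes a different route from the paper's. The paper argues more directly: given $\mathcal{H}\subset\mathcal{P}$, pick any $p_0\in\mathcal{P}^\tau$ and any $g\in P$ with $g\cdot p_0\in\mathcal{H}$; writing $g=u\ell$ and using $U\subset H$, one gets $\ell\cdot p_0\in\mathcal{H}^\tau$. For the second claim, if $p_0,p_1\in\mathcal{H}^\tau$ with $p_1=\ell u\cdot p_0$ for $\ell\in M$, $u\in U$, then $p_1=\tau(t)\cdot p_1=\ell(\tau(t)u\tau(t)^{-1})\cdot p_0\to\ell\cdot p_0$ as $t\to 0$, so $p_1=\ell\cdot p_0$. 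No reference to $\phi_w$ or to fiber transitivity is needed.

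Your approach instead exploits the affine bundle $\phi_w$ from Proposition~\ref{P1.1}: you show $\phi_w$ is $H$-equivariant (for the $M$-action on the base) and that $U$ is transitive on fibers, and conclude $\mathcal{H}=\phi_w^{-1}(M\cdot\phi_w(p))$, whence $\mathcal{H}^\tau=M\cdot\phi_w(p)$. This is sound; note that the fiber transitivity in fact follows straight from the formula $\phi_w(\ell u\cdot p_0)=\ell\cdot p_0$ together with normality of $U$ in $P$, so Lemma~\ref{L4.0} is not really needed here. The payoff of your route is that you simultaneously establish the stronger identity $\mathcal{H}=\phi_w^{-1}(\mathcal{H}^\tau)$, which is exactly what the paper later proves in Theorem~\ref{T4.1}(c) \emph{using} the present lemma as input. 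The paper's argument, on the other hand, is shorter and avoids invoking the bundle machinery of Section~\ref{section-6}.
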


\begin{proof}
Since $H\subset P$, the orbit $\mathcal{H}$ is contained in some
$P$-orbit $\mathcal{P}$. As recalled in the previous subsection, the
subset of fixed points $\mathcal{P}^\tau$ is nonempty, so let
$p_0\in\mathcal{P}^\tau$. There is $g\in P$ such that $g\cdot
p_0\in\mathcal{H}$. We write $g=u\ell$ with $\ell\in L$ and $u\in
U$. By assumption, $U$ is contained in $H$, so $u\in H$. hence
$\ell\cdot p_0=u^{-1}\cdot(g\cdot p_0)\in\mathcal{H}$. In addition
since $p_0$ is fixed by the torus $\tau(\K^*)$ and $\ell(\in L)$
commutes with every $\tau(t)$ ($t\in\K^*$), we get that $\ell\cdot
p_0$ is also fixed by $\tau(\K^*)$.\ Hence
$\mathcal{H}^\tau\not=\emptyset$.

Now let $p_0,p_1\in\mathcal{H}^\tau$. There is $g\in H$ such that
$p_1=g\cdot p_0$. Writing $g=\ell u$ with $\ell\in M$ and $u\in U$,
we have for every $t\in\K^*$ that
\[p_1=\tau(t)\cdot p_1=\ell(\tau(t)u\tau(t)^{-1})\cdot p_0,\]
hence, by passing to the limit as $t\to 0$, we obtain $p_1=\ell\cdot
p_0$.
\end{proof}

By assumption $M$ is a spherical subgroup of $L$, hence the action
of $M$ on the flag variety $\mathcal{B}_L=L/B_L$ has finitely many
orbits. Let $\Xi\subset L$ be a set of representatives of the
orbits, i.e., giving rise to the decomposition
\[\mathcal{B}_L=\bigsqcup_{\xi\in \Xi}\mathcal{M}_L(\xi)\quad\mbox{where}\quad \mathcal{M}_L(\xi):=M\xi B_L/B_L.\]
Let $d(\xi)=\dim\mathcal{M}_L(\xi)$.

\begin{theorem}
\label{T4.1} For every $(w,\xi)\in \lW{P}\times \Xi$, we set
$\mathcal{H}_{w,\xi}:= H\xi wB/B$. (This is an $H$-orbit of
$\mathcal{B}$.)
\begin{itemize}
\item[\rm (a)] Every $H$-orbit of $\mathcal{B}$ is of the form
$\mathcal{H}_{w,\xi}$ for a unique pair $(w,\xi)\in \lW{P}\times
\Xi$.
\item[\rm (b)] The $\tau$-fixed point set $\mathcal{H}_{w,\xi}^\tau$
is given by $\mathcal{H}_{w,\xi}^\tau=M\xi wB/B$ and it is
($M$-equivariantly) isomorphic to $\mathcal{M}_L(\xi)$.
\item[\rm (c)]
The affine bundle $\phi_w:\mathcal{P}_w\to\mathcal{P}_w^\tau$ of
Proposition \ref{P4.2}\,{\rm (b)} satisfies
$\phi_w^{-1}(\mathcal{H}_{w,\xi}^\tau)=\mathcal{H}_{w,\xi}$, hence
it restricts to an algebraic affine bundle
\[\mathcal{H}_{w,\xi}\to\mathcal{H}_{w,\xi}^\tau\cong\mathcal{M}_L(\xi)\]
of fiber isomorphic to $\mathbb{A}^{\ell(w)}$. In particular,
$\dim\mathcal{H}_{w,\xi}=\ell(w)+d(\xi)$.
\end{itemize}
\end{theorem}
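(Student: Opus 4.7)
The plan is to classify $H$-orbits on $\mathcal{B}$ by decomposing along the $P$-orbit stratification of Proposition \ref{P-Bruhat} and then analyzing each $P$-orbit via its $\tau$-fixed locus. The key observation is that since $H \subset P$, each $H$-orbit is contained in a unique $\mathcal{P}_w$; moreover, by Lemma \ref{L-tau-fixed-points} each $H$-orbit $\mathcal{H} \subset \mathcal{P}_w$ meets $\mathcal{P}_w^\tau = \mathcal{P}_w \cap \mathcal{B}^\tau$ in a single $M$-orbit. Conversely, any $M$-orbit in $\mathcal{P}_w^\tau$ is precisely the $\tau$-fixed locus of the $H$-orbit it generates (again by the lemma), so the two assignments are inverse bijections. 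Hence the $H$-orbits contained in $\mathcal{P}_w$ are in bijection with the $M$-orbits of $\mathcal{P}_w^\tau$.

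Via the $L$-equivariant isomorphism $L/B_L \xrightarrow{\sim} \mathcal{P}_w^\tau$, $\ell B_L \mapsto \ell w B/B$ of Proposition \ref{P4.2}\,(a), $M$-orbits of $\mathcal{P}_w^\tau$ correspond bijectively to $M$-orbits of $\mathcal{B}_L$, i.e.\ to $\Xi$. Under this correspondence, $\mathcal{M}_L(\xi) = M\xi B_L/B_L$ matches with $M \xi w B/B$, yielding both the parametrization $\mathcal{H}_{w,\xi} = H \xi w B/B$ of part (a) and the identification $\mathcal{H}_{w,\xi}^\tau = M\xi w B/B \cong \mathcal{M}_L(\xi)$ of part (b).

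For part (c), the central step is the equality $\phi_w^{-1}(\mathcal{H}_{w,\xi}^\tau) = \mathcal{H}_{w,\xi}$. The inclusion $\supset$ is immediate: for $p_0 := \xi w B/B \in \mathcal{H}_{w,\xi}^\tau$ and any $m \in M$, $u \in U$, the defining formula of Proposition \ref{P1.1}\,(b) gives $\phi_w(mu \cdot p_0) = m \cdot p_0 \in \mathcal{H}_{w,\xi}^\tau$. For $\subset$, I will observe that the fiber of $\phi_w$ over an arbitrary point $\ell \cdot p_0 \in \mathcal{P}_w^\tau$ is $\{\ell u \cdot p_0 : u \in U\} = (\ell U \ell^{-1})\cdot(\ell \cdot p_0) = U\cdot(\ell\cdot p_0)$, since $L$ normalizes $U$; hence fibers of $\phi_w$ are exactly $U$-orbits, and because $U \subset H$ such a $U$-orbit remains inside $\mathcal{H}_{w,\xi}$ whenever its base point already lies in $\mathcal{H}_{w,\xi}^\tau$. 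Given this equality, the restriction of the algebraic affine bundle $\phi_w$ to the locally closed subvariety $\mathcal{H}_{w,\xi}^\tau \subset \mathcal{P}_w^\tau$ is an algebraic affine bundle with typical fiber $\mathbb{A}^{\ell(w)}$, and the dimension formula $\dim \mathcal{H}_{w,\xi} = \ell(w) + d(\xi)$ follows at once.

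The main obstacle is the identification $\phi_w^{-1}(\mathcal{H}_{w,\xi}^\tau) = \mathcal{H}_{w,\xi}$ in (c); parts (a) and (b) are a routine combination of Lemma \ref{L-tau-fixed-points} with Proposition \ref{P4.2}. The crux is recognizing that the fibers of the Bia\l ynicki--Birula affine bundle $\phi_w$ are precisely the $U$-orbits, so that the hypothesis $U \subset H$ promotes the $M$-equivariant bijection on $\tau$-fixed loci to an equality of full $H$-orbits with $\phi_w$-preimages.
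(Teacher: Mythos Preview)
Your proof is correct and follows essentially the same approach as the paper: decompose along $P$-orbits, use Lemma~\ref{L-tau-fixed-points} to biject $H$-orbits in $\mathcal{P}_w$ with $M$-orbits in $\mathcal{P}_w^\tau$, and then transport the $M$-orbit decomposition of $\mathcal{B}_L$ via Proposition~\ref{P4.2}\,(a). For part~(c), the paper argues slightly differently---it shows $\phi_w(\mathcal{H})\subset\mathcal{H}^\tau$ for every $H$-orbit $\mathcal{H}\subset\mathcal{P}_w$ and then invokes disjointness of the $\mathcal{H}_{w,\xi}$ to conclude $\phi_w^{-1}(\mathcal{H}_{w,\xi}^\tau)=\mathcal{H}_{w,\xi}$---whereas you prove the reverse inclusion directly by observing that fibers of $\phi_w$ are $U$-orbits; both arguments are equivalent in content.
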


\begin{proof}
Since $H\subset P$, we have a partition
\[\mathcal{B}/H=\bigsqcup_{w\in \lW{P}}\mathcal{P}_w/H.\]
Let $w\in \lW{P}$ and let us consider the $P$-orbit $\mathcal{P}_w$
and its set of $H$-orbits. We also consider the fixed point set
$\mathcal{P}_w^\tau$ and its set of $M$-orbits. It follows from
Lemma \ref{L-tau-fixed-points} that the map
\[\mathcal{P}_w/H\to \mathcal{P}_w^\tau/M,\ \mathcal{H}\mapsto\mathcal{H}^\tau\]
is well defined, in addition this map is clearly injective and
surjective, hence bijective. Let us analyze the decomposition of the
subvariety $\mathcal{P}_w^\tau$ into $M$-orbits. As noted in
Proposition \ref{P4.2}, we have an isomorphism
\[\mathcal{B}_L=L/B_L\to \mathcal{P}_w^\tau,\ \ell B_L\mapsto \ell w B/B.\]
Therefore, the decomposition $\mathcal{B}_L=\bigsqcup_{\xi\in
\Xi}\mathcal{M}_L(\xi)$, with $\mathcal{M}_L(\xi)=M\xi B_L/B_L$,
induces the decomposition into $M$-orbits
\[
\mathcal{P}_w^\tau=\bigsqcup_{\xi\in\Xi} M\xi w B/B,\quad
\mbox{and}\quad \mathcal{M}_L(\xi)\cong M\xi w B/B.
\]
The definition of $\mathcal{H}_{w,\xi}$ implies that
$\mathcal{H}_{w,\xi}$ is the unique $H$-orbit of $\mathcal{P}_w$
such that $\mathcal{H}_{w,\xi}^\tau=M\xi w B/B$. This shows parts
{\rm (a)} and {\rm (b)}.

In order to show part {\rm (c)}, it suffices to show the inclusion
$\phi_w(\mathcal{H})\subset \mathcal{H}^\tau$ for every $H$-orbit
$\mathcal{H}\subset\mathcal{P}_w$. Then, knowing that the $H$-orbits
are pairwise disjoint, this forces
$\phi_w^{-1}(\mathcal{H}_{w,\xi}^\tau)=\mathcal{H}_{w,\xi}$, and the
properties of the restriction $\phi_w|_{\mathcal{H}_{w,\xi}}$ are
inherited from the properties of $\phi_w$ stated in Proposition
\ref{P4.2}\,{\rm (b)}. So let $\mathcal{H}\subset\mathcal{P}_w$ be
an $H$-orbit. Take any $p_0\in \mathcal{H}^\tau$. Thus
$\mathcal{H}=\{\ell u\cdot p_0:\ell\in M,\ u\in U\}$, and by the
characterization of $\phi_w=\phi_{\mathcal{P}_w}$ given in
Proposition \ref{P1.1}, we have $\phi_w(\ell u\cdot p_0)=\ell\cdot
p_0\in\mathcal{H}^\tau$ for all $\ell u\cdot p_0\in\mathcal{H}$.
This shows part {\rm (c)} of the statement.
\end{proof}

We summarize the obtained results about $H$-orbits with the
following diagram:
\begin{equation}
\label{equa:diagram} \xymatrix{
\mathbb{A}^{\ell(w)} \ar[r] & \ar[r]^{\phi_w} \cP_w & \cP_w^\tau \ar[r]^\sim & L/B_L \\
\ar@{=}[u] \mathbb{A}^{\ell(w)} \ar[r] & \ar[r]^{\phi_w} \cH_{w,\xi}
\ar@{^{(}->}[u] & \ar[r]^\sim \cH_{w,\xi}^\tau \ar@{^{(}->}[u] &
\cM_L(\xi) \ar@{^{(}->}[u] }
\end{equation}

\begin{example}
\label{E7.3} Let $G=\mathrm{GL}_n(\K)$ and let $H=Z:=Z_G(e)$ be the
stabilizer of a nilpotent matrix $e\in\mathfrak{gl}_n(\K)$ such that
$e^2=0$. Let $r=\mathrm{rank}\,e$. As shown in Example \ref{E4.5}
and Proposition \ref{P-Z-height2}, the parabolic subgroup
$P=L\ltimes U$ associated to $e$  has a Levi factor of the form
$L=\mathrm{GL}_{r}(\K)\times \mathrm{GL}_{r}(\K)\times
\mathrm{GL}_{n-2r}(\K)$, while $Z$ is of the form $L_Z\ltimes U$
with $L_Z=\Delta\mathrm{GL}_r(\K)\times\mathrm{GL}_{n-2r}(\K)$.
(Here $\Delta\mathrm{GL}_r(\K)$ stands for the diagonal embedding of
$\mathrm{GL}_r(\K)$ into
$\mathrm{GL}_r(\K)\times\mathrm{GL}_r(\K)$.)

In this example:
\begin{itemize}
\item $\lW{P}$ is the set of minimal length representatives of the quotient $(\mathfrak{S}_r\times\mathfrak{S}_r\times\mathfrak{S}_{n-2r})\backslash \mathfrak{S}_n$;
\item
 the flag variety $\mathcal{B}_L$ is a triple flag variety isomorphic to $\mathcal{B}_r\times\mathcal{B}_r\times\mathcal{B}_{n-2r}$, where $\mathcal{B}_k$ stands for the flag variety $\mathrm{GL}_k(\K)/B_k$ of $\mathrm{GL}_k(\K)$;
\item
 the $L_Z$-orbits of $\mathcal{B}_L$ are parametrized by the permutations $v\in\mathfrak{S}_r$ and take the form $\mathbb{O}_v\times\mathcal{B}_{n-2r}$,
where
$\mathbb{O}_v=\mathrm{GL}_r(\K)\cdot(vB_r,B_r)\cong\mathrm{GL}_r(\K)\times^{B_r}\mathbb{A}^{\ell(v)}$.
\end{itemize}
Therefore, in this example, the $Z$-orbits of $\mathcal{B}$ are
parametrized by the pairs $(w,v)\in \lW{P}\times\mathfrak{S}_r$, and
each orbit is an algebraic affine bundle over $\mathcal{B}_r\times
\mathcal{B}_{n-2r}$ of fiber isomorphic to the affine space
$\mathbb{A}^{\ell(w)}\times\mathbb{A}^{\ell(v)}$. In particular,
since the double flag variety $\mathcal{B}_r\times
\mathcal{B}_{n-2r}$ has a natural cell decomposition (the product of
the Schubert cell decompositions of $\mathcal{B}_r$ and
$\mathcal{B}_{n-2r}$), we deduce that each $Z$-orbit of
$\mathcal{B}$ has a cell decomposition; moreover, the number of
cells and the codimensions of the cells are the same for each
$Z$-orbit.
\end{example}

\newpage

\part{Bruhat order}

\label{part3}

In this part, our motivation is to understand the (strong) Bruhat
order for the orbits of $Z=Z_G(e)$ on the flag variety
$\mathcal{B}=G/B$, or equivalently for the orbits of $B$ on the
nilpotent orbit $G\cdot e=G/Z$, where $e$ is nilpotent element of
height $2$. In type $A$, the order is described by Boos and
Reineke~\cite{BR} in terms of link patterns. Their approach is based
on representations of quivers.

Actually, our motivation is (more generally) to have a description
of the (strong) Bruhat order for the orbits of a subgroup $H$ of the
form
\begin{equation}
\label{H-part3} H=M\ltimes U
\end{equation}
where $P=LU$ is the Levi decomposition a parabolic subgroup and
$(L^\sigma)^0\subset M\subset L^\sigma$ for some involution
$\sigma\in \mathrm{Aut}(L)$.

Note that if $\sigma=\mathrm{id}_L$, then $H$ is a parabolic
subgroup and the strong order is in this case the Bruhat order on
the parabolic quotient $\lW{P}$ of the Weyl group. If $P=G$, then
$H$ is a symmetric subgroup of $G$ (possibly up to certain connected
components) and the strong order is described by Richardson and
Springer \cite{RS1,RS2}. If $e$ is a nilpotent element of height 2,
then its stabilizer $Z$ (as well as the connected subgroup $Z^0$)
are of the form considered in (\ref{H-part3}) (see Propositions
\ref{P.symmetric-space} and \ref{P-Z-height2}). Note also that the
description of the weak Bruhat order for $H$ as in (\ref{H-part3})
(or even more generally for $H$ obtained through parabolic induction
from a spherical subgroup of $L$) is given in \cite{Brion}.

In this part, we propose a combinatorial model
which reflects the geometric situation of (\ref{H-part3}). We first
introduce a Coxeter-theoretic partial order, defined by using a
parabolic subgroup of a Coxeter group $W$ which is equipped with an
involution (Section \ref{cox}). In type $A$, this order coincides
with the order given by inclusions of the $Z_G(e)$-orbit closures
(for a spherical nilpotent orbit $G\cdot e$). We give
additional combinatorial criteria in this case (Section \ref{type_a}). We also give
necessary and sufficient conditions for a relation to be a cover
relation (see Theorem \ref{covers}); in~\cite{BR}, necessary
conditions for a relation to be a cover relation are given, but
these conditions are not sufficient in general.

The notation used in Part \ref{part3} is independent of the notation
used in Parts \ref{part1}--\ref{part2}.

\section{A Coxeter-theoretic partial order}

\label{cox}

Let $(W,S)$ be a Coxeter system (with $S$ finite, but $W$ may be
infinite). We denote by $\leq$ the (strong) Bruhat order on $W$.
Recall that for $u,v\in W$, we have $u\leq v$ if and only if every
reduced expression of $v$ has a subword which is a reduced
expression of $u$ (see~\cite{Deodhar}). Also recall that every
subset $L\subset S$ induces a decomposition
$$
W=W^LW_L\cong W^L\times W_L
$$
where $W_L\subset W$ is the subgroup generated by $L$ and
$W^L:=\{w\in W: \ell(ws)>\ell(w)~\forall s\in L\}$.

Let $I, J, K\subset S$ satisfying the following two conditions:

\begin{enumerate}
\item There is an isomorphism $\sigma: W_I \longrightarrow W_J$, $x\mapsto x^*$ of Coxeter groups,
\item $W_{I\cup J \cup K} = W_I \times W_J \times W_K$.
\end{enumerate}

It follows in particular from the first point that $x\mapsto x^*$
preserves the Bruhat order, that is, we have $x\leq y$ if and only
if $x^*\leq y^*$. The second point means that the subsets $I,J,K$
are disjoint and disconnected. Note that $W_I \to W, x \mapsto xx^*$
is  a monomorphism of groups; we denote by $W_{I,J}$ its image:
$$W_{I,J}=\{ xx^*: x\in W_I\}.$$ We have $W_I \cap W_{I,J}=\{1\}$ and
$$W_I\times W_J=W_{I\cup J}= W_I W_{I,J}=W_J W_{I,J}.$$

Using the decomposition $W=W^{I\cup J\cup K} W_{I\cup J\cup K}$, we
get that every $w\in W$ can be written uniquely as a product $x_1
x_2 x_3 x x^*$ with $x_1\in W^{I\cup J\cup K}$, $x_2, x\in W_I$,
$x_3\in W_K$. Note that $$\ell(w)=\ell(x_1)+\ell(x_2 x
x^*)+\ell(x_3)= \ell(x_1)+\ell(x_2x)+\ell(x)+\ell(x_3).$$

We set $W(I,J,K)=W^{I\cup J\cup K} W_I=W^{J\cup K}$. Note that it
gives a set of representatives of the left cosets of the subgroup
$W_K W_{I,J}\subset W$.
\begin{pe}
phrase suivante ajoutée
\end{pe}
The following order defined on this quotient will in fact not depend on this particular
choice of representatives in view of Corollary \ref{coro:no_matter_which_minimal_above}.

Given $w\in W$, we set $[w]=w W_K W_{I,J}$.
In this section, our aim is to study the left cosets $[w]$, the set
of representatives $W(I,J,K)$, and the following binary relation:
$$
\mbox{For $w, w'\in W(I,J,K)$, we write $w' \leq_{\mathcal{O}} w$ if
there is $u\in [w']$ such that $u \leq w$.}
$$
Our first claim is:

\begin{proposition}
\label{P-order_abstract} The relation $\leq_{\mathcal{O}}$ defines a
partial order on $W(I,J,K)$.
\end{proposition}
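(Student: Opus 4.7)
Reflexivity is immediate: since $1 \in W_K W_{I,J}$, we have $w \in [w]$ and $w \leq w$, giving $w \leq_{\mathcal{O}} w$.

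The main input for antisymmetry will be a length inequality: for $w \in W(I,J,K)$, one should have $\ell(wy) \geq \ell(w)$ for every $y \in W_K W_{I,J}$, i.e., $w$ is a shortest element of its coset $[w]$. I would prove this using the decomposition $w = x_1 x_2$ with $x_1 \in W^{I \cup J \cup K}$ and $x_2 \in W_I$, and writing a general $y \in W_K W_{I,J}$ as $y = z \cdot x x^*$ with $z \in W_K$ and $x \in W_I$. Since $W_I, W_J, W_K$ pairwise commute by condition~(2), one rearranges $wy = x_1 \, (x_2 x) \, z \, x^*$ whose last three factors lie in $W_I, W_K, W_J$ respectively; as $x_1 \in W^{I \cup J \cup K}$ the lengths add, giving
$$\ell(wy) = \ell(x_1) + \ell(x_2 x) + \ell(z) + \ell(x) \geq \ell(x_1) + \ell(x_2) + \ell(z) \geq \ell(w),$$
via the triangle inequality $\ell(x_2 x) \geq \ell(x_2) - \ell(x)$. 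Antisymmetry is then a squeeze: if $u \in [w']$, $u \leq w$, and $v \in [w]$, $v \leq w'$, then $\ell(w') \leq \ell(u) \leq \ell(w) \leq \ell(v) \leq \ell(w')$ forces all four lengths equal, so Bruhat antisymmetry yields $u = w$; hence $w \in [w']$, so $[w] = [w']$, and since $W(I,J,K)$ is a system of representatives, $w = w'$.

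For transitivity I would establish the following lifting statement: if $u \leq w'$ in $W$ with $w' \in W(I,J,K)$ and $y \in W_K W_{I,J}$, then some $y' \in W_K W_{I,J}$ satisfies $uy' \leq w'y$. Granted this, starting from $u \in [w'']$ with $u \leq w'$ and $v = w'y \in [w']$ with $v \leq w$, one obtains $uy' \leq w'y = v \leq w$ with $uy' \in u W_K W_{I,J} = [w'']$. To prove the lifting I would decompose $y = z \cdot xx^*$ and treat the two factors separately: for $z \in W_K$ this is the standard parabolic Bruhat lifting iterated over the simple reflections in $K$; for $xx^* \in W_{I,J}$ I would induct on $\ell(x)$, writing $x = x_0 s$ with $s \in I$ so that $xx^* = (x_0 x_0^*) \cdot ss^*$ and lifting through right-multiplication by $ss^*$ at the inductive step. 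The structural fact that makes this tractable is that $s^* \in J$ is \emph{always} a right ascent of $w'$, because $w' \in W^{J \cup K}$; consequently the dihedral coset $w'\langle s, s^*\rangle$ has only restricted length patterns, and a careful case analysis of the classical Bruhat lifting (min and max comparisons under right multiplication by each of the commuting simple reflections $s$ and $s^*$) should produce a witness in $u W_{I,J}$ rather than in the larger four-element set $u \langle s, s^*\rangle$.

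I expect this last step---verifying that the iterated classical lifting can always be arranged to stay inside the diagonal, non-parabolic subgroup $W_{I,J}$ (the generators $ss^*$ are not reflections)---to be the main technical obstacle in the proof.
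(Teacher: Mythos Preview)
Your reflexivity and antisymmetry arguments are correct and match the paper's.

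For transitivity you isolate exactly the right lifting statement (this is the paper's Lemma~\ref{repn}), but you leave its proof incomplete, and the obstacle you flag is genuine. The single-step diagonal lift through $ss^*$, namely
\[
u'' \leq v''\ \Longrightarrow\ \exists\, t \in \{1, ss^*\}\ \text{with}\ u''t \leq v''ss^*,
\]
\emph{fails} when both $s$ and $s^*$ are descents of $v''$ (take $v'' = ss^*$, $u'' = s$: then $v''ss^* = 1$ and neither $s$ nor $s^*$ is $\leq 1$). It does hold whenever $s^*$ is an ascent of $v''$, which is your ``structural fact'' for $v'' = w'$; but at the inductive step the upper element is $v'' = w'z\,x_0x_0^*$, and the fact no longer applies as stated. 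Your induction can in fact be rescued: writing $x = x_0 s$ reduced forces $s^*$ to be an ascent of $x_0^*$, and since $w'zx_0 \in W^J$ (same computation as for $w'z$), length-additivity shows $s^*$ remains an ascent of $w'zx_0x_0^*$. So the invariant propagates, and each single-step lift then goes through by the subword argument that later appears in the paper as Lemma~\ref{lemm:couverture}.

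The paper bypasses this induction with a cleaner device. It first proves a general product lemma (Lemma~\ref{lem_product}): given $x' \leq x$ and any $y \in W$, there is $y' \leq y$ with $x'y' \leq xy$. Applying it once for $a \in W_K$ and once for $x \in W_I$ yields $b \leq a$ in $W_K$ and $y \leq x$ in $W_I$ with $uby \leq w'ax$. The decisive observation is that the Coxeter isomorphism $\sigma$ preserves Bruhat order, so $y \leq x$ gives $y^* \leq x^*$ for free; and since $w'ax \in W^J$, the product $(w'ax)\cdot x^*$ is length-additive, so one simply concatenates subwords to obtain $ubyy^* \leq w'axx^*$. The idea is to \emph{decouple} $x$ from $x^*$: lift through $x$ alone and then transport the witness to the $J$-side via $\sigma$, rather than lifting through the paired element $xx^*$ and fighting at each step to keep the witness diagonal.
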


This assertion is shown in Section \ref{section-partial-order},
where preliminary remarks are also made. In Section
\ref{section-minimal-length}, we describe the elements of minimal
length $\ell$ in each left coset: it turns out that there can be several elements of minimal length in a coset. We show that the definition of
the order $\leq_\mathcal{O}$ can be rephrased in terms of these
minimal elements (Corollary
\ref{coro:no_matter_which_minimal_above}), so that
$\leq_\mathcal{O}$ induces an order on the quotient set
$W/W_{I,J}W_K$ in a way which does not depend on the choice of the
set of representatives $W(I,J,K)$. In Section
\ref{section-cover-relations}, we characterize the cover relations
for $\leq_\mathcal{O}$, and our final conclusion is that the poset
$(W(I,J,K), \leq_{\mathcal{O}})$ is graded with rank function given
by the restriction of the length $\ell$ to $W(I,J,K)$ (Corollary
\ref{C-graded}).

\subsection{Partial order $\leq_\mathcal{O}$}
The proof of Proposition \ref{P-order_abstract} is based on the
following lemma.

\label{section-partial-order}

\begin{lemma}
\label{repn} Let $w, w_1\in W(I,J,K)$ and assume that
$w\leq_{\mathcal{O}} w_1$. Then for every $w_2\in [w_1]$, there is
$v\in [w]$ such that $v \leq w_2$.
\end{lemma}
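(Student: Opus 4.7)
The plan is to reduce to the following equivalent claim: \emph{if $u \leq w_1$ in $W$ and $g \in W_K W_{I,J}$, then there exists $v \in uW_K W_{I,J}$ with $v \leq w_1 g$}. Indeed, $w \leq_{\mathcal{O}} w_1$ produces some $u \in [w]$ with $u \leq w_1$, and any $w_2 \in [w_1]$ has the form $w_1 g$ with $g = w_1^{-1} w_2 \in W_K W_{I,J}$, so the reformulation gives precisely the desired $v \in [w]$ with $v \leq w_2$.

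I would argue by induction on $\ell(g)$. Since $K$ is disconnected from $I \cup J$, the subgroups $W_K$ and $W_{I,J}$ centralize each other, so $g$ decomposes uniquely as $g = g_K \cdot xx^*$ with $g_K \in W_K$ and $x \in W_I$. In the first case $g_K \neq 1$, I pick a reduced expression of $g_K$ ending in some $s \in K$; using that $s$ commutes with $xx^*$, one rewrites $g = g' \cdot s$ with $g' \in W_K W_{I,J}$ and $\ell(g') = \ell(g) - 1$. The inductive hypothesis supplies $v' \in uW_K W_{I,J}$ with $v' \leq w_1 g'$, and the classical lifting property of the Bruhat order applied to the simple reflection $s$ yields $v \in \{v', v's\}$, which lies in $uW_K W_{I,J}$ since $s \in K$, with $v \leq w_1 g' \cdot s = w_1 g$.

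In the second case $g_K = 1$ and $x \neq 1$, I pick $s \in I$ with $\ell_{W_I}(xs) = \ell_{W_I}(x) - 1$ and use the commutation $s \cdot (xs)^* = (xs)^* \cdot s$ (valid since $s \in I$ and $(xs)^* \in W_J$ lie in disconnected components) to rewrite $g = g' \cdot ss^*$ with $g' = (xs)(xs)^* \in W_{I,J}$ and $\ell(g') = \ell(g) - 2$. Induction supplies $v' \in uW_K W_{I,J}$ with $v' \leq w_1 g'$, and it remains to find $v \in \{v', v' ss^*\}$ (in the same coset, since $ss^* \in W_{I,J}$) satisfying $v \leq w_1 g' \cdot ss^*$.

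The main obstacle lies in this last step, because a naive product analogue of the lifting property fails in general: one can exhibit in $\mathfrak{S}_4$ a pair $a \leq b$ and two commuting simple reflections $s, t$ for which neither $a \leq bst$ nor $ast \leq bst$ holds. To bypass this, the hypothesis $w_1 \in W(I,J,K) = W^{J \cup K}$ is essential: it ensures $\ell(w_1 s^*) = \ell(w_1) + 1$ for every $s \in I$, and one tracks the propagation of this ``$s^*$-ascent'' property through the induction. Writing $w_1 g' \cdot ss^* = (w_1 g' \cdot s^*) \cdot s$ and applying the lifting property twice (first with $s^*$, then with $s$) allows one to treat each of the finitely many descent/ascent configurations of $s$ and $s^*$ on $v'$ and $w_1 g'$, and to verify that in every case one of $v'$ or $v' ss^*$ is $\leq w_1 g' \cdot ss^*$. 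Carrying out this case analysis and maintaining the ascent property across inductive steps is the principal technical bookkeeping.
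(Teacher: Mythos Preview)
Your reduction and inductive scheme are sound, and the argument can be completed, but it takes a longer path than the paper's and leaves the hardest step as unperformed ``bookkeeping''. In Case~2 the claim you need is: given $v' \leq A := w_1 g'$ with $s^*$ an ascent for $A$ (which, as you note, follows from $w_1 \in W^{J\cup K}$ and $g' = (xs)(xs)^*$), some element of $\{v', v'ss^*\}$ lies below $Ass^*$. If $s$ is also an ascent for $A$ then $v' \leq A \leq Ass^*$ and you are done; the delicate case is when $s$ is a descent for $A$, so $\ell(Ass^*) = \ell(A)$. Here two na\"ive applications of the lifting property tend to produce only $v's \leq Ass^*$, which is not in the coset. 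What actually works is the subword argument: pick a reduced expression $A = s_1\cdots s_{k-1}s$ ending in $s$; if the subword for $v'$ omits the final $s$ then $v' \leq As \leq Ass^*$, and if it uses the final $s$ then replacing that $s$ by $s^*$ exhibits $v'ss^*$ as a subword of the reduced expression $s_1\cdots s_{k-1}s^*$ of $Ass^*$. This is exactly Lemma~\ref{lemm:couverture} in the paper (proved there for a later purpose), so your Case~2 reduces to that lemma rather than to a pure double-lifting. Also, the ``propagation of the $s^*$-ascent'' does not need to be tracked through the induction: it holds automatically at each step because the $W_J$-component of $w_1 g'$ is $(xs)^*$ and $s$ was chosen as a descent of $x$.

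The paper's proof avoids the induction and case analysis entirely by first isolating a general product lemma: for any $x' \leq x$ and any $y$, there exists $y' \leq y$ with $x'y' \leq xy$ (Lemma~\ref{lem_product}). Applying this twice---once with $y = a \in W_K$, once with $y = x \in W_I$---gives $b \leq a$ and $y \leq x$ with $uby \leq w_1 a x$. The role of $w_1 \in W(I,J,K)$ then enters only at the very end, via the single clean observation $\ell(w_1 a x x^*) = \ell(w_1 a x) + \ell(x^*)$, which lets one append $y^* \leq x^*$ for free by the subword criterion. Your approach trades this one length identity for a step-by-step lifting; both are correct, but the paper's route is shorter and separates the general Coxeter-theoretic fact (Lemma~\ref{lem_product}) from the one place where the hypothesis on $w_1$ is used.
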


To prove this lemma, we need:

\begin{lemma}\label{lem_product}
Let $x,y\in W$. Let $x'\leq x$. There is $y'\leq y$ such that
$x'y'\leq xy$.
\end{lemma}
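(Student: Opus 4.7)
The plan is to prove the lemma by induction on $\ell(y)$, exploiting the classical lifting property of the Bruhat order.

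The base case $\ell(y) = 0$ is immediate: $y = 1$, and we take $y' = 1$, yielding $x' y' = x' \leq x = xy$. For the inductive step, write $y = y_0 s$ with $s \in S$ and $\ell(y_0) = \ell(y) - 1$, and apply the induction hypothesis to $y_0$ to obtain $y_0' \leq y_0$ such that $x' y_0' \leq x y_0$. I will then set $y'$ to be either $y_0'$ or $y_0' s$, depending on the situation.

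Recall the lifting property: if $u \leq v$ in $W$ and $s \in S$, then both $\max(u, us) \leq \max(v, vs)$ and $\min(u, us) \leq \min(v, vs)$. Applying it to $y_0' \leq y_0$ with $s$ shows that both $y_0'$ and $y_0' s$ lie below $y = \max(y_0, y_0 s)$, so the constraint $y' \leq y$ will hold automatically for either choice. Applying it to $x' y_0' \leq x y_0$ with $s$ yields
\[
\max(x' y_0', x' y_0' s) \leq \max(x y_0, x y_0 s), \qquad \min(x' y_0', x' y_0' s) \leq \min(x y_0, x y_0 s).
\]
Since $xy = x y_0 s$, exactly one of the two right-hand sides equals $xy$: the maximum if $x y_0 s > x y_0$, the minimum otherwise. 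In each case we select $y' \in \{y_0', y_0' s\}$ so that $x' y'$ realizes the corresponding maximum or minimum on the left, which gives $x' y' \leq xy$.

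The main (and modest) obstacle is the case analysis concealed in the last step: for each combination of the signs of $\ell(x' y_0' s) - \ell(x' y_0')$ and $\ell(x y_0 s) - \ell(x y_0)$, one must verify that the intended choice $y_0'$ or $y_0' s$ genuinely realizes the relevant extremum of $\{x' y_0', x' y_0' s\}$. This is routine provided one uses the full strength of the lifting property, which is precisely designed to control uniformly both the ``parallel'' subcases (where $x' y_0'$ and $x y_0$ react the same way to right-multiplication by $s$) and the ``crossed'' subcases (where they react in opposite ways).
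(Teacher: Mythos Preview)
Your proof is correct and follows the same inductive scheme as the paper's: induction on $\ell(y)$, peeling off one simple reflection $s$ on the right at each step. The only difference is cosmetic: where the paper handles the step ``multiply on the right by $s$'' via explicit subword manipulations in reduced expressions, you invoke the lifting property $\max(u,us)\leq\max(v,vs)$, $\min(u,us)\leq\min(v,vs)$, which packages the same content more cleanly and spares you the separate treatment of the $\ell(y)=1$ case.
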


\begin{proof}[Proof of Lemma \ref{lem_product}]
We argue by induction on $\ell(y)$. If $\ell(y)=0$, then the claim
holds with $y'=\eW$. If $\ell(y)=1$,
then $y=s\in S$. If $xs > x$ then the claim holds with $y'=\eW$.
Hence assume that $xs<x$. We choose a reduced expression $s_1
s_2\cdots s_k$ of $x$ with $s=s_k$. Since $x'\leq x$ we have that
$x'$ has a reduced expression occuring as a subword of $s_1 s_2
\cdots s_k$. If $s_k$ does not contribute to that subword, then
$x'\leq xs=xy$ and we are done with $y'=\eW$. Hence assume that
$s_k=s$ contributes to the subword giving a reduced expression of
$x'$. Deleting the last letter gives a subword of $s_1 s_2\cdots
s_{k-1}=xs=xy$, hence with $y'=s$ we get the claim.

Now assume that $\ell(y)>1$. Let $s\in S$ be such that $ys < y$. By
induction on length, there is $u\leq ys$ such that $x'u\leq xys$. By
the case of length one, there is $v\leq s$ such that $x'uv \leq xy$.
Since $ys<y$, we have that $y':=uv$ has a (not necessarily reduced)
expression which occurs as a subword of a reduced expression of $y$.
Since every expression for an element of a Coxeter group has a
reduced expression for it as a subword, we have $y'\leq y$, and we
have already seen that $x'y'\leq xy$.
\end{proof}

\begin{proof}[Proof of Lemma \ref{repn}]
By assumption, there is $u\in [w]$ such that $u\leq w_1$. Write
$w_2=w_1 axx^*$ with $x\in W_I$, $a\in W_K$. It suffices to show
that there are $y\in W_I$, $b\in W_K$ such that $ubyy^*\leq w_1a
xx^*$.

Since $u\leq w_1$, Lemma~\ref{lem_product} yields an element $b\leq
a$ such that $ub\leq w_1a$. By Lemma~\ref{lem_product} again, there
is $y\leq x$ such that $uby\leq w_1a x$. Note that $x\in W_I$ and
$a\in W_K$, hence $y\in W_I$ and $b\in W_K$, because $W_I$ and $W_K$
are parabolic. Now since $w_1 ax\in W(I,J,K) W_K W_I$, we have that
$\ell(w_1 ax x^*)=\ell(w_1ax)+\ell(x^*)$. But $y\leq x$ implies that
$y^*\leq x^*$, hence $ubyy^*$ has a (not necessarily reduced)
expression which is a subword of a reduced expression of $w_1
axx^*$. It follows that $v:=ubyy^*\leq w_1 axx^*=w_2$, which is what
we wanted to show.
\end{proof}

\begin{proof}[Proof of Proposition \ref{P-order_abstract}]
Reflexivity is clear. We show antisymmetry. If $x,y\in W(I,J,K)$
satisfy $x\leq_{\mathcal{O}} y$ and $y\leq_{\mathcal{O}} x$, then
since elements of $W(I,J,K)$ have minimal length in their cosets
modulo $W_K W_{I,J}$, we get that $\ell(x)=\ell(y)$; as there is
$u\in [y]$ such that $u\leq x$, this forces $u=x$, hence $[x]=[y]$,
hence $x=y$.

We now show transitivity. Let $z \leq_{\mathcal{O}} y
\leq_{\mathcal{O}} x$. By definition of the relation
$\leq_\mathcal{O}$, there is $w\in [y]$ such that $w\leq x$. By
Lemma~\ref{repn}, there is $w'\in [z]$ such that $w'\leq w$. We then
have $w'\leq x$ with $w'\in [z]$, which is precisely the definition
of $z \leq_{\mathcal{O}} x$.
\end{proof}

\begin{example}\label{ex:O}
In type $A_3$ with $I=\{s_1\}$, $J=\{s_3\}$, $K=\emptyset$, the
order $\leq_{\mathcal{O}}$ on $W(I,J,K)$ is given in
Figure~\ref{fig:avant}.
\end{example}

\begin{figure}[htbp]
\begin{center}
\begin{tikzpicture}
\draw (1.9,0.2) -- (1,1.35); \draw (2.1,0.2) -- (3,1.35); \draw
(0.9,1.7) -- (0,2.85); \draw (0,3.2) -- (0,4.35); \draw (0,4.7) --
(0.9,5.85); \draw (1.1,6.2) -- (1.9,7.35); \draw (3.1,1.7) --
(4,2.85); \draw (4,3.2) -- (4,4.35); \draw (4,4.7) -- (3.1,5.85);
\draw (2.9,6.2) -- (2.1,7.35); \draw (1, 1.7) -- (1.9,2.85); \draw
(1.1,1.7) -- (3.9,2.85); \draw (3, 1.7) -- (2.1,2.85); \draw
(2.9,1.7) -- (0.1,2.85); \draw (0.1,3.2) -- (3.9,4.35); \draw
(1.9,3.2) -- (0.1,4.35); \draw (2,3.2) -- (2,4.35); \draw (3.9,3.2)
-- (2.1,4.35); \draw (0.1,4.7) -- (2.9,5.85); \draw (1.9,4.7) --
(1,5.85); \draw (3.9,4.7) -- (1.1,5.85); \draw (2.1,4.7) --
(3,5.85); \draw (2,0) node{\footnotesize $\eW$}; \draw (1,1.5)
node{\footnotesize $s_2$}; \draw (3,1.5) node{\footnotesize $s_1$};
\draw (0,3) node{\footnotesize $s_1 s_2$}; \draw (2,3)
node{\footnotesize $s_3 s_2$}; \draw (4,3) node{\footnotesize $s_2
s_1$}; \draw (0,4.5) node{\footnotesize $s_1 s_3 s_2$}; \draw
(2,4.5) node{\footnotesize $s_3 s_2 s_1$}; \draw (4,4.5)
node{\footnotesize $s_1 s_2 s_1$}; \draw (1,6) node{\footnotesize
$s_2 s_1 s_3 s_2$}; \draw (3,6) node{\footnotesize $s_1 s_3 s_2
s_1$}; \draw (2,7.5) node{\footnotesize $s_2 s_3 s_2 s_1 s_2$};
\end{tikzpicture}
\end{center}
\caption{The order $\leq_{\mathcal{O}}$ on $W(I,J,K)$ in type $A_3$
with $I=\{s_1\}$, $J=\{s_3\}$, $K=\emptyset$.} \label{fig:avant}
\end{figure}

\psfigure{
\begin{figure}[htbp]
\begin{center}
\begin{pspicture}(0,0)(4,7.6)
\psline(1.9,0.2)(1,1.35) \psline(2.1,0.2)(3,1.35)
\psline(0.9,1.7)(0,2.85) \psline(0,3.2)(0,4.35)
\psline(0,4.7)(0.9,5.85) \psline(1.1,6.2)(1.9,7.35)
\psline(3.1,1.7)(4,2.85) \psline(4,3.2)(4,4.35)
\psline(4,4.7)(3.1,5.85) \psline(2.9,6.2)(2.1,7.35) \psline(1,
1.7)(1.9,2.85) \psline(1.1,1.7)(3.9,2.85) \psline(3, 1.7)(2.1,2.85)
\psline(2.9,1.7)(0.1,2.85) \psline(0.1,3.2)(3.9,4.35)
\psline(1.9,3.2)(0.1,4.35) \psline(2,3.2)(2,4.35)
\psline(3.9,3.2)(2.1,4.35) \psline(0.1,4.7)(2.9,5.85)
\psline(1.9,4.7)(1,5.85) \psline(3.9,4.7)(1.1,5.85)
\psline(2.1,4.7)(3,5.85)
\rput(2,0){\footnotesize $\eW$} \rput(1,1.5){\footnotesize $s_2$}
\rput(3,1.5){\footnotesize $s_1$} \rput(0,3){\footnotesize $s_1
s_2$} \rput(2,3){\footnotesize $s_3 s_2$} \rput(4,3){\footnotesize
$s_2 s_1$} \rput(0,4.5){\footnotesize $s_1 s_3 s_2$}
\rput(2,4.5){\footnotesize $s_3 s_2 s_1$} \rput(4,4.5){\footnotesize
$s_1 s_2 s_1$} \rput(1,6){\footnotesize $s_2 s_1 s_3 s_2$}
\rput(3,6){\footnotesize $s_1 s_3 s_2 s_1$}
\rput(2,7.5){\footnotesize $s_2 s_3 s_2 s_1 s_2$}
\end{pspicture}
\end{center}
\caption{The order $\leq_{\mathcal{O}}$ on $W(I,J,K)$ in type $A_3$
with $I=\{s_1\}$, $J=\{s_3\}$, $K=\emptyset$.} \label{fig:avant}
\end{figure}
\medskip
}

\begin{remark}\label{rmq:fix}
{\rm (a)} Note that $W(I,J,K)=W^{J\cup K}$, and $W^{J\cup K}$ is
naturally ordered by the restriction of the Bruhat order on $W$. The
orders $\leq$ and $\leq_{\mathcal{O}}$ on $W(I,J,K)$ do \textit{not}
coincide in general. For instance, taking $W, I, J$ and $K$ as in
Example~\ref{ex:O}, we have $s_1\leq_{\mathcal{O}} s_3 s_2$, while
$s_1\not\leq s_3 s_2$.

{\rm (b)} As Lemma~\ref{repn} shows, the partial order $\leq_\cO$ is
also defined on the quotient $W/(W_KW_{I,J}) \simeq W(I,J,K)$ by the
equivalence $[w] \leq_\cO [w']$ if and only if $\forall u' \in [w'],
\exists u \in [w]:u \leq u'$.

{\rm (c)} Let $(W,S)$ be a Coxeter system, $L\subset S$ and $W_L$
the standard parabolic subgroup generated by $L$. Then $(W_L, L)$ is
a Coxeter system. Let $\theta$ be an automorphism of the Dynkin
diagram of $L$; it extends to an automorphism of the Coxeter group
$(W_L, L)$, which we also denote by $\theta$. Let $W_L^\theta$ be
the subgroup of $\theta$-fixed points of $W_L\subset W$. We can
generalize the partial order $\leq_{\mathcal{O}}$ to $W/W_L^\theta$
as follows: for $u, v\in W$, we have $uW_L^\theta \leq_{\mathcal{O}}
v W_L^\theta$ if and only if for every $w\in v W_L^\theta$, there is
$w'\in u W_L^\theta$ such that $w'\leq w$. In the case described
above, we have $L=I\cup K \cup J$, and the automorphism $\theta$ of
$W_L=W_I \times W_K \times W_J$ is given by $(x,y,z^*)\mapsto
(z,y,x^*)$. We have $W_L^\theta = W_K W_{I,J}$. The fact that the
two partial orders coincide follows from Lemma~\ref{repn} (or from
part {\rm (b)} of this remark).
\end{remark}

\subsection{Elements of minimal length in $[w]$}

\label{section-minimal-length} Given $w\in W(I,J,K)$, the left coset
$[w]$ does not have a single representative of minimal length in
general. But $w$ has minimal length in $[w]$: indeed, writing $w=
w_1 w_2$ with $w_1\in W^{I\cup J\cup K}$, $w_2\in W_I$, then for any
$x\in W_I$, $a\in W_K$ and $w'=w_1 w_2 a xx^*$ we have that
$$\ell(w')=\ell(w_1)+\ell(w_2x)+\ell(x^*)+\ell(a)=\ell(w_1)+\ell(w_2
x)+\ell(x)+\ell(a),$$ and since $\ell(w_2 x)+\ell(x) \geq \ell(w_2)$
we get that $\ell(w')\geq\ell(w)$. Moreover, every element of
minimal length in $[w]$ lies in $W^K$, but the converse does not
hold in general.

For $w\in W(I,J,K)$, let $$\mathrm{Min}(w)=\{ u\in [w]:
\ell(u)=\ell(w)\}$$ be the set of elements of minimal length in the
coset $[w]$. On the basis of the above calculation, we have the
following easy results:

\begin{lemma}
\label{lemma:min} Let $w=w_1w_2 \in W(I,J,K)$ with $w_1\in W^{I\cup
J\cup K}$ and $w_2\in W_I$.
\begin{itemize}
 \item[\rm (a)] $\displaystyle \mathrm{Min}(w)=\{ wxx^{*}: x\in W_I,\ \ell(w_2x)+\ell(x^{-1})=\ell(w_2)\}$;
 \item[\rm (b)] $\displaystyle \mathrm{Min}(w)=w_1 \cdot \mathrm{Min}(w_2)$.
\end{itemize}
\end{lemma}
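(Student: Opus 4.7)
The plan is to reduce both parts to a direct length computation. Let me first unpack what elements of $[w]$ look like: since $[w] = w W_K W_{I,J}$ and $W_K$, $W_{I,J}$ commute (because $K$ and $I \cup J$ are disconnected), every element of $[w]$ can be written as $waxx^*$ with $a \in W_K$ and $x \in W_I$. So I want to determine which choices of $(a,x)$ minimize the length of $waxx^*$.

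First I would compute $\ell(waxx^*)$. Write $w = w_1 w_2$ as in the statement; then $w_2 a x x^* \in W_{I \cup J \cup K}$, and since $w_1 \in W^{I \cup J \cup K}$ we get
\[\ell(w a x x^*) = \ell(w_1) + \ell(w_2 a x x^*).\]
Next, using the direct product decomposition $W_{I\cup J\cup K} = W_I \times W_J \times W_K$ (which follows from condition (2) on $I,J,K$) and the fact that the Coxeter isomorphism $\sigma\colon W_I \to W_J$ is length-preserving, we find
\[\ell(w_2 a x x^*) = \ell(w_2 x) + \ell(a) + \ell(x^*) = \ell(w_2 x) + \ell(a) + \ell(x^{-1}).\]
Combining with $\ell(w) = \ell(w_1) + \ell(w_2)$ gives
\[\ell(waxx^*) - \ell(w) = \bigl(\ell(w_2 x) + \ell(x^{-1}) - \ell(w_2)\bigr) + \ell(a).\]
Both summands on the right are nonnegative: the first by the triangle inequality $\ell(w_2) = \ell((w_2 x) x^{-1}) \leq \ell(w_2 x) + \ell(x^{-1})$, and the second trivially. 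Minimality therefore forces $a = \eW$ and $\ell(w_2 x) + \ell(x^{-1}) = \ell(w_2)$, proving part~(a).

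For part~(b), observe that $w_2 \in W_I \subseteq W^{J \cup K} = W(I,J,K)$ (the inclusion holds because $I$ is disconnected from $J \cup K$). Viewing $w_2$ as an element of $W(I,J,K)$, its decomposition from the parabolic factorization is the trivial one $w_2 = \eW \cdot w_2$, so applying~(a) to $w_2$ yields
\[\mathrm{Min}(w_2) = \{w_2 x x^* : x \in W_I,\ \ell(w_2 x) + \ell(x^{-1}) = \ell(w_2)\}.\]
Left-multiplying by $w_1$ gives precisely the set described by~(a) for $w$, hence $\mathrm{Min}(w) = w_1 \cdot \mathrm{Min}(w_2)$.

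There is no real obstacle here: once the length formula is set up, both statements drop out from the triangle inequality. The only subtle point is recognizing that the length bound $\ell(w_2 x) + \ell(x^{-1}) \geq \ell(w_2)$ is saturated exactly when there is a reduced expression of $w_2$ ending with a reduced expression of $x^{-1}$, which is the combinatorial content of the minimality condition in~(a).
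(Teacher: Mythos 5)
Your proof is correct and follows essentially the same route as the paper: the paper establishes the length formula $\ell(w_1w_2axx^*)=\ell(w_1)+\ell(w_2x)+\ell(x)+\ell(a)$ in the paragraph preceding the lemma and derives both parts from it exactly as you do, via the inequality $\ell(w_2x)+\ell(x)\geq\ell(w_2)$ and the observation that minimality forces $a=\eW$. Your reduction of (b) to (a) via $W_I\subset W^{J\cup K}=W(I,J,K)$ is also the intended argument.
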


In particular, the set $\mathrm{Min}(w)$ is in bijection with the
set of elements lying below $w_2$ for the right weak order, and we
have $|\mathrm{Min}(w)|=1$ iff $w_2=\eW$ iff $w\in W^{I\cup J\cup
K}$.

\begin{lemma}
\label{minimal_below} Let $w\in W(I,J,K)$, $v\in [w]$. There is
$u\in \mathrm{Min}(w)$ such that $u\leq v$. In other words,
$\mathrm{Min}(w)$ is also characterized as being the set of elements
in $[w]$ which are minimal for the Bruhat order.
\end{lemma}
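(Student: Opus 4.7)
The plan is to argue by induction on $\ell(v)$. When $\ell(v) = \ell(w)$, the element $v$ lies in $\mathrm{Min}(w)$ by the characterization established just before the statement, so $u = v$ works. For $\ell(v) > \ell(w)$, it suffices to produce $v' \in [w]$ with $v' < v$ in Bruhat order, since the inductive hypothesis applied to $v'$ then yields $u \in \mathrm{Min}(w)$ with $u \leq v' < v$.

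Write $v = w a x x^*$ with $a \in W_K$ and $x \in W_I$; by commutativity of $W_K$ with $W_I \cup W_J$ we have $\ell(v) = \ell(w_1) + \ell(w_2 x) + \ell(x) + \ell(a)$, where $w = w_1 w_2$ with $w_1 \in W^{I\cup J\cup K}$ and $w_2 \in W_I$. If $a \neq \eW$, I pick a right descent $s \in K$ of $a$ and set $v' := w(as)xx^*$; the commutativity lets me arrange a reduced expression for $v$ that ends with $s$, and deleting that terminal $s$ exhibits $v'$ as lying strictly below $v$ via the subword property.

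In the substantive case $a = \eW$ we have $v = wxx^*$ with $\ell(w_2 x) + \ell(x) > \ell(w_2)$. I will build $y \in W_I$ with $y < x$ and $w_2 y \leq w_2 x$ and then set $v' := wyy^*$; the parabolic factorization $W = W^{I\cup J\cup K}\cdot(W_I \times W_J \times W_K)$, together with the coordinate-wise nature of Bruhat order on the product $W_I \times W_J \times W_K$, reduces $v' < v$ to the component inequalities $y^* < x^*$ (equivalent to $y < x$) and $w_2 y \leq w_2 x$. To construct $y$, I fix a reduced expression $x = s_{i_1} \cdots s_{i_k}$ and track $f_j := \ell(w_2 s_{i_1} \cdots s_{i_j})$. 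Since consecutive differences are $\pm 1$ and sum to $\ell(w_2 x) - \ell(w_2)$, the number of ``up-steps'' equals $\tfrac12(\ell(w_2 x) + \ell(x) - \ell(w_2))$, which is positive by hypothesis; pick any index $j^*$ with $f_{j^*} = f_{j^*-1}+1$, and set $y := s_{i_1} \cdots \widehat{s_{i_{j^*}}} \cdots s_{i_k}$, so that $y < x$ at once from the subword property.

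The main obstacle is verifying the strict inequality $w_2 y < w_2 x$. Put $a' := w_2 s_{i_1} \cdots s_{i_{j^*-1}}$, $s := s_{i_{j^*}}$, and $z := s_{i_{j^*+1}} \cdots s_{i_k}$, so $w_2 x = a'sz$ and $w_2 y = a'z$, while the up-step condition is exactly $s \notin D_R(a')$. The identity $a'sz = a'z \cdot (z^{-1} s z)$ shows that $w_2 x$ and $w_2 y$ differ by right multiplication by the reflection $r := z^{-1} s z$. Because $x = (\text{prefix})\cdot s\cdot z$ is reduced, $s$ is not a left descent of $z$, hence $\beta := z^{-1}(\alpha_s)$ is a positive root; the direct computation $(a'sz)(\beta) = a's(\alpha_s) = -a'(\alpha_s)$ is negative because $s \notin D_R(a')$ forces $a'(\alpha_s) > 0$. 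The standard correspondence $\ell(w t_\beta) < \ell(w) \iff w(\beta) < 0$ (for $\beta$ positive) then gives $\ell(a'sz \cdot r) < \ell(a'sz)$, i.e., $\ell(w_2 y) < \ell(w_2 x)$, and the reflection-cover property of Bruhat order implies $w_2 y < w_2 x$, completing the descent step and the induction.
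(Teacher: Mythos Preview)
Your argument is correct. Both you and the paper reduce first to the case $a=\eW$ (you peel off one $K$-descent, the paper drops the whole $W_K$-factor in one step using $\ell(v)=\ell(v')+\ell(a)$), and then iteratively shorten $v=w_1w_2xx^*$ by producing some $y\in W_I$ with $w_1w_2yy^*<v$ still in $[w]$. The difference lies in how $y$ is manufactured.

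The paper concatenates a reduced word for $w_2x$ with one for $x^{-1}$, observes that the concatenation is not reduced (since $\ell(w_2x)+\ell(x)>\ell(w_2)$), and applies the exchange lemma to cancel one letter on each side; a short subword manipulation then exhibits $z=w_1(s_1\cdots\widehat{s_j}\cdots s_k)(s_m^*\cdots\widehat{s_i^*}\cdots s_{k+1}^*)\in[w]$ with $z<v$. Your construction instead tracks the length profile $f_j=\ell(w_2\cdot(\text{prefix of }x))$ along a reduced word for $x$, locates an up-step, deletes that letter to get $y<x$, and then verifies $w_2y<w_2x$ by the root-theoretic criterion $w t_\beta<w\Leftrightarrow w(\beta)<0$ (with $\beta=z^{-1}\alpha_s>0$ coming from reducedness of the suffix $sz$ of $x$). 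The final comparison $v'<v$ via the product Bruhat order on $W_I\times W_J$ and order-preservation of left multiplication by $w_1\in W^{I\cup J\cup K}$ is standard.

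So the paper's proof is purely word-combinatorial (exchange lemma plus subword property), while yours is more geometric, trading the exchange-lemma bookkeeping for a one-line root computation. Your approach makes the inequality $w_2y<w_2x$ transparent and avoids tracking two intertwined deletions; the paper's approach has the virtue of staying entirely within the Coxeter-word formalism without invoking roots.
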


\begin{proof}
Let $w=w_1 w_2$ as above. Let $v=w_1 w_2 a xx^*$, with $a\in W_K$,
$x\in W_I$. Note that $v':=w_1 w_2 xx^* \leq v$ since $v=v'a$ and
$\ell(v)=\ell(v')+\ell(a)$. If $v'\in\mathrm{Min}(w)$, then we are
done. Assume that this is not the case: we then claim that there is $y\in
W_I$ such that $w_1 w_2 y y^* < w_1 w_2 x x^*$, which is enough to
conclude, since we can then iterate until reaching an element of
minimal length.

To show the claim, let $s_1 s_2 \cdots s_k$ be a reduced expression
of $w_2x$, and $s_{k+1} \cdots s_m$ be a reduced expression of
$x^{-1}$. We have $\ell(w_2xx^*)=\ell(w_2x)+\ell(x^*)=m$ since $I$
and $J$ are disconnected.

Since we assume that $w_1w_2xx^*\notin\mathrm{Min}(w)$, we have
$\ell(w_2 x)+\ell(x^{-1})>\ell(w_2)$ (by Lemma \ref{lemma:min}),
hence the expression $s_1 s_2 \cdots s_m$ is not reduced. Since both
$s_1 s_2\cdots s_k$ and $s_{k+1}\cdots s_m$ are reduced, there
exists a minimal $i\in\{k+1, \dots, m\}$ such that $s_1 s_2 \cdots
s_i$ is not reduced, and by the exchange lemma we have $s_1 s_2
\cdots s_i$= $s_1 s_2 \cdots \widehat{s_j} \cdots s_{i-1}$ for some
$j\in \{1, \dots, k\}$, where the notation $\widehat{s_j}$ indicates
that $s_j$ is removed. It follows that
$$z':=w_1 w_2 x x^* (s_{k+1}\cdots s_i) (s_{k+1}^*\cdots s_i^*)=w_1 (s_1 \cdots \widehat{s_j} \cdots s_{i-1})
(s_m^* s_{m-1}^*\cdots s_{i+1}^*).$$ Multiplying $z'$ on the right
by $(s_{i-1} s_{i-2} \cdots s_{k+1})(s_{i-1}^* s_{i-2}^* \cdots
s_{k+1}^*)$ we get an element
$$z:=w_1 (s_1 \cdots \widehat{s_j} \cdots s_{k}) (s_m^* s_{m-1}^*\cdots \widehat{s_{i}^*} \cdots s_{k+1}^*)\in [w].$$
We have $(s_1 \cdots \widehat{s_j} \cdots s_{k}) < w_2x= s_1 \cdots
s_k$ and $(s_m^* s_{m-1}^*\cdots \widehat{s_{i}^*} \cdots s_{k+1}^*)
< x^*= s_m^*\cdots s_{k+1}^*$, hence since $w_1 \in W^{I\cup J\cup
K}$, $w_2x\in W_I$ and $x^*\in W_J$, we get $z  < w_1 w_2 x x^*$.
Since $z= w_1 w_2 y y^*$ where $y=x (s_{k+1} s_{k+2}\cdots s_i
s_{i-1} \cdots s_{k+1})$, the proof is complete.
\end{proof}

If $\theta$ is an automorphism of a Coxeter group $(W,S)$ induced by
an automorphism of the Dynkin diagram, and if each orbit of $\theta$
in $S$ generates a finite group, then it is well known that the
subgroup of fixed points $W^\theta$ is again a Coxeter group (see
for instance~\cite{Muhlh}). Moreover, the canonical generators of
$W^\theta$ are obtained as follows. For each orbit of $\theta$ on
the simple system $S$, consider the standard parabolic subgroup
generated by the simple reflections in this orbit. Take the longest
element in this subgroup (which is finite by assumption). Then the
set of all such longest elements, for all orbits, forms the simple
system of the subgroup of fixed points.

In our case, considering the automorphism $\theta$ of $L=I\cup J\cup
K$ as in Remark~\ref{rmq:fix}(c), we have $W_L^\theta=W_K W_{I,J}$;
the orbits of $\theta$ are of the form $\{s\}$ for $s\in K$ or
$\{s,s^*\}$ for $s\in I$. Hence the corresponding generators of $W_K
W_{I,J}$ viewed as Coxeter group are given by $\{s: s\in K\}\cup\{
ss^*: s\in I\}$. We write $\Theta=\{ ss^*: s\in I\}$.

\begin{lemma}\label{lem_min}
Let $u,u'\in \mathrm{Min}(w)$. There is a sequence $$u,\ ux_1,\ ux_1
x_2,\ \dots,\ ux_1 x_2\cdots x_k=u'$$ with $x_1, x_2, \ldots,
x_k\in\Theta$ such that for all $i$, $u x_1 x_2\cdots x_i\in
\mathrm{Min}(w)$. In other words, any two elements in
$\mathrm{Min}(w)$ can be related by multiplying on the right by a
sequence of generators of $W_K W_{I,J}$ coming only from $\Theta$,
and such that at each step, the obtained element still has minimal
length in $[w]$.
\end{lemma}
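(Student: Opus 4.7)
The plan is to reduce to the case where $w = w_2 \in W_I$, then parametrize $\mathrm{Min}(w)$ by a downward-closed subset of the right weak order on $W_I$, and finally observe that right multiplication by generators of $\Theta$ corresponds to the obvious moves in this parameterization.

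By Lemma \ref{lemma:min}(b), $\mathrm{Min}(w) = w_1 \cdot \mathrm{Min}(w_2)$, so left-multiplying by $w_1^{-1}$ reduces the statement to the case $w = w_2 \in W_I$. By Lemma \ref{lemma:min}(a), the map $x \mapsto wxx^*$ then gives a bijection between
\[
X := \{x \in W_I : \ell(wx) + \ell(x) = \ell(w)\}
\]
and $\mathrm{Min}(w)$. The core computation is that, for any $s \in I$, using the commutation $sx^* = x^*s$ (which holds because $s \in I$, $x^* \in W_J$, and $I,J$ are disjoint and disconnected by assumption (2)), one has
\[
u \cdot (ss^*) \;=\; wxx^*ss^* \;=\; w(xs)(x^*s^*) \;=\; w(xs)(xs)^*.
\]
Thus right multiplication of $u \in \mathrm{Min}(w)$ by $ss^* \in \Theta$ corresponds exactly to right multiplication of $x$ by $s$ in $W_I$, and stays inside $\mathrm{Min}(w)$ if and only if $xs$ stays inside $X$.

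The problem therefore reduces to connecting any two elements $x, x' \in X$ by a sequence of simple right multiplications (by elements of $I$) staying inside $X$. For this, I would show that $X$ is downward closed in the right weak order $\leq_R$ of $W_I$: if $y \leq_R x$, written $x = yz$ with $\ell(x) = \ell(y) + \ell(z)$, then $\ell(wy) \leq \ell(wx) + \ell(z) = \ell(w) - \ell(y)$, while the general inequality $\ell(wy) \geq \ell(w) - \ell(y)$ holds; hence equality and $y \in X$. Since $\eW \in X$, the desired path can be obtained by first descending from $x$ to $\eW$ along a reduced expression of $x$, then ascending from $\eW$ to $x'$ along a reduced expression of $x'$; every intermediate element lies in $[\eW,x]_R \cup [\eW,x']_R \subseteq X$ by downward closedness.

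The main (minor) obstacle is the commutation identity $sx^* = x^*s$ together with $(xs)^* = x^*s^*$, both of which are immediate from the disconnectedness of $I$ and $J$. Once these are in hand, the argument consists only of the downward closedness of $X$ and the standard fact that a lower interval in the weak order is connected by single-generator moves.
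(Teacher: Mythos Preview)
Your proof is correct and follows essentially the same approach as the paper: both arguments connect an arbitrary $u\in\mathrm{Min}(w)$ to $w$ itself (i.e., to the element corresponding to $x=\eW$) by peeling off a reduced expression of $x$ one simple reflection at a time. The paper phrases this as an explicit length computation at each step, while you package the same content as ``$X$ is downward closed in the right weak order''; your formulation is slightly more conceptual but the underlying path and verification are identical.
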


\begin{proof}
Since the elements of $\Theta$ have order two, it suffices to show
the claim for $u'=w$. Let $w=w_1 w_2$ as above. By Lemma
\ref{lemma:min}, there is $x\in W_I$ such that
$\ell(w_2x)+\ell(x^{-1})=\ell(w_2)$ and $u=w xx^*=w_1 w_2x x^*$. Let
$x=s_k s_{k-1} \cdots s_2 s_1$ be a reduced expression of $x$.
Setting $x_i=s_i s_i^*$ and using the fact that $I$ and $J$ are
disconnected and that $\ell(w_2x)+\ell(x^{-1})=\ell(w_2)$ (hence
that $w_2$ has a reduced expression ending by $x^{-1}$), we obtain
$ux_1x_2\cdots x_i= w_1 w_2 s_k s_{k-1} \cdots s_{i+1} s_k^*
s_{k-1}^* \cdots s_{i+1}^*$ and
\begin{eqnarray*}
\ell(u x_1x_2\cdots x_i)&=&\ell(w_1)+ \ell(w_2 s_k s_{k-1} \cdots s_{i+1}) +\ell(s_k^* s_{k-1}^* \cdots s_{i+1}^*)\\
&=&\ell(w_1)+\ell(w_2)-(k-i) +(k-i)=\ell(w_1 w_2)=\ell(w),
\end{eqnarray*}
which concludes the proof.
\end{proof}

\begin{lemma}\label{lemm:couverture}
Let $w \in W(I,J,K)$. Let $u \in \mathrm{Min}(w)$, $u' \in W$ and $s
\in I$. Assume that $u' < u$ and $\ell(uss^*)=\ell(u)$. Then there
exists $y$ in $\{u',u'ss^*\}\subset [u']$ such that $\ell(y)\leq
\ell(u')$ and $y < uss^*$. In particular, if $u'\in\mathrm{Min}(w')$
for some $w'\in W(I,J,K)$, then $y\in\mathrm{Min}(w')$.
\end{lemma}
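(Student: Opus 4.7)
The plan is to analyze reduced expressions and use the subword characterization of the Bruhat order. A key point is that $s$ and $s^*$ commute: indeed $s\in I$ and $s^*\in J$ with $I$ and $J$ disconnected in $I\cup J\cup K$, so $ss^*=s^*s$. The hypothesis $\ell(uss^*)=\ell(u)$, together with $\ell(us)=\ell(u)\pm 1$ and $\ell(uss^*)=\ell(us)\pm 1$, forces exactly one of $us,us^*$ to have length $\ell(u)-1$ and the other length $\ell(u)+1$. Because of the symmetric roles played by $s$ and $s^*$ in the product $uss^*=us^*s$, I may assume $us<u$ and $us^*>u$; the other case is entirely analogous after exchanging $s$ and $s^*$.

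Under this assumption, I fix a reduced expression $s_1\cdots s_{k-1}$ of $us$. Then $u=s_1\cdots s_{k-1}\,s$ and $uss^*=s_1\cdots s_{k-1}\,s^*$ are both reduced expressions, each of length $k$. From $u'<u$, the subword property yields a reduced expression of $u'$ obtained as a subword of $s_1\cdots s_{k-1}\,s$. I then split into two cases according to whether the terminal letter $s$ appears in this subword.

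If the terminal $s$ is not used, the same subword is also a subword of the reduced expression $s_1\cdots s_{k-1}\,s^*$ of $uss^*$, so $u'\leq uss^*$; strictness follows from $\ell(u')<\ell(u)=\ell(uss^*)$, and I take $y=u'$. If the terminal $s$ is used, write $u'=u''s$, where $u''$ has a reduced expression which is a subword of $s_1\cdots s_{k-1}$, with $\ell(u'')=\ell(u')-1$; this subword is strict because $u''=us$ would give $u'=u$, contradicting $u'<u$. The commutation $ss^*=s^*s$ then gives $u'ss^*=u''s^*$, whose expression is visibly a subword of $s_1\cdots s_{k-1}\,s^*$, so $u'ss^*\leq uss^*$. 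Strictness again follows from $\ell(u''s^*)\leq \ell(u'')+1=\ell(u')<\ell(u)=\ell(uss^*)$, and the required length bound $\ell(u'ss^*)\leq \ell(u')$ is automatic. Thus $y=u'ss^*$ works.

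For the final assertion, if $y=u'$ there is nothing to do; if $y=u'ss^*$, then $ss^*\in W_{I,J}\subset W_KW_{I,J}$, so $y\in[u']=[w']$, and the combination of $\ell(y)\leq \ell(u')=\ell(w')$ with the minimality of $\ell(w')$ on $[w']$ forces $\ell(y)=\ell(w')$, i.e., $y\in\mathrm{Min}(w')$. The only subtle point in this plan is ensuring that the inclusion $u''<us$ in the second case is strict, which is what drives the strict inequality $y<uss^*$ via a length comparison rather than a direct subword manipulation; the rest is a routine application of the subword property together with the commutation of $s$ and $s^*$.
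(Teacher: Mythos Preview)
Your proof is correct and follows essentially the same approach as the paper's: reduce to the case where one of $us,us^*$ is shorter than $u$, choose a reduced expression of $u$ ending in that letter, and split according to whether the subword for $u'$ uses the terminal letter. The paper handles the reduction to $\ell(us)=\ell(u)-1$ via the exchange lemma (showing directly that if $\ell(us)=\ell(u)+1$ then $\ell(us^*)=\ell(u)-1$), whereas you invoke the dichotomy ``exactly one of $us,us^*$ has length $\ell(u)-1$''; your stated justification for this dichotomy is a bit terse (the relations $\ell(uss^*)=\ell(us)\pm1$ alone do not pin down $\ell(us^*)$), but it follows immediately from the parabolic decomposition $u=u^{\{s,s^*\}}u_{\{s,s^*\}}$ with $W_{\{s,s^*\}}\cong(\mathbb{Z}/2)^2$, so this is only a minor expository gap.
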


\begin{proof}
First, assume that $\ell(us)=\ell(u)-1$. Let $u=s_1 s_2\cdots s_k$
be a reduced expression of $u$ with $s_k=s$. Since $u'< u$, there is
a subword of $s_1 s_2\cdots s_k$ which is a reduced expression for
$u'$, say $s_{i_1} s_{i_2} \cdots s_{i_m}$, $1\leq i_1 < i_2 < \dots
<\ i_m \leq k$. If $k= i_m$, then $u'ss^*=s_{i_1} s_{i_2} \cdots
s_{i_{m-1}}s^*$ which is a subexpression of the expression
$uss^*=s_1 s_2\cdots s_{k-1} s^*$, the latter being reduced since
$\ell(uss^*)=\ell(u)$, hence $y:=u'ss^*$ satisfies $y< uss^*$ and
$\ell(y)\leq \ell(u')$. If $k\neq i_m$, then $u'=s_{i_1}
s_{i_2}\cdots s_{i_m}$ is a subexpression of the reduced expression
$uss^*=s_1 s_2\cdots s_{k-1} s^*$, hence $y:=u'$ satisfies $y<
uss^*$.

Now if $\ell(us)=\ell(u)+1$, then taking a reduced expression
$u=s_1\cdots s_k$, the exchange lemma implies that
$uss^*=s_1\cdots\widehat{s_j}\cdots s_ks$ for some
$j\in\{1,\ldots,k\}$ (since $\ell(uss^*)=\ell(u)$), hence
$us^*=s_1\cdots\widehat{s_j}\cdots s_k$ (since $s$ and $s^*$
commute). We then
have $\ell(us^*)=\ell(u)-1$, and we can argue as above replacing $s$
by $s^*$.
\end{proof}

\begin{corollary}\label{coro:no_matter_which_minimal_above}
Let $w,w'\in W(I,J,K)$. The following conditions are equivalent.
\begin{itemize}
\item[\rm (i)] $w'<_{\mathcal{O}}w$;
\item[\rm (ii)] There are $u\in\mathrm{Min}(w)$ and $u'\in[w']$ such that $u'<u$;
\item[\rm (iii)] For all $u\in\mathrm{Min}(w)$, there is $u'\in[w']$
such that $u'<u$.
\end{itemize}
\end{corollary}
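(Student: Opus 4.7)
The plan is to establish the cycle (i)$\Rightarrow$(iii)$\Rightarrow$(ii)$\Rightarrow$(i), relying essentially on the preparatory lemmas \ref{repn}, \ref{lem_min} and \ref{lemm:couverture}. The implication (iii)$\Rightarrow$(ii) is immediate since $\mathrm{Min}(w)$ is nonempty (it contains $w$ itself).

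For (i)$\Rightarrow$(iii), I would fix an arbitrary $u \in \mathrm{Min}(w) \subset [w]$ and apply Lemma \ref{repn} to produce $u' \in [w']$ with $u' \leq u$. The inequality must be strict: $u'=u$ would force $u \in [w'] \cap [w]$, hence $[w']=[w]$, hence $w'=w$ since both are canonical representatives in $W(I,J,K)$, contradicting $w'<_\mathcal{O}w$.

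The core implication is (ii)$\Rightarrow$(i). Given $u \in \mathrm{Min}(w)$ and $u' \in [w']$ with $u'<u$, Lemma \ref{lem_min} furnishes a chain
\[
u = u_0,\quad u_1 = u_0 x_1,\quad u_2 = u_1 x_2,\quad \ldots,\quad u_k = u_{k-1}x_k = w
\]
lying entirely in $\mathrm{Min}(w)$, with each $x_i \in \Theta = \{ss^* : s \in I\}$. The idea is to push $u'$ forward along this chain by iterated application of Lemma \ref{lemm:couverture}. Starting from $u'_0 := u'$, at each step I would invoke the lemma with the pair $(u_{i-1}, u'_{i-1})$ and the generator $x_i = ss^*$; the required equality $\ell(u_{i-1}x_i) = \ell(u_{i-1})$ holds automatically because $u_{i-1}$ and $u_i$ both lie in $\mathrm{Min}(w)$ and hence share the same length. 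This yields an element $u'_i \in \{u'_{i-1},\ u'_{i-1}x_i\} \subset [w']$ with $u'_i < u_i$. After $k$ steps, $u'_k \in [w']$ satisfies $u'_k < w$, which is precisely the definition of $w' \leq_\mathcal{O} w$. To upgrade to strict inequality, one observes that $\ell(u'_k) < \ell(w)$, while every element of $[w]$ has length at least $\ell(w)$ (since $w \in \mathrm{Min}(w)$), so $u'_k \notin [w]$, which forces $[w'] \neq [w]$ and hence $w' \neq w$.

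No serious obstacle is expected: all of the combinatorial work has already been done in Lemma \ref{lem_min} (supplying the chain of minimal-length representatives connected by $\Theta$-moves) and Lemma \ref{lemm:couverture} (supplying the inductive step that transports the strict inequality along a single $\Theta$-move). The corollary is essentially the assertion that these two ingredients can be composed.
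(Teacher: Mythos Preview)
Your proof is correct and uses essentially the same approach as the paper: the substantive step transports the strict Bruhat inequality along a chain in $\mathrm{Min}(w)$ via Lemmas~\ref{lem_min} and~\ref{lemm:couverture}. The only difference is the direction of the cycle: the paper runs (iii)$\Rightarrow$(i)$\Rightarrow$(ii)$\Rightarrow$(iii), treating the first two implications as immediate from the definition and using the chain argument to prove (ii)$\Rightarrow$(iii) for an \emph{arbitrary} target in $\mathrm{Min}(w)$, whereas you target $w$ specifically in (ii)$\Rightarrow$(i) and then close the loop via Lemma~\ref{repn}.
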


\begin{proof}
The implications (iii) $\Rightarrow$ (i) $\Rightarrow$ (ii)
immediately follow from the definition of the order
$\leq_\mathcal{O}$. For showing (ii)$\Rightarrow$(iii), assume
$u_0\in\mathrm{Min}(w)$ and $u'_0\in[w']$ such that $u'_0<u_0$, and
let $u\in\mathrm{Min}(w)$. By Lemma~\ref{lem_min}, the element $u$
can be reached from $u_0$ by applying a sequence of elements of
$\Theta$ at the right of $u_0$, without increasing the length.
Applying Lemma \ref{lemm:couverture} inductively, we find $u'\in
[w']$ such that $u'< u$.
\end{proof}

Let $$\mathcal{M}=\bigsqcup_{w\in W(I,J,K)} \mathrm{Min}(w).$$ In
the rest of Section \ref{section-minimal-length}, we establish some
properties of the set $\mathcal{M}$. These properties are used in
Section~\ref{section-cover-relations} to study the cover
relations of the order $\leq_\mathcal{O}$.

For $v,w\in W$, we write $v\leq_R w$ if
$\ell(w)=\ell(wv^{-1})+\ell(v)$, that is, if $w$ has a reduced
expression ending with a reduced expression of $v$ (this defines the
so-called \textit{right weak order} on $W$).

Given a subset $L\subset S$, by the parabolic decomposition
$W=W^LW_L$, every $w\in W$ can be written as $w=w^L w_L$ with unique
$w^L\in W^L$ and $w_L\in W_L$. Note that $w_L$ is also characterized
as being the unique element in $W_L$ which is maximal with respect
to $\leq_R$ and such that $w_L\leq_R w$.

Let $T\subset W$ denote the set $\bigcup_{w\in W} wSw^{-1}$, i.e.,
the set of reflections in $W$. For $u, u'\in W$, we write $u'\leqdot
u$ if $u$ covers $u'$ in the (strong) Bruhat order on $W$. Note
that, in such a case, we must have $u'^{-1}u\in T$.

\begin{lemma}\label{lem:cover_right}
Let $L\subset S$. Let $w\in W$, $t\in T$ such that $t\notin W_L$ and
$w \leqdot wt$. Then $(wt)_L \leq_R w_L$.
\end{lemma}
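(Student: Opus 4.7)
The plan is to use the subword property of the Bruhat order together with the uniqueness of the parabolic decomposition $W = W^L W_L$ to force a very specific way in which $w$ sits as a subexpression inside $wt$.

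First I would fix a reduced expression of $wt$ adapted to the parabolic decomposition: write $wt = (wt)^L \cdot (wt)_L$ with $\ell(wt) = \ell((wt)^L) + \ell((wt)_L)$, and concatenate reduced expressions of the two factors to obtain a reduced expression $wt = a_1 a_2 \cdots a_m b_1 b_2 \cdots b_k$, where $a_1 \cdots a_m$ is reduced for $(wt)^L$ and $b_1 \cdots b_k$ is reduced for $(wt)_L$ (in particular each $b_i \in L$). Since $w \leqdot wt$, the subword property (as recalled before Lemma \ref{lem:cover_right}) guarantees that $w$ is obtained from this reduced expression by deleting exactly one letter.

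Next I would carry out a dichotomy on which letter is deleted. If the deleted letter is one of the $a_j$'s, then $w$ admits the reduced expression $a_1 \cdots \widehat{a_j} \cdots a_m b_1 \cdots b_k$, whose suffix $b_1 \cdots b_k$ is a reduced expression of $(wt)_L$; hence $(wt)_L \leq_R w$, and the characterization of $w_L$ as the $\leq_R$-maximal element of $W_L$ lying $\leq_R w$ forces $(wt)_L \leq_R w_L$, as desired. If instead the deleted letter is some $b_j$, then $w = (a_1 \cdots a_m)(b_1 \cdots \widehat{b_j} \cdots b_k)$ with $(a_1 \cdots a_m) \in W^L$, $(b_1 \cdots \widehat{b_j} \cdots b_k) \in W_L$, and additive lengths; by uniqueness of the parabolic decomposition this would give $w^L = (wt)^L$ and $w_L = b_1 \cdots \widehat{b_j} \cdots b_k$, whence $t = w^{-1}(wt) = w_L^{-1} (wt)_L \in W_L$, contradicting the hypothesis $t \notin W_L$. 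This rules out the second case.

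The main obstacle is making the bookkeeping in the second case fully rigorous: one needs the deletion to actually land inside $W_L$ in the strong sense that it produces a parabolic decomposition with additive lengths, which is precisely what allows the uniqueness argument to extract the contradiction $t \in W_L$. Once this is clear, the hypothesis that $w \leqdot wt$ (rather than just $w < wt$) ensures the deletion is of a single letter, and the argument closes cleanly without any further combinatorial input.
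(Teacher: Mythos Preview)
Your proof is correct and follows essentially the same approach as the paper's: both fix a reduced expression of $wt$ adapted to the parabolic decomposition, delete a single letter, and rule out deletion from the $W_L$-part by deriving $t\in W_L$. The only cosmetic difference is in this last step: the paper reads off directly that deleting $b_j$ yields $t = b_k b_{k-1}\cdots b_j\cdots b_{k-1}b_k\in W_L$, whereas you invoke uniqueness of the parabolic decomposition to get $w^L=(wt)^L$ and conclude $t=w_L^{-1}(wt)_L\in W_L$; both are equally valid.
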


\begin{proof}
We choose a reduced expression $u_1 u_2 \cdots u_\ell s_1 s_2\cdots
s_k$ of $wt$ such that $u_1 \cdots u_\ell$ is a reduced expression
of $(wt)^L$ and $s_1 \cdots s_k$ is a reduced expression of
$(wt)_L$. Since $w\leqdot wt$, we have that $w$ has a reduced
expression which is obtained from $u_1 u_2 \cdots u_\ell s_1
s_2\cdots s_k$ by deleting one letter. If the letter which is
deleted is $s_i$ for some $i$, then $t=s_k s_{k-1} \cdots s_i
s_{i+1} \cdots s_k$, which lies in $W_L$ (because the letters in a
reduced expression of an element in a standard parabolic subgroup
stay in this subgroup), a contradiction. Hence the letter which is
deleted is among the $u_i$'s. The element $w$ has therefore a
reduced expression of the form
$$u_1  \cdots \widehat{u_i} \cdots u_\ell s_1 s_2\cdots s_k,$$
hence it has $s_1 s_2 \cdots s_k=(wt)_L$ as a suffix, implying that
$(wt)_L \leq_R w$. By maximality of $w_L$ with respect to $\leq_R$,
we deduce that $(wt)_L \leq_R w_L$.
\end{proof}

From now on, let $L=I\cup J \cup K$.

\begin{lemma}
\label{lemma:M} We have
\begin{itemize}
 \item[\rm (a)] $\displaystyle \cM = \bigsqcup_{w \in W^L} w \cdot (\cM \cap W_L)$. Hence $w\in\mathcal{M}\Leftrightarrow w_L\in\mathcal{M}$.
 \item[\rm (b)] $\cM\cap W_L = \{ uv^*:u,v\in W_I,\ \ell(uv^{-1})=\ell(u)+\ell(v)\}$.
 \item[\rm (c)] If $W_I$ is finite, then
$$\cM \cap W_L = \{uv^*:u,v \in W_I,\ u \leq_R w_{0,I} v\},$$ where $w_{0,I}$ denotes the longest element in $W_I$.
\end{itemize}
\end{lemma}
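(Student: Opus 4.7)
My plan is to derive all three parts from Lemma \ref{lemma:min}, which already does most of the work. The starting observation for part (a) is that, because $I$ is disconnected from $J\cup K$, every simple reflection in $I$ commutes with every simple reflection in $J\cup K$, so $W_I\subset W^{J\cup K}=W(I,J,K)$. Combined with $W_L = W_I\times W_J\times W_K$, this shows that for $w\in W(I,J,K)$ the parabolic decomposition $w=w^L w_L$ (with $w^L\in W^L$ and $w_L\in W_L$) automatically has $w_L\in W_I$: the $W_J$ and $W_K$ parts vanish since $w\in W^{J\cup K}$. Then Lemma \ref{lemma:min}(b) yields $\mathrm{Min}(w)=w^L\cdot \mathrm{Min}(w_L)$, and since $\mathrm{Min}(w_L)\subset W_{I\cup J}\subset W_L$, the uniqueness of the decomposition $W=W^L\cdot W_L$ gives the disjoint union $\mathcal{M}=\bigsqcup_{w_1\in W^L} w_1\cdot(\mathcal{M}\cap W_L)$ and, as an immediate consequence, the equivalence $w\in\mathcal{M}\Leftrightarrow w_L\in\mathcal{M}$.

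For part (b), I intend to unpack Lemma \ref{lemma:min}(a) applied to $w_2\in W_I$: its elements of $\mathrm{Min}(w_2)$ are of the form $w_2 xx^*$ with $x\in W_I$ and $\ell(w_2x)+\ell(x^{-1})=\ell(w_2)$. Setting $u:=w_2x$ and $v:=x$, this gives $w_2xx^*=uv^*$ with $u,v\in W_I$ and the length condition becomes $\ell(u)+\ell(v)=\ell(uv^{-1})$. Conversely, given such a pair $(u,v)$, I reverse this by taking $w_2:=uv^{-1}\in W_I$ and $x:=v$. The map $(u,v)\mapsto uv^*$ is injective because $W_{I\cup J}=W_I\times W_J$ and $x\mapsto x^*$ is a bijection $W_I\to W_J$, so the two descriptions really are equal as subsets, using also that $\mathcal{M}\cap W_L=\bigsqcup_{w_2\in W_I}\mathrm{Min}(w_2)$ from part (a).

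For part (c), I will use the standard identity $\ell(w_{0,I} w)=\ell(w_{0,I})-\ell(w)$ valid for $w\in W_I$ when $W_I$ is finite. Unwrapping $u\leq_R w_{0,I}v$ as $\ell(w_{0,I}v)=\ell(w_{0,I}vu^{-1})+\ell(u)$ and applying the identity to both $w_{0,I}v$ and $w_{0,I}(vu^{-1})$ collapses the inequality to $\ell(vu^{-1})=\ell(u)+\ell(v)$. Since $\ell(vu^{-1})=\ell(uv^{-1})$, this is exactly the condition from part (b), giving the required description. I do not anticipate any genuine obstacle: the only mildly delicate bookkeeping is keeping $W_I$, $W_J$, $W_K$ straight and repeatedly invoking disjointness/disconnectedness to split lengths and parabolic decompositions cleanly.
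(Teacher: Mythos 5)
Your proposal is correct and follows essentially the same route as the paper: all three parts are deduced from Lemma \ref{lemma:min}, with (a) coming from the factorization $\mathrm{Min}(w)=w^L\cdot\mathrm{Min}(w_L)$, (b) from the substitution $u=w_2x$, $v=x$ in the length condition of Lemma \ref{lemma:min}(a), and (c) from the identity $\ell(w_{0,I}w)=\ell(w_{0,I})-\ell(w)$. You merely spell out a few details the paper leaves implicit (the disjointness of the union and the injectivity of $(u,v)\mapsto uv^*$), which is fine.
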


\begin{proof}
Part (a) immediately follows from Lemma \ref{lemma:min}(b).

By Lemma \ref{lemma:min}(a), the elements of $\cM\cap W_L$ are
exactly of the form $wxx^*$ with $w \in W_I=W(I,J,K) \cap W_L$ and
$x\in W_I$ such that $\ell(wx)+\ell(x^{-1})=\ell(w)$. Setting $u=wx$
and $v=x$, we get the description given in Part (b).

When $W_I$ is finite, the latter condition on lengths is
equivalent to $\ell(w_{0,I}w^{-1})+\ell(wx)=\ell(w_{0,I}x)$, or to
the fact that $wx \leq_R w_{0,I}x$. Setting $u=wx$ and $v=x$, we get
the asserted condition $u \leq_R w_{0,I}v$.
\end{proof}

We also have the following results.

\begin{lemma}
\label{lemma:L_parts} Let $L=I\cup J\cup K$ be as above. If
$w\in\mathcal{M}$ and $y\in W$ are such that $y_L \leq_R w_L$, then
$y\in\mathcal{M}$.
\end{lemma}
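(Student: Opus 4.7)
The plan is to reduce to the case $w, y \in W_L$ and then exploit the explicit description of $\cM \cap W_L$ from Lemma~\ref{lemma:M}(b). First, using Lemma~\ref{lemma:M}(a) in both directions, I would note that $w \in \cM$ iff $w_L \in \cM \cap W_L$, and similarly for $y$; so I may replace $w, y$ by $w_L, y_L$ and assume from the outset that $w, y \in W_L$ with $w \in \cM \cap W_L$ and $y \leq_R w$. By Lemma~\ref{lemma:M}(b), I would then fix a writing $w = uv^*$ with $u,v \in W_I$ and $\ell(uv^{-1}) = \ell(u) + \ell(v)$.

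Next, since $w \in W_{I \cup J}$ and $y \leq_R w$, a suffix-of-reduced-expression argument shows $y \in W_{I \cup J}$; because $I, J, K$ are pairwise disconnected, $W_{I \cup J} = W_I \times W_J$, and I may write $y = ab^*$ with $a, b \in W_I$. I would then show $a \leq_R u$ and $b \leq_R v$ as follows. Setting $c = wy^{-1}$, the commutation of $W_I$ and $W_J$ gives $c = (ua^{-1})(vb^{-1})^*$; additivity of length across disconnected components yields $\ell(w) = \ell(u)+\ell(v)$, $\ell(y) = \ell(a)+\ell(b)$, and $\ell(c) = \ell(ua^{-1})+\ell(vb^{-1})$. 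The identity $\ell(w) = \ell(c) + \ell(y)$, combined with the triangle inequalities $\ell(u) \leq \ell(ua^{-1})+\ell(a)$ and $\ell(v) \leq \ell(vb^{-1})+\ell(b)$, forces equality in both, so $a \leq_R u$ and $b \leq_R v$.

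The main step — which I expect to be the one requiring a genuine idea — is to upgrade these two separate suffix relations into the stronger statement $\ell(ab^{-1}) = \ell(a) + \ell(b)$. I would pick reduced expressions $u = r_1 \cdots r_m \, s_1 \cdots s_p$ with $s_1 \cdots s_p$ a reduced expression for $a$ (so $m+p = \ell(u)$, $p = \ell(a)$), and similarly $v = r'_1 \cdots r'_{m'} \, t_1 \cdots t_q$ with $t_1 \cdots t_q$ a reduced expression for $b$. Because $\ell(uv^{-1}) = \ell(u)+\ell(v)$, the concatenated word $r_1 \cdots r_m \, s_1 \cdots s_p \, t_q \cdots t_1 \, r'_{m'} \cdots r'_1$ is a reduced expression for $uv^{-1}$. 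Its contiguous substring $s_1 \cdots s_p \, t_q \cdots t_1$ has product $ab^{-1}$; being a contiguous substring of a reduced expression, it is itself reduced, hence $\ell(ab^{-1}) = p+q = \ell(a)+\ell(b)$. Applying Lemma~\ref{lemma:M}(b) yields $y \in \cM \cap W_L$, and Lemma~\ref{lemma:M}(a) concludes that the original $y$ lies in $\cM$.
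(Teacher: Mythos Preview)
Your proof is correct and follows essentially the same approach as the paper's: reduce to $W_L$ via Lemma~\ref{lemma:M}(a), decompose $w_L = uv^*$ and $y_L = ab^*$ with $a \leq_R u$, $b \leq_R v$, and then deduce $\ell(ab^{-1}) = \ell(a)+\ell(b)$. The only difference is in this last step, where the paper uses a short chain of length inequalities (writing $u = u_1 a$, $v = v_1 b$ and squeezing $\ell(u)+\ell(v) \geq \ell(u_1)+\ell(ab^{-1})+\ell(v_1) \geq \ell(uv^{-1})$) while you invoke the fact that a contiguous subword of a reduced expression is reduced; both are standard one-line arguments.
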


\begin{proof}
By Lemma \ref{lemma:M}, we have $w_L= uv^*$ with $u,v\in W_I$
such that $\ell(uv^{-1})=\ell(u)+\ell(v)$. If $y_L\leq_R w_L$, then
since $I$ and $J$ are disconnected, we have $y_L=u'v'^*$ with
$u'\leq_R u$, $v'\leq_R v$. Setting $u=u_1u'$, $v=v_1v'$, we have
\begin{eqnarray*}
\ell(w_L)=\ell(u)+\ell(v) & = & \ell(u')+\ell(u_1)+\ell(v')+\ell(v_1) \\
 & \geq & \ell(u_1)+\ell(u'v'^{-1})+\ell(v_1) \\
 & \geq & \ell(u_1u'v'^{-1}v_1^{-1})=\ell(uv^{-1})=\ell(u)+\ell(v).
 \end{eqnarray*}
We deduce that $\ell(u' v'^{-1})=\ell(u')+\ell(v')$, which shows
that $y_L\in \mathcal{M}$ (by Lemma \ref{lemma:M}(b)). Hence $y=y^L
y_L\in\mathcal{M}$ by Lemma \ref{lemma:M}(a).
\end{proof}

\begin{corollary}
Let $w\in\mathcal{M}$. Let $t\in T\setminus W_L$ such that $w\leqdot
wt$. Then $wt\in\mathcal{M}$.
\end{corollary}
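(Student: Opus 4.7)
The plan is to combine the two preceding lemmas directly: Lemma \ref{lem:cover_right} controls the behavior of the $L$-part under a cover $w \leqdot wt$ when $t \notin W_L$, and Lemma \ref{lemma:L_parts} says that membership in $\mathcal{M}$ only depends on the $L$-part via the right weak order.

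More precisely, I would first apply Lemma \ref{lem:cover_right} to the data $w \leqdot wt$ with $t \in T \setminus W_L$, which yields $(wt)_L \leq_R w_L$. Since the hypothesis $w \in \mathcal{M}$ is in force, I can then feed $y := wt$ into Lemma \ref{lemma:L_parts}, whose hypothesis $y_L \leq_R w_L$ is exactly what we just obtained. The conclusion of that lemma is $wt \in \mathcal{M}$, which is what we want.

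There is essentially no obstacle here beyond verifying that the hypothesis $t \notin W_L$ of Lemma \ref{lem:cover_right} is available (it is, by assumption) and that Lemma \ref{lemma:L_parts} applies in the direction we need (from $w$ to $y=wt$, not the reverse). So the corollary is really a one-line consequence of the two preceding lemmas, and the proof plan is simply: apply Lemma \ref{lem:cover_right} to get $(wt)_L \leq_R w_L$, then apply Lemma \ref{lemma:L_parts} to conclude $wt \in \mathcal{M}$.
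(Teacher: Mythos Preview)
Your proposal is correct and matches the paper's proof exactly: apply Lemma~\ref{lem:cover_right} to obtain $(wt)_L \leq_R w_L$, then invoke Lemma~\ref{lemma:L_parts} with $y=wt$ to conclude $wt \in \mathcal{M}$.
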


\begin{proof}
By Lemma~\ref{lem:cover_right} we have $(wt)_L\leq_R w_L$. By
Lemma~\ref{lemma:L_parts}, we get $wt\in\mathcal{M}$.
\end{proof}

\begin{lemma}\label{equiv_covers}
Let $y\in \mathcal{M}$, $w\in W(I,J,K)$ such that $y\leq w$ and
$\ell(y)+2\leq \ell(w)$. Then there is $v\in \mathcal{M}$ with
$y<v<w$.
\end{lemma}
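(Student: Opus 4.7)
The plan is to exploit the parabolic decompositions $y = y^L y_L$ and $w = w^L w_L$ with $y^L, w^L \in W^L$ and $y_L, w_L \in W_L$. By Lemma~\ref{lemma:M}(a), $y \in \mathcal{M}$ gives $y_L \in \mathcal{M} \cap W_L$; and $w \in W(I,J,K) = W^{J \cup K}$ forces $w_L \in W_I$. Since the projection $W \to W/W_L$ is order-preserving, the hypothesis $y \leq w$ yields $y^L \leq w^L$. I would split into two cases depending on whether this last inequality is an equality.

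If $y^L = w^L$, then the lifting property of Bruhat order restricted to the common coset $y^L W_L$ gives $y_L \leq w_L$ in $W_L$. As $w_L \in W_I$ and any reduced expression of $w_L$ uses only $I$-generators, a subword argument forces $y_L \in W_I$ as well. We then have $y_L, w_L \in W_I$ with $\ell(w_L) - \ell(y_L) = \ell(w) - \ell(y) \geq 2$, and the chain property of Bruhat order on the parabolic subgroup $W_I$ produces $v_L \in W_I$ strictly between them. Setting $v := y^L v_L$, Lemma~\ref{lemma:M}(b) shows $v_L \in W_I \subseteq \mathcal{M} \cap W_L$, so Lemma~\ref{lemma:M}(a) gives $v \in \mathcal{M}$, and $y < v < w$ is immediate.

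If instead $y^L < w^L$, my aim is to produce a cover $y \lessdot y_1 \leq w$ with $y^{-1} y_1 \notin W_L$; the corollary following Lemma~\ref{lemma:L_parts} will then force $y_1 \in \mathcal{M}$, and $\ell(y_1) = \ell(y)+1 \leq \ell(w)-1$ yields $y_1 < w$, so $v := y_1$ works. To construct $y_1$, I would use the chain property of Bruhat order on the parabolic quotient $W^L$ (which is graded under the induced order) to obtain $z \in W^L$ with $y^L \lessdot z \leq w^L$; necessarily $z = y^L t$ for a reflection $t \notin W_L$, since $z \in W^L \setminus \{y^L\}$ cannot lie in $y^L W_L$. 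The candidate $y_1 := z y_L = y^L t y_L$ has length $\ell(z) + \ell(y_L) = \ell(y)+1$ and hence covers $y$, while $y^{-1} y_1 = y_L^{-1} t y_L$ is a reflection outside $W_L$ because conjugation by $y_L \in W_L$ preserves $W_L$. The main obstacle is the verification $y_1 \leq w$: the choice of $z$ must be made compatible with a reduced expression of $w$ that admits a subword realizing $y$ and preserving the factor $y_L$. I would argue this by combining $y \leq w$ and $z \leq w^L$ via Lemma~\ref{lem_product} together with iterated applications of the lifting property of Bruhat order, selecting $z$ so that the $W_L$-factor $y_L$ is retained when lifting $z \leq w^L$ up to $z y_L \leq w$.
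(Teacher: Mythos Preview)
Your Case 1 ($y^L = w^L$) is clean and correct: the projection to $W^L$ is order-preserving, so a chain from $y$ to $w$ stays in the coset $y^L W_L$, and left multiplication by $y^L$ is a poset isomorphism $W_L \to y^L W_L$; then everything reduces to Bruhat order on $W_I$, which is graded.

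The gap is in Case 2. You correctly identify the obstacle---showing $z y_L \leq w$---but the tools you invoke do not close it. Lemma~\ref{lem_product} applied to $z \leq w^L$ only produces some $w_L' \leq w_L$ with $z w_L' \leq w$, and there is no reason $w_L'$ should equal the fixed element $y_L$. Nor does the lifting property select $z$ for you: in $\mathfrak{S}_4$ with $I=\{s_1\}$, $J=\{s_3\}$, $K=\emptyset$, take $y = s_2 s_1 \in \mathcal{M}$ and $w = s_2 s_1 s_3 s_2 \in W(I,J,K)$. Here $y^L = s_2$, $y_L = s_1$, $w^L = w$, $w_L = e$. The covers of $y^L$ in $W^L$ below $w^L$ are $z = s_1 s_2$ and $z = s_3 s_2$; for the second choice, $z y_L = s_3 s_2 s_1 \not\leq w$. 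So some choices of $z$ fail, and your argument gives no mechanism for finding one that works. What you actually need is: given $y \leq w$ with $y^L < w^L$, there exists a reflection $t \notin W_L$ with $y \lessdot yt \leq w$. This is true but not elementary.

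The paper proves exactly this statement (its Claim 2), and the argument is substantially harder than what you sketch. The paper works from the top: it first passes to some $wt \lessdot w$ in $W^K$ with $y \leq wt$, and if $wt \notin \mathcal{M}$ it decomposes $wt = w_1 w_2 w_3^*$ and replaces $y$ by $w_1 \hat w_2 \hat w_3^* \in \mathcal{M}$. Only in the residual case $y = w_1 \hat w_2 \hat w_3^*$ does it need the reflection $t' \notin W_L$ with $y \lessdot y t' \leq w$, and there the key input is Dyer's theorem that any Bruhat interval of length two sits inside a dihedral reflection subgroup, together with the fact that a maximal dihedral reflection subgroup containing two reflections of $W_L$ is entirely contained in $W_L$. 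This is what forces the ``new'' reflection at each step of the chain to lie outside $W_L$. Your approach would need an argument of comparable strength at the point you left open.
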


\begin{proof}
By~\cite[Corollary 3.8]{Deodhar}, the poset $(W^K, \leq)$ is graded
and the rank function is given by the restriction of $\ell$ to
$W^K$. Since $\mathcal{M}\subset W^K$, there is $t\in T$ such that
$y< wt \leqdot w$ and $wt\in W^K$.

Since $w\in W(I,J,K)$, we can write $w=w^Lw_L$ with $w^L\in W^L$ and
$w_L\in W_I$, and $\ell(w)=\ell(w^L)+\ell(w_L)$, hence a reduced
expression of $w$ can be obtained by concatenating reduced
expressions of $w^L$ and $w_L$. Since $wt\leqdot w$, we can get a
reduced expression of $wt$ by deleting either a letter in $w^L$ or a
letter in $w_L$. In the latter case, we have $wt\in
W^LW_I=W(I,J,K)$, hence $wt\in\cM$, and the lemma is obtained with
$v=wt$. It remains to consider the former situation. In such a case,
we have $wt=(w^L t_0)w_L$, where $w^L t_0\leqdot w^L$. Since $w^L\in
W^L$, we must have $t_0\in T\setminus W_L$, and therefore
\begin{equation}
\label{t-notin-WL} t=w_L^{-1} t_0w_L\notin W_L.
\end{equation}

Let $y=y_1 y_2 y_3^*$ be the unique decomposition of $y$ with
$y_1\in W^L$, $y_2,y_3\in W_I$; this follows from Lemma
\ref{lemma:M} (since $y\in\cM$) or more generally from the fact that
$y\in W^K$. Since $wt\in W^K$, we can decompose $wt=w_1 w_2 w_3^*$
in the same way. Since $y\leq wt$, there are $\hat{w}_1$,
$\hat{w}_2$, $\hat{w}_3$ such that $y=\hat{w}_1 \hat{w}_2
\hat{w}_3^*$, $\hat{w}_i\leq w_i$ and
$\ell(y)=\ell(\hat{w}_1)+\ell(\hat{w}_2)+\ell(\hat{w}_3)$: just take
a reduced expression of $wt$ which is obtained by concatenating
reduced expressions of $w_1$, $w_2$ and $w_3^*$; it then has a
subword which is a reduced expression of $y$. Note that since
$\hat{w}_2 \hat{w}_3^*\in W_{I\cup J}\subset W_L$, we have that
$\ell(w_1 \hat{w}_2
\hat{w}_3^*)=\ell(w_1)+\ell(\hat{w}_2)+\ell(\hat{w}_3)$. Hence
$y\leq w_1 \hat{w}_2 \hat{w}_3^* \leq wt$.

\smallskip
\noindent {\it Claim 1:} $w_1 \hat{w}_2 \hat{w}_3^*\in \mathcal{M}$.

To see this, note that since
$\ell(y)=\ell(\hat{w}_1)+\ell(\hat{w}_2\hat{w}_3^*)$, we have
$\hat{w}_2\hat{w}_3^*\leq_R y$, and so $\hat{w}_2 \hat{w}_3^*\leq_R
y_L=y_2y_3^*$ (since $\hat{w}_2 \hat{w}_3^*\in W_L$). Thus
$y_2=u_2\hat{w}_2$, $y_3= u_3\hat{w}_3$ for some $u_2,u_3\in W_I$
such that
$\ell(y_i)=\ell(u_i)+\ell(\hat{w}_i)$ (using that $I$ and $J$ are disconnected). But
since $y\in\mathcal{M}$, by Lemma \ref{lemma:M} we have $\ell(y_2 y_3^{-1})=\ell(y_2)+\ell(y_3)$ and we get
\begin{eqnarray*}
\ell(u_2)+\ell(\hat{w}_2\hat{w}_3^{-1})+\ell(u_3) & \geq &
\ell(u_2\hat{w}_2 \hat{w}_3^{-1} u_3^{-1})=\ell(y_2 y_3^{-1})=\ell(y_2)+\ell(y_3) \\
 & = & \ell(u_2)+\ell(\hat{w}_2)+\ell(\hat{w}_3)+\ell(u_3) \\
 & \geq & \ell(u_2)+\ell(\hat{w}_2\hat{w}_3^{-1})+\ell(u_3),
\end{eqnarray*}
hence $\ell(\hat{w}_2
\hat{w}_3^{-1})=\ell(\hat{w}_2)+\ell(\hat{w}_3)$, implying Claim 1
(see Lemma \ref{lemma:M}).

\smallskip

Since $y\leq w_1 \hat{w}_2 \hat{w}_3^* < w$, if $y\neq w_1 \hat{w}_2
\hat{w}_3^*$, then we get the conclusion of the lemma with $v=w_1
\hat{w}_2 \hat{w}_3^*$. Hence we can assume that $y=w_1 \hat{w}_2
\hat{w}_3^*$.

\smallskip
\noindent {\it Claim 2:} We can find $t'\in T$, $t'\notin W_L$, such
that $y\leqdot yt' < w$.

Since $y$ has a reduced expression obtained from $w_1 w_2 w_3^*=wt$
by only deleting letters in $w_2$ and $w_3^*$, it follows that $wt=
y t_1 t_2\cdots t_i$ with $t_j\in W_{I\cup J}\cap T\subset W_L$ for
all $j=1, \dots, i$ and $y t_1 t_2 \cdots t_j \leqdot y t_1 t_2
\cdots t_{j+1}$ for all $j=1, \dots, i-1$. Hence we have $$y t_1
t_2\cdots t_{i-1} \leqdot y t_1 t_2\cdots t_i=wt \leqdot w.$$
Setting $u= y t_1 t_2\cdots t_{i-1}$, we have $u \leqdot ut_i= wt
\leqdot w=ut_i t$.
By a property of Bruhat intervals (see~\cite[Proposition 2.1 and its
proof]{Dyer}), the Bruhat interval $[u,w]$ is isomorphic (as a
poset) to a Bruhat interval in a dihedral reflection subgroup of
$W$. Hence there is exactly one element $u'\in W$ such that $u
\leqdot u'\leqdot w$, $u'\neq wt$. Let $t_i',q\in T$ be such that
$u'=ut_i'$ and $u' q=w$.

\smallskip
\noindent {\it Subclaim:} $t_i'\notin W_L$.

We have $t_i'q= t_i t\neq 1$ and, by~\cite[Lemma 3.1]{Dyer}, the
reflection subgroup $W':=\langle t_i', q, t_i, t\rangle$ is
dihedral. To show the Subclaim, arguing by contradiction, assume
that $t_i'\in W_L$. We show that this implies that $W'\subset W_L$,
contradicting (\ref{t-notin-WL}).
Note that if $W$ is of type $A$, this is clear as $W'$ has to be either of type $A_1\times A_1$ or of
type $A_2$, and is therefore generated by any two distinct
reflections, whence $W'=\langle t_i, t_i'\rangle \subset W_L$. In
general the result can be proven as follows:
by~\cite[Remark 3.2]{Dyer}, the dihedral reflection subgroup
$\langle t_i', t_i\rangle$ is included in a unique maximal dihedral
reflection subgroup $W''$, defined by
$$
W''=\langle r_{\alpha}~|~\alpha\in (\mathbb{R} \alpha_{t_i'}
+\mathbb{R} \alpha_{t_i})\cap \Phi^+\rangle,
$$
where $\Phi$ denotes a generalized root system for $(W,S)$ and
$\alpha_{t_i}, \alpha_{t_i'}$ are the roots attached to $t_i$ and
$t_i'$. Since $t_i', t_i\in W_L$ which is standard parabolic, it
follows that for every root $\alpha\in (\mathbb{R} \alpha_{t_i'}
+\mathbb{R} \alpha_{t_i})\cap \Phi^+$ we have $r_\alpha\in W_L$,
hence that $W''\subset W_L$. Since $W''$ is the unique maximal
dihedral reflection subgroup containing both $t_i'$ and $t_i$, it
follows that $W'\subset W''\subset W_L$.
The proof of the Subclaim is then complete.

\smallskip

Arguing the same with $y t_1 t_2\cdots t_{i-2} \leqdot yt_1
t_2\cdots t_{i-2} t_{i-1} \leqdot y t_1 t_2\cdots t_{i-2} t_{i-1}
t_i'=u'$, we find $t_{i-1}'\notin W_L$ with $y t_1 t_2\cdots t_{i-2}
\leqdot yt_1 t_2\cdots t_{i-2} t_{i-1}' \leqdot y t_1 t_2\cdots
t_{i-2} t_{i-1} t_i'=u'$. Going on, we find reflections $t_1', t_2',
\cdots, t_i'\notin W_L$ such that $$y \leqdot y t_1' \leqdot y t_1
t_2' \leqdot \dots \leqdot y t_1 t_2\cdots t_{i-1} t_i'=u'\leqdot
w.$$ Hence, taking $t'=t_1'$, we get Claim 2.

\smallskip

Now it suffices to show that $y t'\in \mathcal{M}$. To see this, as
$t'\notin W_L$, by Lemma~\ref{lem:cover_right}, we get that $(y
t')_L \leq_R y_L$. By Lemma~\ref{lemma:L_parts}, it implies that $y
t'\in\mathcal{M}$, as required.
\end{proof}

\subsection{Cover relations for $\leq_{\mathcal{O}}$}
\label{section-cover-relations}

Given $w, w'\in W(I,J,K)$, we write $w'\leqdot_{\mathcal{O}} w$ if
$w$ covers $w'$ in the partial order $\leq_{\mathcal{O}}$. For $u,
u'\in W$, recall that we write $u'\leqdot u$ if $u$ covers $u'$ in
the (strong) Bruhat order on $W$. We now characterize the cover
relations in $\leq_{\mathcal{O}}$ in terms of elements of minimal
length in cosets:

\begin{theorem}\label{covers}
Let $w, w'\in W(I,J,K)$. The following are equivalent:
\begin{itemize}
\item[\rm (i)] We have $w'\leqdot_{\mathcal{O}} w$;
\item[\rm (ii)] There are $u\in \mathrm{Min}(w)$ and $u'\in \mathrm{Min}(w')$ such that $u'\leqdot u$;
\item[\rm (iii)] There is $u'\in \mathrm{Min}(w')$ such that $u' \leqdot w$;
\item[\rm (iv)] For all $u \in \mathrm{Min}(w)$, there is $u' \in \mathrm{Min}(w')$ such that $u' \leqdot u$.
\end{itemize}
\end{theorem}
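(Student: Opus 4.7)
The plan is to prove the cycle of implications (iv) $\Rightarrow$ (iii) $\Rightarrow$ (ii) $\Rightarrow$ (i) $\Rightarrow$ (iii) $\Rightarrow$ (iv). The two implications (iv) $\Rightarrow$ (iii) and (iii) $\Rightarrow$ (ii) are immediate by choosing $u = w$, which lies in $\mathrm{Min}(w)$ as observed at the start of Section \ref{section-minimal-length}.

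For (ii) $\Rightarrow$ (i), given $u \in \mathrm{Min}(w)$ and $u' \in \mathrm{Min}(w')$ with $u' \leqdot u$, the definition of $\leq_{\mathcal{O}}$ yields $w' <_{\mathcal{O}} w$ (strictly, since $\ell(w') = \ell(u') < \ell(u) = \ell(w)$). To upgrade this to a cover relation, I would argue by contradiction: if some $w'' \in W(I,J,K)$ satisfied $w' <_{\mathcal{O}} w'' <_{\mathcal{O}} w$, Corollary \ref{coro:no_matter_which_minimal_above} applied to the fixed $u$ would yield $v_0 \in [w'']$ with $v_0 < u$, and Lemma \ref{minimal_below} would then produce $v \in \mathrm{Min}(w'')$ with $v \leq v_0 < u$; the same corollary applied to $w' <_{\mathcal{O}} w''$ at $v$ would produce $v' \in [w']$ with $v' < v$, and the length chain $\ell(u) - 1 = \ell(u') \leq \ell(v') < \ell(v) < \ell(u)$ would be a contradiction.

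For (i) $\Rightarrow$ (iii), taking $u = w$ in Corollary \ref{coro:no_matter_which_minimal_above} combined with Lemma \ref{minimal_below} produces $u' \in \mathrm{Min}(w')$ with $u' < w$. If $u' \not\leqdot w$, then $\ell(u') + 2 \leq \ell(w)$ and Lemma \ref{equiv_covers} provides $v \in \mathcal{M}$ with $u' < v < w$; writing $v \in \mathrm{Min}(w'')$ for the associated $w'' \in W(I,J,K)$, Corollary \ref{coro:no_matter_which_minimal_above} gives $w' <_{\mathcal{O}} w'' <_{\mathcal{O}} w$ (both inequalities being strict by length comparison), contradicting the cover hypothesis. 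Hence $u' \leqdot w$.

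Finally, for (iii) $\Rightarrow$ (iv), Lemma \ref{lem_min} writes any $u \in \mathrm{Min}(w)$ as $u = w x_1 \cdots x_k$ with $x_j \in \Theta$ and each partial product still in $\mathrm{Min}(w)$. Starting from the cover $u'_0 \leqdot w$ provided by (iii), I iterate Lemma \ref{lemm:couverture}: at step $i$, applied to the current cover $u'_i \leqdot wx_1 \cdots x_i$ with $s = s_{i+1}$, it produces $y \in \mathrm{Min}(w')$ with $y < wx_1 \cdots x_{i+1}$ and $\ell(y) \leq \ell(u'_i)$, and the fact that $y$ has minimal length in $[w']$ forces $\ell(y) = \ell(wx_1 \cdots x_{i+1}) - 1$, whence $y \leqdot wx_1 \cdots x_{i+1}$. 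After $k$ iterations one obtains the required cover at $u$. The most delicate step is (i) $\Rightarrow$ (iii), where Lemma \ref{equiv_covers} only guarantees an element of $\mathcal{M}$, which must then be combined with the fact that every element of $\mathcal{M}$ is minimal in its own coset to produce a genuine intermediate class $w''$ in $W(I,J,K)$ violating the cover assumption.
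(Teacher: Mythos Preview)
Your proof is correct and follows the same cycle (iv) $\Rightarrow$ (iii) $\Rightarrow$ (ii) $\Rightarrow$ (i) $\Rightarrow$ (iii) $\Rightarrow$ (iv) as the paper, with the key step (i) $\Rightarrow$ (iii) handled identically via Lemma~\ref{equiv_covers}. The only notable difference is in (iii) $\Rightarrow$ (iv): you iterate Lemma~\ref{lemm:couverture} along a $\Theta$-chain from Lemma~\ref{lem_min}, whereas the paper simply observes that (iii) forces $\ell(w')=\ell(w)-1$, and then for any $u\in\mathrm{Min}(w)$ combines Corollary~\ref{coro:no_matter_which_minimal_above}(iii) with Lemma~\ref{minimal_below} to get $u''\in\mathrm{Min}(w')$ with $u''<u$, which the length equality immediately upgrades to $u''\leqdot u$ --- this is shorter and avoids re-running the machinery behind the corollary.
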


\begin{proof}
It is clear that (iv) $\Rightarrow$ (iii) $\Rightarrow$ (ii). The
fact that (ii) $\Rightarrow$ (i) follows from
Corollary~\ref{coro:no_matter_which_minimal_above}, noting that
$\ell(w')=\ell(u')=\ell(u)-1=\ell(w)-1$, which forces
$w'\leqdot_{\mathcal{O}} w$. A similar argument shows (iii)
$\Rightarrow$ (iv).

It remains to show that (i) $\Rightarrow$ (iii). If
$w'\leqdot_\mathcal{O} w$, then by Lemma~\ref{minimal_below} there is $y\in\mathrm{Min}(w')$ such that $y\leq w$. We claim
that $w$ covers $y$ in Bruhat order, so that (iii) holds. Otherwise,
we have $\ell(y)+2\leq \ell(w)$, and Lemma~\ref{equiv_covers} yields
$v\in \cM$, say $v\in\mathrm{Min}(w'')$, such that $y< v<w$. Then by
definition of $\leq_\mathcal{O}$ we have $w''<_\mathcal{O} w$, and
by Corollary~\ref{coro:no_matter_which_minimal_above} we get $w'
<_\mathcal{O} w''$: together it contradicts $w'\leqdot_\mathcal{O}
w$.
\end{proof}

\begin{corollary}
\label{C-graded} The poset $(W(I,J,K), \leq_{\mathcal{O}})$ is
graded by the restriction of the length function to $W(I,J,K)$.
\end{corollary}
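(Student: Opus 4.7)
The plan is to verify that the restriction $\ell|_{W(I,J,K)}$ serves as a rank function for the poset $(W(I,J,K), \leq_{\mathcal{O}})$, which amounts to establishing two facts: every cover relation in $\leq_{\mathcal{O}}$ strictly drops $\ell$ by exactly one, and every strict relation with length gap at least two admits a strict intermediate element. Most of the work has already been done in Theorem \ref{covers} and Lemma \ref{equiv_covers}, so the corollary should follow with only a short argument tying these together.

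First I would use Theorem \ref{covers} to settle the cover statement. Suppose $w'\leqdot_{\mathcal{O}} w$; by the equivalence (i) $\Leftrightarrow$ (ii) in Theorem \ref{covers}, there exist $u\in\mathrm{Min}(w)$ and $u'\in\mathrm{Min}(w')$ with $u'\leqdot u$ in the strong Bruhat order. Since the latter drops length by exactly one and since $\ell(u)=\ell(w)$, $\ell(u')=\ell(w')$ by definition of $\mathrm{Min}(\cdot)$, we conclude $\ell(w')=\ell(w)-1$.

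Next, for the refinement step, suppose $w'<_{\mathcal{O}} w$ with $\ell(w)-\ell(w')\geq 2$. By definition of $\leq_{\mathcal{O}}$ there exists $u\in [w']$ with $u\leq w$, and Lemma \ref{minimal_below} then furnishes $y\in\mathrm{Min}(w')$ with $y\leq u\leq w$; in particular $y\leq w$ and $\ell(y)=\ell(w')$, so $\ell(y)+2\leq\ell(w)$. Applying Lemma \ref{equiv_covers} produces $v\in\mathcal{M}$ with $y<v<w$; let $w''\in W(I,J,K)$ be the unique element with $v\in\mathrm{Min}(w'')$, so that $\ell(w'')=\ell(v)$. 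Then $w'<_{\mathcal{O}} w''$ (witnessed by $y<v$ together with $y\in[w']$ and $v\in[w'']$) and $w''<_{\mathcal{O}} w$ (witnessed by $v<w$), and the strict inequalities $\ell(w')<\ell(w'')<\ell(w)$ ensure properness of the intermediate element.

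Combining these two facts, any saturated chain in the interval $[w',w]\subset W(I,J,K)$ has length exactly $\ell(w)-\ell(w')$, so $\ell|_{W(I,J,K)}$ is a rank function and $(W(I,J,K),\leq_{\mathcal{O}})$ is graded. The main obstacle was proving the refinement step, which has been dealt with in Lemma \ref{equiv_covers}; here no substantially new difficulty arises, and the corollary is essentially a bookkeeping consequence of the preceding results.
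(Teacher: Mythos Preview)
Your argument is correct, and your first paragraph is exactly the paper's proof: Theorem~\ref{covers} gives $u'\leqdot u$ with $u\in\mathrm{Min}(w)$, $u'\in\mathrm{Min}(w')$, hence $\ell(w')=\ell(w)-1$, and the paper stops there.

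Your second ``refinement'' step is correct but superfluous. Once you know that every cover in $\leq_{\mathcal{O}}$ drops length by exactly one, the existence of an intermediate element whenever $\ell(w)-\ell(w')\geq 2$ is automatic: if no $w''$ with $w'<_{\mathcal{O}}w''<_{\mathcal{O}}w$ existed, then $w'\leqdot_{\mathcal{O}}w$ by definition of cover, forcing $\ell(w)-\ell(w')=1$, a contradiction. (Intervals here are finite because $\ell$ is bounded on them, so saturated chains always exist.) In effect your refinement paragraph re-derives, via Lemma~\ref{equiv_covers}, part of what Theorem~\ref{covers} already packaged; the paper simply observes that the cover statement alone suffices for gradedness.
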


\begin{proof}
In view of Theorem~\ref{covers}, we have that if $w, w'\in W(I,J,K)$
satisfy $w'\leqdot_{\mathcal{O}} w$, then $\ell(w')=\ell(w)-1$. This
concludes the proof.
\end{proof}

\section{Inclusions of orbit closures in type $A$}

\label{type_a}

The purpose of this section is to illustrate the combinatorial
results obtained in Section \ref{cox} in a situation that is related
to the framework considered in Parts \ref{part1}--\ref{part2}.

In this section, let $e\in \gl_n(\K)$ be a $2$-nilpotent matrix of
rank $r\leq \frac{n}{2}$, say
\begin{equation}
\label{e} e=\begin{pmatrix} 0 & 0 & 1_r \\ 0 & 0 & 0 \\ 0 & 0 & 0
\end{pmatrix}
\end{equation}
so that the stabilizer $Z:=Z_G(e)$, with $G=\mathrm{GL}_n(\K)$, is
given by
\begin{equation}
\label{Z-typeA} Z=\left\{\begin{pmatrix}
a & * & * \\ 0 & b & * \\
0 & 0 & a
\end{pmatrix}: a\in\mathrm{GL}_r(\K),\ b\in\mathrm{GL}_{n-2r}(\K)\right\}.
\end{equation}
In the notation of Section~\ref{cox}, let
$W(I,J,K)\subset\mathfrak{S}_n$, with $I=\{s_1, \dots, s_{r-1}\}$,
$J=\{s_{n-r+1}, \dots, s_{n-1}\}$, $K=\{s_{r+1}, \dots,
s_{n-r-1}\}$. The isomorphism $W_I\to W_J$, $x\mapsto x^*$ is given
by $s_i^*=s_{n-r+i}$. Hence
\begin{eqnarray*}
W(I,J,K) & = & W^{J\cup K} \\  & = &
\{w\in\mathfrak{S}_n:w(r+1)<\ldots<w(n-r),\ w(n-r+1)<\ldots<w(n)\}.
\end{eqnarray*}
We also consider the partial order $\leq_\mathcal{O}$ on $W(I,J,K)$
(see Proposition \ref{P-order_abstract}). Let
$B\subset\mathrm{GL}_n(\K)$ be the Borel subgroup of invertible
upper-triangular matrices.

\begin{theorem}
\label{T9.1} With the above notation, there is a one-to-one
correspondence $w\mapsto\mathbb{O}_w$ (resp.
$w\mapsto\mathcal{O}_w$) between the set $W(I,J,K)$ and the set of
$Z$-orbits on $G/B$ (resp. the set of $B$-orbits on $G\cdot e$).
Moreover,
\[
\mathbb{O}_{w'}\subset\overline{\mathbb{O}_{w}}\quad\mbox{(resp.
$\mathcal{O}_{w'}\subset\overline{\mathcal{O}_{w}}$)}\quad\Leftrightarrow\quad
w'\leq_\mathcal{O}w.
\]
In particular, the cover relations for the inclusions of orbit
closures are described in Theorem \ref{covers}.
\end{theorem}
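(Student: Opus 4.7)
Part (a) follows from combining Example \ref{E7.3} with a coset identification. Example \ref{E7.3} (which is Theorem \ref{T4.1} applied to our setting) parametrizes the $Z$-orbits on $G/B$ by $\lW{P}\times\mathfrak{S}_r$, with the orbit for $(y,v)$ being $\mathcal{H}_{y,v}=Z\xi_v yB/B$, where $\xi_v$ is $v$ embedded in $W_I\subset W=\mathfrak{S}_n$. The key combinatorial identity is that the subgroup $W_KW_{I,J}\subset\mathfrak{S}_n$ coincides with $\Delta\mathfrak{S}_r\times\mathfrak{S}_{n-2r}$, which in turn equals $N_{L_Z}(T)/T$ by the description of $L_Z$ in Example \ref{E4.5}. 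Consequently, $W(I,J,K)$ and $\lW{P}\times\mathfrak{S}_r$ have the same cardinality $n!/(r!(n-2r)!)$. The explicit bijection $w\mapsto\mathbb{O}_w:=ZwB/B$ (viewing $w\in W(I,J,K)$ as a permutation matrix) is then well defined and bijective: surjectivity uses Corollary \ref{CI.5.1} (each orbit contains a $T$-fixed flag $uB$), and injectivity reduces to the computation that $ZuB\cap N_G(T)/T$ is a single $W_KW_{I,J}$-coset of $u$ (using $N_G(T)\cap B=T$ and $N_Z(T)/T=W_KW_{I,J}$). The analogous bijection for $B$-orbits on $G\cdot e$ follows by transport through the $G$-equivariant isomorphism $G/Z\cong G\cdot e$, $gZ\mapsto g\cdot e$.

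For part (b), I first establish the direction $(\Leftarrow)$. Suppose $w'\leq_\mathcal{O}w$, so there exists $u\in[w']$ with $u\leq w$ in the Bruhat order on $\mathfrak{S}_n$. The geometric input needed is that $\mathbb{O}_w$ decomposes as a disjoint union of Schubert cells indexed by the coset $[w]$, i.e., $\mathbb{O}_w=\bigsqcup_{u''\in[w]}Bu''B/B$. This will follow from Theorem \ref{T4.1} combined with a Bialynicki--Birula argument: the $T$-fixed points of $\mathbb{O}_w$ are exactly $\{u''B:u''\in[w]\}$, and each such fixed point is the $T$-attracting limit of its Schubert cell for a suitable generic cocharacter of $T$, forcing the corresponding Schubert cell into $\mathbb{O}_w$. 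Taking closures and using $u\leq w\in[w]$, we deduce $uB\in\overline{BwB/B}\subset\overline{\mathbb{O}_w}$. Since $\mathbb{O}_u=\mathbb{O}_{w'}$ by part (a) and $\overline{\mathbb{O}_w}$ is $Z$-stable, this yields $\mathbb{O}_{w'}\subset\overline{\mathbb{O}_w}$.

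For the direction $(\Rightarrow)$, I plan to leverage the combinatorial description of $B$-orbit closures on $G\cdot e$ due to \cite{BP} and \cite{BR}, who parametrize these orbits by link patterns with $r$ arcs on $n$ points and describe the Bruhat order by explicit combinatorial conditions (in particular characterizing the cover relations). The plan is to exhibit an explicit dictionary between their link patterns and the cosets in $W(I,J,K)$ --- roughly, reading a permutation $w\in W^{J\cup K}$ and recording the pairs of positions drawn from the two blocks of size $r$ as arcs --- and to verify that the cover relations of \cite{BR} translate into exactly the cover relations of $\leq_\mathcal{O}$ classified in Theorem \ref{covers}. Since both posets are graded by the restriction of $\ell$ (Corollary \ref{C-graded}), matching cover relations implies matching orders, completing the reverse implication. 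The main obstacle is setting up this dictionary carefully and checking the cover relations case by case; in particular, the generators $\Theta=\{ss^*:s\in I\}$ of $W_{I,J}$ must be identified with a specific combinatorial move on link patterns (plausibly a swap of two arc endpoints within a common block), and the minimal length representatives of $[w]$ studied in Lemma \ref{lemma:min} should play a central role in producing the correct match.
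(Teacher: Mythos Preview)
Your argument for $(\Leftarrow)$ rests on the claim that $\mathbb{O}_w=\bigsqcup_{u''\in[w]}Bu''B/B$, and this is false. Take $n=4$, $r=2$, and $w=1$: by Theorem~\ref{T4.1}(c) one has $\dim\mathbb{O}_1=\ell(1)+d(\xi_1)=0+1=1$ (the diagonal $\mathrm{GL}_2$-orbit of the base point in $\mathcal{B}_2\times\mathcal{B}_2$ is one-dimensional), whereas $[1]=\{1,s_1s_3\}$ and the Schubert cell $Bs_1s_3B/B$ has dimension $2$. More conceptually, $Z$ does not contain $B$ (only $(L_Z\cap B)\ltimes U$, which is strictly smaller than $B$ when $r\geq 2$), so $Z$-orbits have no reason to be unions of $B$-orbits. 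It is true that the $T$-fixed points of $\mathbb{O}_w$ are exactly $\{u''B:u''\in[w]\}$, but the Bialynicki--Birula attracting cells for a generic cocharacter are cells \emph{of the flag variety}, not cells of the locally closed subvariety $\mathbb{O}_w$; nothing forces them into $\mathbb{O}_w$. Consequently, from $u\leq w$ you only get $uB\in\overline{BwB/B}$, which is $B$-stable rather than $Z$-stable and is not contained in $\overline{\mathbb{O}_w}$ in general. The dual attempt on the $B$-orbit side fails for the same reason: $u\leq w$ gives $u\in\overline{BwB}$, hence $u\cdot e\in\overline{BwB}\cdot e$, but $BwB\cdot e=Bw\cdot(B\cdot e)$ is in general strictly larger than $Bw\cdot e$.

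The paper circumvents this entirely: both implications are established combinatorially through the Boos--Reineke rank inequalities (Theorem~\ref{BR}, reformulated as Lemma~\ref{comb}). For $(\Leftarrow)$, given $u\in[w']$ with $u\leq w$, the tableau criterion for Bruhat order on $\mathfrak{S}_n$ is translated directly into the inequalities $|\{k\in S_{w'}^i:k>j\}|\leq|\{k\in S_w^i:k>j\}|$. For $(\Rightarrow)$, each relation in the list of \cite[Theorem~4.6]{BR} (which contains all $\leqD$-covers, though also some non-covers) is shown to yield an explicit $u\in[w']$ with $u\leq w$. Your plan for $(\Rightarrow)$ is close to this last step, but note that you cannot literally ``match cover to cover'': the list in \cite{BR} is only a superset of the covers, so the correct formulation is to check that each listed relation yields a $\leq_\mathcal{O}$ relation. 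That already suffices for $\leqD\Rightarrow\leq_\mathcal{O}$, and you would still need an independent argument (such as the paper's tableau computation) for the other implication.
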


The orbit $G\cdot e\subset \gl_n(\K)$ is the set of $2$-nilpotent
matrices of rank $r$. The topology of the $B$-orbits on the set of
$2$-nilpotent matrices has been studied in \cite{BP,BR}. In
particular the parametrization of orbits and the characterization of
the inclusion relations between orbit closures given in Theorem
\ref{T9.1} is essentially given in \cite[Lemma 7.3.1]{BP}; however,
we have not understood all the arguments given in \cite{BP}: see the
comment after Lemma \ref{BP}. For the sake of completeness, we give
a proof of Theorem \ref{T9.1}, which is mainly based on \cite{BR}.
We prove Theorem \ref{T9.1} in two steps: in Section
\ref{section-9.1}, we first define the bijective map
$w\mapsto\mathcal{O}_w$. In Section \ref{section-9.2}, we prove the
assertion on inclusion of orbit closures. To do this, we first
recall Boos--Reineke's criterion for inclusion of orbit closures,
and we then show the equivalence between this criterion and the
inequality $w' \leq_{\mathcal{O}} w$ (Proposition \ref{prop_equiv}),
by using the version of Boos--Reineke's criterion stated in Lemma
\ref{comb}. We only prove the result claimed for the $B$-orbits on
$G\cdot e$. The analogous claim for the $Z$-orbits on $G/B$ ensues,
due to the correspondence between these two orbit sets; see also
Remark \ref{Remark-ZvsB}.

\medskip

We point out that the characterization of the cover relations
obtained in Theorem \ref{T9.1} appears to be new. In \cite[Theorem
4.6]{BR}, the authors give a list of elementary relations which
include all the cover relations, but the obtained characterization is only necessary and not sufficient.

\subsection{Parametrization of orbits by oriented link patterns}\label{section-9.1}

In~\cite{BR}, the $B$-orbits on $2$-nilpotent matrices in
$\gl_n(\K)$ are parametrized by oriented link patterns. Let
$\mathcal{D}_{n}$ be the set of \textit{oriented link patterns} on
$\{1,2,\dots,n\}$, that is, oriented graphs on the set $\{1,2,\dots,
n\}$ such that every vertex is incident with at most one arrow. Let
$\mathcal{D}_{n,r}\subset\mathcal{D}_n$ denote the subset of
oriented link patterns with $r$ arrows.

 Hereafter,
$\{\varepsilon_1,\ldots,\varepsilon_n\}$ denotes the standard basis
of $\mathbb{K}^n$. Given $d\in\mathcal{D}_n$, a representative of the corresponding
$B$-orbit $\mathcal{O}_d$ is given by the matrix $M_d\in \gl_n(\K)$
defined by $M_d(\varepsilon_i)=\varepsilon_j$ if there is an arrow
from $i$ to $j$ and $M_d(\varepsilon_i)=0$ if there is no arrow
starting from $i$.

For $k\in\{1,\ldots,n\}$, we write $p_k^d$ for the number of
vertices to the left of $k$ (i.e., $\leq k$) which are not incident
with an arrow, plus the number of arrows whose target vertex lies to
the left of $k$. For $k\in\{0,1,\ldots,n\}$ and
$\ell\in\{1,\ldots,n\}$, we write $q_{k,\ell}^d$ for $p_{\ell}^d$
plus the number of arrows whose source vertex lies to the left of
$\ell$ and target vertex lies to the left of $k$. Note that
$q_{0,\ell}^d=p_\ell^d$. Then Boos and Reineke show:

\begin{theorem}[{\cite[Theorem 4.3]{BR}}]\label{BR}
The set of $2$-nilpotent matrices is the disjoint union of the
orbits $\mathcal{O}_d$ for $d\in\mathcal{D}_n$. Let $d,
d'\in\mathcal{D}_n$, and let us write $d'\leqD d$ if
$\overline{\mathcal{O}_{d'}}\subset \overline{\mathcal{O}_d}$. Then
we have $d'\leqD d$
if and only if
$q_{k,\ell}^d\leq q_{k,\ell}^{d'}$ for all $k\in\{0,1,\ldots,n\}$ and
all $\ell\in\{1,\ldots,n\}$.
\end{theorem}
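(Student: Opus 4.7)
The plan is to follow the quiver representation-theoretic approach of the cited paper. A pair $(e, F_\bullet)$ consisting of a 2-nilpotent endomorphism of $\K^n$ together with a complete flag can be viewed as a graded representation of the algebra $\K[x]/(x^2)$, so that $B$-orbits on the nilpotent orbit $G \cdot e$ correspond to isomorphism classes of such filtered representations. The oriented link pattern records the combinatorial type of this representation: the arrow $i \to j$ encodes that $e$ sends a vector of ``level $i$'' (modulo smaller parts of the flag) onto a vector of ``level $j$''.

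First, for the parametrization statement, one constructs the explicit representatives $M_d$ and shows that every 2-nilpotent matrix is $B$-conjugate to a unique $M_d$. Existence is an iterative Gaussian elimination respecting the flag: using elementary column operations from $B$, one clears all off-pivot entries of $e$, leaving in each column at most one nonzero entry normalized to $1$, which then encodes an arrow $i \to j$. Uniqueness follows because $d$ is recoverable from $B$-invariants of the pair $(e, F_\bullet)$, namely dimensions of the form $\dim(F_k \cap \ker e)$ and $\dim(F_k \cap e(F_\ell))$. In particular, the quantities $q^d_{k,\ell}$ admit such an interpretation as dimensions of natural subspaces attached to $(e, F_\bullet)$, and these rank functions are upper semicontinuous on the variety of 2-nilpotent matrices. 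This semicontinuity principle yields the easy direction of the closure characterization: $\overline{\mathcal{O}_{d'}} \subset \overline{\mathcal{O}_d}$ forces $q^d_{k,\ell} \leq q^{d'}_{k,\ell}$ for all $k, \ell$.

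The main obstacle is the converse direction. The strategy is to induct on $\sum_{k,\ell}(q^{d'}_{k,\ell} - q^d_{k,\ell})$ and, at each step, exhibit an elementary degeneration from $\mathcal{O}_d$ to some intermediate $\mathcal{O}_{d''}$ with $d' \leq_{\mathcal{D}} d'' \leq_{\mathcal{D}} d$ and with at least one $q$-inequality strictly improved. Each elementary step has to be realized concretely as a one-parameter family $t \mapsto g_t \cdot M_d$ in the $B$-orbit whose flat limit as $t \to 0$ lands in $\mathcal{O}_{d''}$. The combinatorial heart of the proof is then a classification of ``minimal moves'' on link patterns (shortening an arrow, exchanging two arrow endpoints, etc.) together with the verification that, whenever the $q$-inequalities hold somewhere strictly, at least one such move is available; this delicate step couples the combinatorics of link patterns to the representation theory of $\K[x]/(x^2)$, and is where the quiver degeneration machinery of Boos--Reineke does the essential work.
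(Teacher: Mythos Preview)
The paper does not prove this theorem: it is quoted verbatim from Boos--Reineke \cite[Theorem~4.3]{BR} and used as a black box. There is therefore no ``paper's own proof'' to compare your proposal against. What the paper \emph{does} prove is the surrounding Lemma~\ref{BP}, which reformulates the $q^d_{k,\ell}$-criterion in terms of the rank function $r(i,j,y)=\dim(y(V_i)+V_j)$; that short argument is just the dimension identity $q^d_{k,\ell}=\dim(V_\ell\cap\ker y)+\dim(y(V_\ell)\cap V_k)$ combined with $\dim y(V_\ell)+\dim(V_\ell\cap\ker y)=\ell$.

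Your sketch is a reasonable outline of how the Boos--Reineke proof actually goes, and the two directions you identify (semicontinuity for necessity, elementary degenerations for sufficiency) are indeed the backbone of their argument. But as written it is only a plan, not a proof: the ``combinatorial heart'' you allude to---classifying the minimal moves and showing that whenever some $q$-inequality is strict one such move is available---is precisely the nontrivial content of \cite[Theorem~4.6]{BR}, and you have not carried it out. If you intend this to stand as an independent proof rather than a summary of the cited one, that step needs to be written in full.
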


For $e$ as in (\ref{e}), the orbit $G\cdot e$ is the set of
2-nilpotent matrices of rank $r$, hence we get
\[G\cdot e=\bigsqcup_{d\in\mathcal{D}_{n,r}}\mathcal{O}_d.\]
Then, to obtain the parametrization of the $B$-orbits on $G\cdot e$
claimed in Theorem \ref{T9.1}, we construct an explicit bijection
between the sets $W(I,J,K)$ and $\mathcal{D}_{n,r}$.

The matrix $e$ of (\ref{e}) is of the form $M_{d_1}$, where $d_1$ is
the oriented link pattern with an arrow from $n-r+i$ to $i$ for all
$i\in\{1,\ldots,r\}$. For every $w\in W=\mathfrak{S}_n$, identifying
$w$ with its permutation matrix $w\in\mathrm{GL}_n(\K)$, the matrix
$w\cdot e$ is also of the form $M_{d_w}$ for a unique oriented link
pattern $d_w\in\mathcal{D}_{n,r}$. Specifically, $d_w$ has an arrow
from $w(n-r+i)$ to $w(i)$ for all $i\in\{1,\ldots,r\}$, in other
words, $d_w$ is the oriented link pattern obtained from $d_1$ by
letting $w$ act on the set of vertices $\{1,\ldots,n\}$ in the
canonical way.

Moreover, if $d_w=d_{w'}$, then $w\cdot e=w'\cdot e$, hence
$w'^{-1}w$ centralizes $e$. In view of (\ref{Z-typeA}), the latter
fact implies that the permutation $w'^{-1}w$ is of the form $yxx^*$
with $y\in W_K$ and $x\in W_I$, that is, $w'^{-1}w$ belongs to the
subgroup $W_KW_{I,J}\subset\mathfrak{S}_n$ with the notation of
Section \ref{cox}. Since $W(I,J,K)$ contains exactly one
representative for each $W_KW_{I,J}$-coset, this yields an injective
map
\[
W(I,J,K)\to \mathcal{D}_{n,r},\ w\mapsto d_w.
\]
Finally, since one has $|\mathcal{D}_{n,r}|=\frac{n!}{r!
(n-2r)!}=|W^{J\cup K}|=|W(I,J,K)|$, the above map is bijective. This
yields the bijection $w\mapsto \mathcal{O}_w:=Bw\cdot
e=\mathcal{O}_{d_w}$ of Theorem \ref{T9.1}.

\begin{remark}
\label{Remark-ZvsB} The set of $B$-orbits on $G\cdot e\cong G/Z$ and
the set of $Z$-orbits on $\mathcal{B}=G/B$ are in bijection via the
map $Bg\cdot e\mapsto Zg^{-1}\cdot B$, and this bijection relates
the parametrization of the former set of orbits given above with the
parametrization of the latter set of orbits given in Example
\ref{E7.3}. Specifically, every element $w\in W(I,J,K)$ can be
written as $w=w_1w_2$ for a unique pair $(w_1,w_2)\in W^{I\cup J\cup
K}\times W_I$, and the $B$-orbit $\mathcal{O}_w=Bw\cdot e$
corresponds to the $Z$-orbit $Zw^{-1}\cdot B$ attached to the pair
$(w_1^{-1},w_2^{-1})\in \lW{P}\times \mathfrak{S}_r$, with the
notation of Example \ref{E7.3}.
\end{remark}

\subsection{Inclusion relations between orbit closures}
\label{section-9.2}

Let $0=V_0\subset V_1\subset \ldots\subset V_n=\mathbb{K}^n$ be the
standard complete flag of $\mathbb{K}^n$, so that $B=\{g\in
G:g(V_i)=V_i,\ \forall i\}$. Given a $2$-nilpotent matrix $y\in
\gl_n(\mathbb{K})$ and
$(i,j)\in\{1,\ldots,n\}\times\{0,1,\ldots,n\}$, we write
$$r(i,j,y):=\dim(y(V_i)+V_j).$$ Note that the mapping $y\mapsto
r(i,j,y)$ is constant on every $B$-orbit.

\begin{lemma}
\label{BP} Let $d,d'\in\mathcal{D}_{n,r}$ and write $w,w'$ for the
corresponding elements in $W(I,J,K)$ in the sense of Section
\ref{section-9.1}, i.e., $d=d_w$ and $d'=d_{w'}$. We have $d'\leqD
d$ (that is, $\overline{\mathcal{O}_{w'}}\subset
\overline{\mathcal{O}_{w}}$; see the notation in Theorem \ref{BR})
if and only if $r(i,j,w'\cdot e)\leq r(i,j,w\cdot e)$ for all $i,j$.
In particular, the orbit closure $\overline{\mathcal{O}_w}$, where
$w\in W(I,J,K)$, is given by
$$\overline{\mathcal{O}_w}=\big\{y\in G\cdot e:r(i,j,y)\leq r(i,j,w\cdot e)\ \ \forall (i,j)\in \{1,\ldots,n\}\times\{0,1,\ldots,n\}\big\}.$$
\end{lemma}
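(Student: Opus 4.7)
The plan is to translate Boos--Reineke's criterion (Theorem \ref{BR}) into the rank criterion via a direct combinatorial identification of $r(i,j,M_d)$ with their statistic $q^d_{k,\ell}$. The skeleton has three steps: semi-continuity (easy direction), the key combinatorial identity, and combining both with Theorem \ref{BR}.

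First I would observe that the function $y\mapsto r(i,j,y)=\dim(y(V_i)+V_j)$ is lower semi-continuous on $\gl_n(\K)$, because it is (up to the additive constant $j$) the rank of the composite morphism $V_i\hookrightarrow\K^n\xrightarrow{y}\K^n\twoheadrightarrow\K^n/V_j$ viewed as a family parametrized by $y$. Hence $\{y:r(i,j,y)\leq c\}$ is closed, and since $r(i,j,\cdot)$ is constant on $B$-orbits, $y'\in\overline{\mathcal{O}_y}$ forces $r(i,j,y')\leq r(i,j,y)$. This yields one direction of both equivalences: if $\overline{\mathcal{O}_{w'}}\subset\overline{\mathcal{O}_w}$, then the rank inequalities hold, and $\overline{\mathcal{O}_w}\subset\{y\in G\cdot e:r(i,j,y)\leq r(i,j,w\cdot e)\ \forall (i,j)\}$.

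The main step is to compute $r(i,j,M_d)$ in terms of the data of $d$. By definition of $M_d$, one has $M_d(V_i)=\mathrm{span}\{\varepsilon_b:\text{some arrow }a\to b\text{ in }d\text{ with }a\leq i\}$, so a short calculation gives
\[
r(i,j,M_d)=j+\bigl|\{\text{arrows with source}\leq i\text{ and target}>j\}\bigr|.
\]
On the other hand, since every vertex of $d$ is incident with at most one arrow, each vertex $\leq i$ is either non-incident, a source, or a target, which gives $p^d_i=i-|\{\text{arrows with source}\leq i\}|$ and then
\[
q^d_{j,i}=p^d_i+|\{\text{arrows with source}\leq i,\text{ target}\leq j\}|=i-|\{\text{arrows with source}\leq i,\text{ target}>j\}|.
\]
Combining, $r(i,j,M_d)=i+j-q^d_{j,i}$. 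In particular, the rank inequalities $r(i,j,M_{d'})\leq r(i,j,M_d)$ for all $(i,j)\in\{1,\ldots,n\}\times\{0,1,\ldots,n\}$ are equivalent to $q^d_{j,i}\leq q^{d'}_{j,i}$ for all such pairs.

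Since $w\cdot e=M_{d_w}$ by construction, the last displayed equivalence, combined with Theorem \ref{BR}, gives the stated equivalence between $d'\leq_\mathcal{D}d$ and the rank inequalities. For the final assertion on $\overline{\mathcal{O}_w}$, the inclusion ``$\subset$'' is the semi-continuity statement from the first paragraph; for ``$\supset$'', any $y\in G\cdot e$ satisfying the rank inequalities lies in some $\mathcal{O}_{w'}$, and then the rank inequalities translate to $d_{w'}\leq_\mathcal{D}d_w$, i.e.\ $\mathcal{O}_{w'}\subset\overline{\mathcal{O}_w}$. The only real obstacle is the bookkeeping in the combinatorial identity $r(i,j,M_d)=i+j-q^d_{j,i}$; the rest is formal once this identity is in place.
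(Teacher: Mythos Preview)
Your proof is correct and follows essentially the same route as the paper's: both establish the affine identity $r(\ell,k,M_d)=\ell+k-q^d_{k,\ell}$ and then invoke Theorem~\ref{BR}. The paper derives the identity by interpreting $q^d_{k,\ell}$ as $\dim(V_\ell\cap\ker y)+\dim(y(V_\ell)\cap V_k)$ and applying rank--nullity, whereas you count arrows directly; and the paper leaves the orbit-closure description as an immediate consequence while you spell out the semi-continuity argument---but these are presentational differences, not substantive ones.
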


The description of $\overline{\mathcal{O}_w}$ given in Lemma
\ref{BP} already appears in \cite[Lemma 7.3.5]{BP}. However, while we
understand the description of the $B$-orbit given in the proof of
\cite[Lemma 7.3.5]{BP}, we do not see how to deduce the given
description of the closure. This proof also contains a reference to
Rothbach's thesis \cite{ROTH}, but for the sake of completeness we
prove Lemma \ref{BP} by using Theorem~\ref{BR}:

\begin{proof}
Let $y=w\cdot e$ and $y'=w'\cdot e$. Assume that $d'\leqD d$.
By Theorem~\ref{BR} we have $q_{k,\ell}^d\leq q_{k,\ell}^{d'}$ for
all $k,\ell$. But $q_{k,\ell}^d$ is simply $\dim
(V_\ell\cap\ker(y))+\dim (y(V_\ell)\cap V_k)$, hence the inequality
gives
\begin{eqnarray*}
&&\dim (V_\ell\cap\ker(y)) + \dim y(V_\ell) - \dim (V_k+y(V_\ell)) \\
&\leq& \dim (V_\ell\cap\ker(y')) + \dim y'(V_\ell) -
\dim(V_k+y'(V_\ell)).
\end{eqnarray*}
But $\dim y(V_\ell) + \dim (V_\ell \cap \ker(y))=\ell$, hence the
inequality can be rewritten as $$\dim(V_k+y'(V_{\ell}))\leq
\dim(V_k+y(V_{\ell})),$$ which implies that $r(\ell,k,y')\leq
r(\ell,k,y)$ for all $k\in\{0,\ldots,n\}$, $\ell\in\{1,\ldots,n\}$.

Conversely, assuming that $r(\ell,k,y')\leq r(\ell,k,y)$ for all
$k\in\{0,\ldots,n\}$, $\ell\in\{1,\ldots,n\}$, we can establish
$q_{k,\ell}^d\leq q_{k,\ell}^{d'}$ by the same inequalities as
above, going the other way around.

Hence we have $d'\leqD d$ if and only if $r(\ell,k,y')\leq r(\ell,k,y)$ for all $k\in\{0,\ldots,n\}$, $\ell\in\{1,\ldots,n\}$. The claimed description of orbit closures ensues.
\end{proof}

We now give an alternative combinatorial criterion to determine
whether an orbit is included in the closure of another orbit.

\begin{notation}
Let $e$ be as in (\ref{e}) above. Let $w\in \mathfrak{S}_n$ and let
$e_w=w\cdot e=wew^{-1}$. Associate a sequence $S_w$ of integers to
$w\in \mathfrak{S}_n$ as follows: the $i$-th number in the sequence
is $j$ if $e_w(\varepsilon_i)=\varepsilon_j$ and $0$ if
$e_w(\varepsilon_i)=0$. Hence, for all $i=n-r+1, \dots, n$, this
sequence has the number $w(i-(n-r))$ in position $w(i)$, and zero
everywhere else. In particular the sequence has $r$ nonzero entries,
all distinct.
\end{notation}

\begin{example}\label{ex:seq}
For $n=4$ and $r=2$, let $w=s_2 s_1 s_3 s_2=(1,3)(2,4)$. Then
$S_w=(3\,4\,0\,0)$.
\end{example}

Note that if $S_w$ has $k$ as nonzero entry, then $S_w$ has $0$ as
$k$-th entry. In the following statement, for every
$i\in\{1,\ldots,n\}$, we denote by $S_w^i$ the truncated sequence
formed by the nonzero entries which are within the first $i$ entries
of $S_w$.

\begin{lemma}\label{comb}
Let $w,w'\in W(I,J,K)$. and let $d_w, d_{w'}$ be the corresponding
oriented link patterns.
We have $d_{w'}\leqD d_w$ if and only if 
$$|\{k\in S_{w'}^i:k> j\}| \leq |\{k\in S_{w}^i:k> j\}|
\quad\mbox{for all $i\in\{1,\ldots,n\}$, all $j\geq 0$}.$$
\end{lemma}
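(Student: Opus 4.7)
The plan is to apply Theorem \ref{BR} after rewriting the quantities $q_{k,\ell}^{d_w}$ in terms of the sequence $S_w$. The first step is to unwind the definition of $d_w$. Since $e(\varepsilon_{n-r+i}) = \varepsilon_i$ for $i=1,\ldots,r$, the matrix $e_w = wew^{-1}$ satisfies $e_w(\varepsilon_{w(n-r+i)}) = \varepsilon_{w(i)}$, so $d_w$ has exactly $r$ arrows, namely one from $w(n-r+i)$ to $w(i)$ for each $i \in \{1,\ldots,r\}$. In particular the positions carrying a nonzero entry in $S_w$ are the sources $\{w(n-r+1),\ldots,w(n)\}$, and the truncated sequence can be described as
\[
S_w^i = \{w(j) : j \in \{1,\ldots,r\},\ w(n-r+j) \leq i\},
\]
i.e.\ the multiset (in fact, set, by injectivity of $w$) of targets of those arrows whose source is $\leq i$.

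Next I would compute $p_\ell^{d_w}$ and $q_{k,\ell}^{d_w}$ from this description. Writing $A^\ell$ for the number of sources $\leq \ell$ and $B^\ell$ for the number of targets $\leq \ell$, the number of vertices $\leq \ell$ incident with no arrow is $\ell - A^\ell - B^\ell$, hence
\[
p_\ell^{d_w} = (\ell - A^\ell - B^\ell) + B^\ell = \ell - A^\ell = \ell - |S_w^\ell|.
\]
The additional term entering $q_{k,\ell}^{d_w}$ is the number of arrows with source $\leq \ell$ and target $\leq k$, which is $|\{v \in S_w^\ell : v \leq k\}|$. Therefore
\[
q_{k,\ell}^{d_w} = \ell - |S_w^\ell| + |\{v \in S_w^\ell : v \leq k\}| = \ell - |\{v \in S_w^\ell : v > k\}|.
\]

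Finally I would conclude. By Theorem \ref{BR}, $d_{w'} \leqD d_w$ is equivalent to $q_{k,\ell}^{d_w} \leq q_{k,\ell}^{d_{w'}}$ for all $k\in\{0,\ldots,n\}$ and $\ell \in \{1,\ldots,n\}$. Substituting the formula above, this becomes
\[
|\{v \in S_{w'}^\ell : v > k\}| \leq |\{v \in S_w^\ell : v > k\}|
\]
for all such $k,\ell$, which is exactly the condition in the statement (renaming $i=\ell$, $j=k$; values $j \geq n$ give trivial inequalities $0 \leq 0$, so imposing $j \geq 0$ is equivalent to imposing $j \in \{0,\ldots,n\}$). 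The main step is bookkeeping rather than a genuine obstacle: one only has to be careful that sources and targets are disjoint (which follows because $r \leq n/2$ combined with injectivity of $w$, or directly from the fact that $e^2 = 0$ forces $\mathrm{Im}(e_w) \subset \ker(e_w)$), so that the count $\ell - A^\ell - B^\ell$ of unincident vertices $\leq \ell$ is correct.
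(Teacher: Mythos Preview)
Your proof is correct and takes essentially the same approach as the paper: both amount to rewriting the Boos--Reineke criterion (Theorem~\ref{BR}) in terms of the sequences $S_w^i$. The only cosmetic difference is that the paper routes through Lemma~\ref{BP} (the rank condition $r(i,j,y)=\dim(e_w(V_i)+V_j)$, itself derived from Theorem~\ref{BR}) and then observes $r(i,j,w\cdot e)=j+|\{k\in S_w^i:k>j\}|$, whereas you compute $q_{k,\ell}^{d_w}=\ell-|\{v\in S_w^\ell:v>k\}|$ directly; the two computations are equivalent and your path is marginally shorter.
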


\begin{proof}
This follows immediately from Lemma~\ref{BP} and from the fact that 
\begin{eqnarray*}
r(i,j,w\cdot e) & = & \dim e_w(V_i)+\dim V_j-\dim e(V_i)\cap V_j \\
 & = & |S_w^i|+j- |\{k\in S_w^i:1\leq k\leq j\}|.
\end{eqnarray*}

\end{proof}

\begin{example}
Let $n=4$ and $r=2$, so that $I=\{s_1\}$, $J=\{s_3\}$,
$K=\emptyset$.

(a) Let $w=s_2 s_1 s_3 s_2=(1,3)(2,4)$ be as in Example~\ref{ex:seq}
and let $w'=s_2 s_1=(1,3,2)$. Both elements lie in $W(I,J,K)$. We
have $S_w=(3\,4\,0\,0)$ and $S_{w'}=(0\,3\,0\,1)$, hence
$$S_w^1=(3),\  S_w^2=(3\,4)=S_w^3=S_w^4 \quad\mbox{and}\quad
S_{w'}^1=\emptyset,\ S_{w'}^2=(3)=S_{w'}^3,\ S_{w'}^4=(3\,1).$$
Hence $d_{w'}\leqD d_w$, that is,
$\overline{\mathcal{O}_{w'}}\subset \overline{\mathcal{O}_{w}}$.

(b) 
The sequences $S_w$ associated to the various elements $w\in
W(I,J,K)$ and the corresponding inclusion relations between orbit
closures are described in Figure~\ref{fig:1}.
\begin{figure}[htbp]
\begin{center}
\begin{tikzpicture}
\draw (1.9,0.2) -- (1,1.35); \draw (2.1,0.2) -- (3,1.35); \draw
(0.9,1.7) -- (0,2.85); \draw (0,3.2) -- (0,4.35); \draw (0,4.7) --
(0.9,5.85); \draw (1.1,6.2) -- (1.9,7.35); \draw (3.1,1.7) --
(4,2.85); \draw (4,3.2) -- (4,4.35); \draw (4,4.7) -- (3.1,5.85);
\draw (2.9,6.2) -- (2.1,7.35); \draw (1, 1.7) -- (1.9,2.85); \draw
(1.1,1.7) -- (3.9,2.85); \draw (3, 1.7) -- (2.1,2.85); \draw
(2.9,1.7) -- (0.1,2.85); \draw (0.1,3.2) -- (3.9,4.35); \draw
(1.9,3.2) -- (0.1,4.35); \draw (2,3.2) -- (2,4.35); \draw (3.9,3.2)
-- (2.1,4.35); \draw (0.1,4.7) -- (2.9,5.85); \draw (1.9,4.7) --
(1,5.85); \draw (3.9,4.7) -- (1.1,5.85); \draw (2.1,4.7) --
(3,5.85);
\draw (2,0) node{\footnotesize $(0\,0\,1\,2)$}; \draw (1,1.5)
node{\footnotesize $(0\,1\,0\,3)$}; \draw (3,1.5) node{\footnotesize
$(0\,0\,2\,1)$}; \draw (0,3) node{\footnotesize $(2\,0\,0\,3)$};
\draw (2,3) node{\footnotesize $(0\,1\,4\,0)$}; \draw (4,3)
node{\footnotesize $(0\,3\,0\,1)$}; \draw (0,4.5) node{\footnotesize
$(2\,0\,4\,0)$}; \draw (2,4.5) node{\footnotesize $(0\,4\,1\,0)$};
\draw (4,4.5) node{\footnotesize $(3\,0\,0\,2)$}; \draw (1,6)
node{\footnotesize $(3\,4\,0\,0)$}; \draw (3,6) node{\footnotesize
$(4\,0\,2\,0)$}; \draw (2,7.5) node {\footnotesize $(4\,3\,0\,0)$};
\end{tikzpicture}
\end{center}
\caption{The sequences $S_w$ and the inclusion relations between
orbit closures in type $A_3$ for $r=2$. Here $I=\{s_1\}$,
$J=\{s_3\}$, $K=\emptyset$.} \label{fig:1}
\end{figure}
\end{example}

We aim to prove the claim on inclusion of orbit closures in Theorem
\ref{T9.1} by using the criterion from Lemma \ref{comb}. We will use
extensively the \textit{tableau criterion} for the strong Bruhat
order on the symmetric group (see for instance~\cite{BjBr}): if
$x\in \mathfrak{S}_n$, we write $x=x_1\, x_2\,\cdots\, x_n$ if
$x(i)=x_i$. This is called the \textit{line notation} of $x$. Then
given $x,y\in \mathfrak{S}_n$, we have $x\leq y$ (here $\leq$
denotes the strong Bruhat order) if and only if whenever $1\leq k
\leq i$, we have $x_{k,i}\leq y_{k,i}$, where $x_{k,i}$ is the
$k$-th entry of the sequence obtained from $x_1 x_2\cdots x_i$ by
reordering the entries increasingly.

Let $w, w'\in W(I,J,K)$. We consider the partial order
$\leq_\mathcal{O}$ of Proposition \ref{P-order_abstract}. Recall
that, by definition, the relation $w' \leq_{\mathcal{O}} w$ holds if there is $u\in
[w']=w' W_K W_{I,J}$ such that $u \leq w$. Note that this is not
equivalent to having $w'\leq w$ as there might be several elements
of minimal length in the coset $[w']$ (see Section~\ref{cox}).

The proof of Theorem \ref{T9.1} is complete once we show:

\begin{proposition}\label{prop_equiv}
We have $w' \leq_{\mathcal{O}} w$ if and only if $d_{w'} \leqD d_w$.
\end{proposition}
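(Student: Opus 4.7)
The plan is to prove the two implications combinatorially, relating the strong Bruhat order on $\mathfrak{S}_n$ to the rank inequalities of Lemma~\ref{comb}. Two preparatory observations are in order. First, for every $u \in [w'] = w' W_K W_{I,J}$ one has $d_u = d_{w'}$: the $W_K$-action permutes the positions $\{r+1,\ldots,n-r\}$ (which carry only the isolated vertices of the link pattern), while the diagonal $W_{I,J}$-action via $x \mapsto xx^*$ simultaneously permutes the target positions $\{1,\ldots,r\}$ and the source positions $\{n-r+1,\ldots,n\}$ by the same permutation $x$, merely reindexing the $r$ arrows. Second, a direct computation of $\dim((v \cdot e)(V_i) + V_j)$ yields, for every $v \in \mathfrak{S}_n$,
\[
r(i,j,v \cdot e) \;=\; j + N(v;i,j), \qquad N(v;i,j) := |\{m \in \{1,\ldots,r\} : v(n-r+m) \leq i,\ v(m) > j\}|,
\]
and $N$ is an invariant of the coset $[v]$. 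Combining with Lemma~\ref{BP} and Lemma~\ref{comb}, the proposition reduces to the following equivalence: there exists $u \in [w']$ with $u \leq w$ in Bruhat order if and only if $N(w';i,j) \leq N(w;i,j)$ for all $i,j$.

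For the direction $(\Rightarrow)$, given $u \in [w']$ with $u \leq w$, the invariance $N(u) = N(w')$ reduces the task to showing: if $u \leq w$ in Bruhat with $w \in W(I,J,K)$, then $N(u;i,j) \leq N(w;i,j)$ for all $i,j$. I would argue by induction on $\ell(w) - \ell(u)$, reducing to a Bruhat cover $v \leqdot w = v \cdot (a,b)$ with $a < b$ and $v(a) < v(b)$. The hypothesis $w \in W(I,J,K)$---namely, $w(r+1) < \cdots < w(n-r)$ and $w(n-r+1) < \cdots < w(n)$---immediately excludes covers with both $a,b$ in the middle block $\{r+1,\ldots,n-r\}$ or both in the source block $\{n-r+1,\ldots,n\}$, as these would yield $w(a) > w(b)$ in contradiction to the defining ordering. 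For each of the remaining cases---both indices in $\{1,\ldots,r\}$, or $a,b$ in different blocks---a case-by-case bookkeeping verifies the cover-level inequality; for instance when $a, b \in \{1, \ldots, r\}$, the sources $v(n-r+a) < v(n-r+b)$ are still ordered (being inherited from $w$), which forces the net contribution of the two affected arrows under the swap to be nonnegative. To deal with intermediate elements of the chain not lying in $W(I,J,K)$, one selects the chain carefully---using Lemma~\ref{minimal_below} to pick a minimal-length representative $u \in \mathrm{Min}(w')$ with $u \leq w$---so that each upper endpoint still inherits the source-ordering properties required by the cover analysis.

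For the converse $(\Leftarrow)$, starting from $N(w';i,j) \leq N(w;i,j)$ one builds $u \in [w']$ with $u \leq w$ explicitly. The rank inequalities amount to a matrix-domination of the arrows of $w'$ by those of $w$: for every threshold $(i,j)$, the arrows of $w'$ with source $\leq i$ and target $> j$ are outnumbered by the corresponding arrows of $w$. By a Hall-type marriage argument, this yields a bijection between the two sets of $r$ arrows matching each arrow $(s,t)$ of $w'$ with an arrow $(s',t')$ of $w$ dominating it in the appropriate sense. Lifting this matching to a permutation $\sigma \in W_I$ acting via $\sigma\sigma^* \in W_{I,J}$, together with a permutation $\pi \in W_K$ chosen so as to reorder the isolated vertices into alignment with those of $w$, produces the element $u := w' \pi (\sigma\sigma^*) \in [w']$; the tableau criterion for $u \leq w$ can then be verified column-by-column from the matching data.

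The principal obstacle is the direction $(\Rightarrow)$: small explicit examples in $\mathfrak{S}_4$ show that the unrestricted implication ``$u \leq w$ in Bruhat implies $d_u \leqD d_w$'' fails without the assumption $w \in W(I,J,K)$, so a naive induction over arbitrary Bruhat chains is insufficient. The crux is to exploit the source and middle-block orderings imposed by $W(I,J,K)$ throughout the chain of covers, by selecting the chain itself to respect or inherit these orderings; the converse direction, once the Hall matching is in place, reduces to a routine verification.
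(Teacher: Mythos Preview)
Your forward direction has a genuine gap. The induction on $\ell(w)-\ell(u)$ requires, at each intermediate step, that the upper element of the cover lie in $W(I,J,K)$ (or at least have increasingly ordered sources $w(n-r+1)<\cdots<w(n)$), but once you descend from $w$ to some $v\leqdot w$ this need not hold---as you yourself acknowledge. Choosing $u\in\mathrm{Min}(w')$ via Lemma~\ref{minimal_below} controls only the bottom of the chain, not the intermediate elements; nothing in your argument produces a Bruhat chain from $u$ to $w$ whose every term enjoys the required source ordering, so the cover-level case analysis does not chain. The paper avoids this entirely by a one-shot argument: since $w\in W(I,J,K)$, one has $S_w^i=(w_1,\ldots,w_\ell)$ for the appropriate $\ell$; then from $u\leq w$ alone (no hypothesis on $u$ beyond $u\in[w']$) the tableau criterion forces $u_m>i$ for all $m>n-r+\ell$, so the entries of $S_{w'}^i=S_u^i$ lie among $u_1,\ldots,u_\ell$, and one more application of the tableau criterion on the first $\ell$ positions gives the inequality of Lemma~\ref{comb} directly---no induction, no chain.

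For the converse, your Hall-marriage sketch is plausible but left unverified. The paper takes a different route: it reduces to the case where $d_w$ covers $d_{w'}$ for $\leqD$, invokes the classification of such covers in \cite[Theorem~4.6]{BR}, and for each of the three cover types listed there exhibits an explicit transposition $(a',b')$ with $w^{-1}(a')>w^{-1}(b')$ such that $u=(a',b')\circ w\in[w']$ and $u\leq w$.
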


\begin{proof}
Assume that $w'\leq_{\mathcal{O}} w$. For showing that $d_{w'}\leqD
d_w$, fix $i\in\{1,\ldots,n\}$ and $j\geq 0$, and let us check the
condition of Lemma \ref{comb} on the sequences $S_w^i$ and
$S_{w'}^i$.

\begin{thomas}
La notation $w_1, w_2$ peut preter à confusion avec la Section $8$. Mais la aussi, je ne sais pas s'il y a un reel risque.
\end{thomas}

Since $w=w_1\,w_2\,\cdots\,w_n\in W(I,J,K)=W^{J\cup K}$, we have
that the last $r$ entries in the sequence are increasing. Note that
they give the positions of the nonzero entries in $S_w$, and that
these entries are $w_1, w_2,\dots, w_r$. Since
$w_{n-r+1}<w_{n-r+2}<\cdots<w_n$,
the entries $w_k$ with $k\geq n-r+1$ and $w_k\leq i$ form a
subsequence of the form
$w_{n-r+1}\,w_{n-r+2}\,\cdots\,w_{n-r+\ell}$ for some $\ell$. This implies that the
sequence $S_w^i$ is
\[S_w^i=(w_1\,w_2\,\cdots\,w_\ell).\]

The relation $w'\leq_{\mathcal{O}} w$ means that there is $u\in[w']$
such that $u\leq w$. Since $u$ belongs to the coset
$[w']=w'W_KW_{I,J}$, there is $z\in\mathfrak{S}_r$ such that
$$u(n-r+k)=w'(n-r+z_k)\quad\mbox{and}\quad u(k)=w'(z_k)\ \mbox{ for all $k\in\{1,\ldots,r\}$},$$
and this implies that $S_u=S_{w'}$, hence $S_u^i=S_{w'}^i$.

Since $u\leq w$, we must have $u_m> i$ whenever $m> n-r+\ell$,
otherwise we would have a contradiction with the tableau criterion.
(Indeed, the integers $w_{k,n-r+\ell}$ for $1 \leq k \leq n-r+\ell$
contain all the integers from $1$ to $i$, whereas for the integers
$u_{k,n-r+\ell}$, the integer $u_m$ would be missing.) It follows
that the entries of the sequence $S_u^i$ are
among $u_1,u_2,\ldots,u_{\ell}$.
Since $u\leq w$, it follows from the tableau criterion that
$$|\{ k\in\{u_1, \ldots, u_\ell\}: k> j \}| \leq |\{ k\in\{w_1, \ldots, w_\ell\}: k> j \}|.$$
Since $S_w^i=(w_1\,w_2\,\cdots\,w_\ell)$ and $S_u^i$ has entries
among $u_1,u_2,\ldots,u_\ell$, we get that
$$|\{k\in S_{w'}^i:k> j\}|=|\{k\in S_u^i:k> j\}| \leq |\{k\in S_{w}^i:k> j\}|.$$
The proof of the first implication is complete.

\medskip

To show the converse, we use \cite[Theorem 4.6]{BR}, where the
covering relations in the poset $(\mathcal{D}_n, \leqD)$ are
described. We may and will assume that there is a covering relation
between $d_w$ and $d_{w'}$, and we have to prove that $w' \leq_\cO
w$. According to \cite[Theorem 4.6]{BR}, we have three types of
relations to consider.

For the first type, we assume that for some $1\leq a<b\leq n$, there
is an arrow $a \to b$ in $d_w$ whereas there is an arrow $b \to a$
in $d_{w'}$. We also assume that the other arrows are the same for
$d_w$ and for $d_{w'}$. From the definition of $d_w$, it follows
that there is an integer $i$ such that $1 \leq i \leq r$,
$w_{n-r+i}=a$ and $w_i=b$. Moreover, setting $u=(a,b)\circ w$, we
have $d_u=d_{w'}$, hence $u$ belongs to the coset
$[w']=w'W_KW_{I,J}$ (see Section \ref{section-9.1}). Since $a<b$ and
$w^{-1}(a)>w^{-1}(b)$, we get $u \leq w$, and so $w' \leq_\cO w$.

For the second type, we are given three integers $1\leq a<b<c\leq n$
such that the only difference between $d_w$ and $d_{w'}$ involves
the vertices $a,b,c$ and corresponds to one edge of the second
diagram in \cite[Theorem 4.6]{BR}. In each case, we note that
$d_{w'}$ can be obtained from $d_w$ by switching two vertices
$a'<b'$ among $a,b,c$, hence $d_{w'}=d_{(a',b')\circ w}$, which
yields $u:=(a',b')\circ w\in[w']$. Moreover, in each case, it turns
out that $w^{-1}(a')>w^{-1}(b')$ (where we use that $w^{-1}(i)$ lies
in $\{1,\ldots,r\}$, $\{n-r+1,\ldots,n\}$, or $\{r+1,\ldots,n-r\}$
depending on whether $i$ is end point of an arrow, starting point of
an arrow, or not incident with any arrow in $d_w$). This implies that
$u\leq w$, and therefore $w'\leq_\mathcal{O}w$.

For the third type, there are $1\leq a<b<c<d\leq n$ such that the
only difference between $d_w$ and $d_{w'}$ involves the vertices
$a,b,c,d$, and corresponds to one of the edges of the third diagram
in \cite[Theorem 4.6]{BR}. Specifically, two cases may arise:
\begin{itemize}
    \item[\rm (a)] $d_{w'}$ is obtained from $d_w$ by switching two vertices $a'<b'$ (among $a,b,c,d$) such that $a'$ is the starting point of an arrow in $d_w$ and $b'$ is the end point of an arrow in $d_w$. In that case, we get $d_{w'}=d_{(a',b')\circ w}$ hence $u:=(a',b')\circ w$ belongs to the coset $[w']$, and we have $u\leq w$, because $w^{-1}(a')>n-r\geq r\geq w^{-1}(b')$.
    \item[\rm (b)] $d_w$ has two arrows $a'\to a''$ and $b'\to b''$ with $a'<b'$, $b''<a''$,
    and $d_{w'}$ is obtained from $d_w$ by switching the two starting points $a',b'$; or, equivalently, by switching the two end points $a'',b''$. Then, the elements $u':=(a',b')\circ w$ and $u'':=(a'',b'')\circ w$ both belong to the coset $[w']$.
    Moreover, it follows from the definition of $d_w$ that $w^{-1}(a')=n-r+w^{-1}(a'')$ and
    $w^{-1}(b')=n-r+w^{-1}(b'')$. Hence, we have either $w^{-1}(a')>w^{-1}(b')$ or $w^{-1}(b'')>w^{-1}(a'')$, and thereby $u'\leq w$ or $u''\leq w$.
\end{itemize}
In each case, the coset $[w']$ contains an element $u$ such that
$u\leq w$. Therefore, $w'\leq_\mathcal{O} w$. Note also that the
third diagram in \cite[Theorem 4.6]{BR} is the same as our diagram
in Figure \ref{fig:1} while the covering relations within the
corresponding poset $(W(I,J,K),\leq_\mathcal{O})$ are given in the
diagram of Figure \ref{fig:avant}.
\end{proof}

\newpage

\addtocontents{toc}{\protect\setcounter{tocdepth}{0}}

\part*{References}

\renewcommand\refname{}


\begin{thebibliography}{gz}
\bibitem[BP19]{BP}
M. Bender and N. Perrin, \emph{Singularities of closures of
$B$-conjugacy classes of nilpotent elements of height 2}, Transform.
Groups {\bf 24} (2019), no. 3, 741--768. \MR{3989689}
\bibitem[Bi98]{Bialynicki-Birula} A. Bialynicki-Birula,
\emph{Some theorems on actions of algebraic groups}, Ann. of Math.
(2) {\bf 98} (1973), 480--497. \MR{0366940}
\bibitem[BB96]{BjBr} A.~Bj\"orner and F.~Brenti, \emph{An improved tableau criterion for Bruhat order}, Electron. J. Combin. {\bf 3} (1996), no. 1, Research Paper 22.
\MR{1399399}
\bibitem[BCE18]{Boos-et-al} M. Boos, G. Cerulli Irelli, and F. Esposito,
\emph{Parabolic orbits of 2-nilpotent elements for classical
groups}, preprint 2018. arXiv:1802.06425.
\bibitem[BR12]{BR} M.~Boos and M.~Reineke, \emph{$B$-orbits of $2$-nilpotent matrices and generalizations}. In: Highlights in Lie algebraic methods, pp. 147--166, Progr. Math., vol. 295, Birkh\"auser/Springer, New York, 2012.
\MR{2866850}
\bibitem[BCP10]{Bravi-Cupit-Foutou}
P. Bravi and S. Cupit-Foutou, \emph{Classification of strict
wonderful varieties}, Ann. Inst. Fourier (Grenoble) {\bf 60} (2010),
no. 2, 641--681. \MR{2667789}
\bibitem[Br99]{brion-rational} M. Brion, \emph{Rational smoothness and fixed points of torus actions}, Transform. Groups {\bf 4} (1999), no. 2-3, 127--156. \MR{1712861}
\bibitem[Br01]{Brion} M. Brion, \emph{On orbit closures of spherical subgroups in flag varieties}, Comment. Math. Helv. {\bf 76} (2001), no. 2, 263--299. \MR{1839347}
\bibitem[CMcG93]{Collingwood-McGovern} D. H. Collingwood and W. M. McGovern, \emph{Nilpotent orbits in semisimple Lie algebras}, van
Nostrand Reinhold, New York, 1993. \MR{1251060}
\bibitem[Dy91]{Dyer} M. J. Dyer, \emph{On the Bruhat graph of a Coxeter system}, Compositio Math. {\bf 78} (1991), no. 2, 185--191.
\MR{1104786}
\bibitem[De77]{Deodhar} V. V. Deodhar, \emph{Some characterizations of Bruhat ordering on a Coxeter group and determination of the relative Möbius function}, Invent. Math. {\bf 39} (1977), no. 2, 187--198. \MR{0435249}
\bibitem[Hu75]{humphreys} J. E. Humphreys, \emph{Linear algebraic groups}, Graduate Texts in Mathematics, vol. 21, Springer-Verlag, New York-Heidelberg, 1975.
\MR{0396773}
\bibitem[GMFP17]{gandini_isotropy} J. Gandini, P. M\"oseneder Frajria, P. Papi,
\emph{Spherical nilpotent orbits and abelian subalgebras in isotropy
representations}, J. Lond. Math. Soc. (2) {\bf 95} (2017), no. 1,
323--352. \MR{3653095}
\bibitem[GFP19]{gandini} J. Gandini, P. M\"oseneder Frajria, P. Papi, \emph{Nilpotent orbits of height $2$ and involutions in the affine Weyl group}, preprint 2019. arXiv:1908.01337
\bibitem[Kac83]{kac}  V. G. Kac, \emph{Infinite-dimensional Lie algebras. An introduction.} Progress in Mathematics, vol. 44, Birkh\"auser Boston, Inc., Boston, MA, 1983.
\MR{0739850}
\bibitem[Ko12]{kostant} B. Kostant,
\emph{The cascade of orthogonal roots and the coadjoint structure of
the nilradical of a Borel subgroup of a semisimple Lie group}, Mosc.
Math. J. {\bf 12} (2012), no. 3, 605--620. \MR{3024825}
\bibitem[Mu93]{Muhlh} B. M\"uhlherr, \emph{Coxeter groups in Coxeter groups}. In: Finite geometry and combinatorics (Deinze 1992), pp. 277--287, London Math. Soc. Lecture Note Ser., vol. 191, Cambridge Univ. Press, Cambridge, 1993. \MR{1256283}
\bibitem[Pa94]{Panyushev1} D. I. Panyushev, \emph{Complexity and nilpotent orbits},
Manuscripta Math. {\bf 83} (1994), no. 3-4, 223--237. \MR{1277527}
\bibitem[Pa99]{Panyushev2} D. I. Panyushev, \emph{On spherical nilpotent orbits and beyond}, Ann. Inst. Fourier (Grenoble) {\bf 49} (1999), no. 5, 1453--1476.
\MR{1723823}
\bibitem[RS90]{RS1}
R. W. Richardson and T. A. Springer, \emph{The Bruhat order on
symmetric varieties}, Geom. Dedicata {\bf 35} (1990), no. 1-3,
389--436. \MR{1066573}
\bibitem[RS92]{RS2}
R. W. Richardson and T. A. Springer, \emph{Combinatorics and
geometry of $K$-orbits on the flag manifold}. In: Linear algebraic
groups and their representations (Los Angeles, CA, 1992), pp.
109--142, Contemp. Math., vol. {\bf 153}, Amer. Math. Soc.,
Providence, RI, 1993. \MR{1247501}
\bibitem[Ro09]{ROTH} B. D. Rothbach, \emph{Borel Orbits of $X^2 = 0$ in $\mathfrak{gl}(n)$}, Ph.D. Thesis, University of California, Berkeley, 2009. \MR{2714006}
\end{thebibliography}
\end{document}